\theoremstyle{plain}
\newtheorem{theorem}{Theorem}
\newtheorem{corollary}[theorem]{Corollary}
\newtheorem{lemma}[theorem]{Lemma}
\newtheorem{proposition}[theorem]{Proposition}
\theoremstyle{definition}
\newtheorem{remark}[theorem]{Remark}
\newtheorem*{remark*}{Remark}
\newcommand{\R}{\mathbb R}
\newcommand{\pr}{\mathbb P}
\newcommand{\e}{\mathbb E}
\newcommand{\ind}{\mathbbm 1}
\newcommand{\N}{\mathbb N}
\newcommand{\C}{\mathbb C}
\newcommand{\E}{\mathbb E}
\newcounter{fig}
\newcommand{\f}{\refstepcounter{fig} Fig.\ \arabic{fig}. }
\DeclareMathOperator*{\essinf}{ess \, inf}
\DeclareMathOperator*{\intr}{int}
\DeclareMathOperator*{\cl}{cl}
\DeclareMathOperator*{\V}{Var}
\newcommand*\patchAmsMathEnvironmentForLineno[1]{%
  \expandafter\let\csname old#1\expandafter\endcsname\csname #1\endcsname
  \expandafter\let\csname oldend#1\expandafter\endcsname\csname end#1\endcsname
  \renewenvironment{#1}%
     {\linenomath\csname old#1\endcsname}%
     {\csname oldend#1\endcsname\endlinenomath}}%
\newcommand*\patchBothAmsMathEnvironmentsForLineno[1]{%
  \patchAmsMathEnvironmentForLineno{#1}%
  \patchAmsMathEnvironmentForLineno{#1*}}%
\begin{document}
\title[Persistence of AR($1$) sequences with Rademacher innovations]
{Persistence of AR($1$) sequences with Rademacher innovations and linear mod $1$ transforms}
\thanks{V.\ Vysotsky was supported in part by Dr Perry James (Jim) Browne Research Centre. V.\ Wachtel was supported by the Deutsche Forschungsgemeinschaft (DFG, German Research Foundation) - Project-ID 317210226 - SFB 1283.}

\author[Vysotsky]{Vladislav Vysotsky}
\address{University of Sussex,  Pevensey 2 Building, Brighton BN1 9QH, UK}
\email{v.vysotskiy@sussex.ac.uk}

\author[Wachtel]{Vitali Wachtel}
\address{Fakult\"at f\"ur Mathematik, Universit\"at Bielefeld, Postfach 10 01 31, 33501 Bielefeld, Germany}
\email{wachtel@math.uni-bielefeld.de}

\begin{abstract}
We study the probability that an AR(1) Markov chain 
$X_{n+1}=aX_n+\xi_{n+1}$, where $a\in(0,1)$ is a constant, stays non-negative for a long time. We find the exact asymptotics of this probability and the weak limit of $X_n$ conditioned to stay non-negative, assuming that the i.i.d.\ innovations $\xi_n$ take only two values $\pm1$ and $a \le \frac23$. This limiting distribution is quasi-stationary. It has no atoms and is singular with respect to the Lebesgue measure when $\frac12< a \le \frac23$, except for the case $a=\frac23$ and $\pr(\xi_n=1)=\frac12$, where this distribution is uniform on the interval $[0,3]$. This is similar to the properties of Bernoulli convolutions. For $0 < a \le \frac12$, the situation is much simpler, and the limiting distribution is a $\delta$-measure.

To prove these results, we uncover a close connection between $X_n$ killed at exiting $[0, \infty)$ and  the classical dynamical system defined by the piecewise linear mapping $x \mapsto \frac1a x + \frac12 \pmod 1$. Namely, the trajectory of this system started at $X_n$ deterministically recovers the values of the killed chain in reversed time. We use this fact to construct a suitable Banach space, where the transition operator of the killed chain has the compactness properties that allow us to apply a conventional argument of the Perron--Frobenius type.

\end{abstract}


\keywords{autoregressive sequence, $\beta$-expansion, $\beta$-transformation with a hole, Bernoulli convolutions, frequency of digits, persistence, quasi-stationary distribution}
\subjclass{Primary 60J05; Secondary 60G40, 37A44, 37A50}
\maketitle
{\scriptsize
}

\tableofcontents

\section{Introduction and main results}
\subsection{Motivation}
Let $a\in(0,1)$ be a constant and let $\{\xi_n\}_{n\ge1}$ be independent identically distributed random variables. Consider a recursive sequence
\begin{equation}
\label{AR-def}
X_{n+1}=a X_n+\xi_{n+1},\ n\ge0,
\end{equation}
where the starting point $X_0$ is independent of $\{\xi_n\}_{n\ge1}$. This
Markov chain is usually called an {\it autoregressive sequence of order $1$}, or AR($1$) in short. We will denote the distribution of $\{X_n\}_{n\ge0}$ by $\pr_{\nu}$ when $\nu$ is the distribution of $X_0$, or simply by $\pr_x$ when $X_0=x$ is a fixed real number.

We are interested in the asymptotic behaviour of the so-called {\it persistence probability} that the chain remains non-negative for a long time. That is, we want to determine the tail asymptotics for the stopping time
\begin{equation}
\label{tau-def}
\tau:=\inf\{n\ge0: X_n< 0\}.
\end{equation}

The rough logarithmic asymptotics of $\pr_x(\tau>n)$ is known under quite weak restrictions on the distribution of the innovations $\{\xi_n\}$: by Theorem~1 of \cite{HKW19}, if $\mathbb{E}\log(1+|\xi_1|)<\infty$, $\mathbb{E}(\xi_1^+)^\delta<\infty$ for some $\delta>0$, and 
$\mathbb{P}(\xi_1>0)\mathbb{P}(\xi_1<0)>0$, then there exists
a $\lambda_a\in(0,1)$ such that
\begin{align}
\label{lambda-def}
\mathbb{P}_x\bigl(\tau>n\bigr) = \lambda_a^{n+o(n)}
\end{align}
as $n \to \infty$ for any starting point $x>0$. The proof of this result is based on a rather simple subadditivity argument, which allows one to prove existence of $\lambda_a$ but gives no information about any further properties of this exponent.

It is much harder to determine the exact tail asymptotics for $\tau$. To the best of our knowledge, the most general result was 
obtained in \cite{HKW19}: if $0<\mathbb{E}(\xi_1^+)^t<\infty$ for all $t>0$, 
$0<\mathbb{E}(\xi_1^-)^\delta<\infty$ for some $\delta>0$, and the distribution of innovations is {\it absolutely continuous} with a density that is either strictly positive almost everywhere on $\mathbb R$ or has bounded support, then there exists a strictly 
positive function $V(x)$ such that, as $n\to\infty$,
\[
\pr_x(\tau>n)\sim V(x) \lambda_a^n.
\]

The assumption that {\it all} moments of $\xi_1^+$ are finite is optimal, see Proposition 19 in \cite{HKW19}. On the contrary, the assumption on absolute continuity of the distribution of innovations was prompted by the method of the proof, which relied on compactness properties of the transition operator $P$ of the Markov chain $\{X_n\}$  killed at exiting $[0,\infty)$. To clarify, this operator that acts on measurable functions on $[0, \infty)$ by $Pf(x)=\E_x f(X_1) \ind \{ \tau>1\}$, and the {\it killed chain} is the sequence $\{X_n\}_{0 \le n < \tau}$.

It is hard to imagine that the local structure of the distribution of innovations can be crucial for the tail behaviour of $\tau$.
But it is absolutely unclear how to adapt the compactness approach of~\cite{HKW19} to innovations with a generic discrete distribution. In the present paper we do this for the most simple discrete distribution of innovations, given by
\begin{equation}
\label{Rad-assump}
\pr(\xi_1=1)=p, \qquad \pr(\xi_1=-1)=q, \qquad q = 1 - p.
\end{equation}
It is known as the {\it Rademacher distribution} when $p=1/2$.

The key to the compactness approach is in finding a right functional space for the action of the transition operator of the killed chain.
Our choice of the space is prompted by a certain deterministic dynamical system 
defined by the piecewise linear mapping  $x \mapsto \frac1a x + \frac12 \pmod 1$. The details and explanation of the logic leading to this solution are given in Section~\ref{sec:Operator}. 

Initially we attempted a different approach, based on the observation that for every $a \in (0, \frac23)$, $\tau$ is the hitting time of zero for a non-negative Markov  chain $\{Y_n\}$ obtained from $\{X_n\}$ by a certain aggregation of states. This {\it aggregated} (or {\it lumped}) chain has a finite number of states for almost  every $a$. For such $a$, $\lambda_a$ is the Perron--Frobenius eigenvalue of the substochastic matrix corresponding to the transition operator of $\{Y_n\}$ killed at reaching $0$.  This approach is worth the attention even though it does not cover the remaining values of $a$ on the set of measure zero. We give the details in Section~\ref{Sec: lumped}.

Unfortunately, neither of the two approaches works for arbitrary $a> \frac23$ aside some exceptional values (see Remark~\ref{rem: >2/3}). The case $a \in (0, \frac12]$ can be solved by a simple direct argument, which gives that $\lambda_a=p$ for such~$a$. Therefore, our main interest in $a \in (\frac12, \frac23]$.

Let us comment on the background and the related literature besides \cite{HKW19}. The standard Perron--Frobenius method allows one to find the asymptotics of the probability that a finite state Markov chain avoids a subset of its states for a long time. Extending this to an infinite state space requires compactness properties of the transition operator of the corresponding killed chain. For persistence of general Markov chains, this is explored in~\cite{AMZ21}, which gives further references  and considers many examples including autoregressive chains with absolutely continuous innovations. A different approach is used in~\cite{ABRS21}, which gives explicit combinatorial formulas for persistence probabilities of the AR($1$) chain with symmetric uniformly distributed innovations. The case where the innovations have logarithmic tail probabilities is considered in~\cite{DHKW22}. For a general background on persistence problems, which have been extensively studied for many types of stochastic processes, we refer to the surveys \cite{AS15, BMS13}, where the second one gives a theoretical physics perspective.

\subsection{Main results} 
It is easy to see that if the starting point $x$ is not greater than $1/(1-a)$, then all values of the chain $\{X_n\}$ do not exceed $1/(1-a)$. If the starting point $x$ is greater than $1/(1-a)$, then the trajectory of the chain is monotonically decreasing before the downcrossing of the level $1/(1-a)$.
For this reason, it is natural to restrict our consideration to the starting points in the interval $[0, \frac{1}{1-a}]$, which we will {\it always} regard as the state space of the chain $\{X_n\}$ killed at the exit time $\tau$. We stress that our results can be easily generalized to arbitrary starting points $x \ge 0$; see, for example, Corollary~\ref{cor:large-x} below.

Assume that $a \in [\frac12, \frac23)$ and consider the mapping 
\begin{align} \label{eq: T def}
T_a(x)=
\begin{cases}
\frac{1}{a}(x+1), & 0 \le x \le \frac{2a-1}{1-a},\\
\frac{1}{a}(x-1), &1 \le x \le \frac{1}{1-a}.
\end{cases}
\end{align}
We underline that $T_a$ is defined on the set
$[0,\frac{1}{1-a}]\setminus I_a$, where 
$$
I_a=\Big(\frac{2a-1}{1-a},1\Big).
$$
This mapping plays a key role for our paper. In particular, it features in the following remarkable property that the killed
AR($1$) chain is {\it deterministic in the reversed time}: for any $n \ge 1$, on the event $\{\tau>n\}$ we have
\begin{equation} \label{eq: reversed time}
X_{n-k}=T_a^k(X_n), \qquad 0 \le k \le n.
\end{equation}
A similar property is known to hold for the stationary AR($1$) chains in the case when $1/a$ is an integer and the innovations are discrete uniform on $\{0, 1, \ldots, 1/a-1\}$, see the discussion in Bartlett~\cite{Bartlett90}. We will prove these properties in Section~\ref{sec: reversed time}. 

We can also consider the case $a=\frac23$, where we define $T_{2/3}$ as above but specify that $T_{2/3}(1)=0$ because \eqref{eq: T def} gives two values at $x=1$. Note that \eqref{eq: reversed time} ceases to hold for $a=\frac23$: if
$X_n=1$ and $\tau>n$, then we have two possible values for $X_{n-1}$, namely $0$ and $3$.

Since $T_a$ is defined on $[0,\frac{1}{1-a}]\setminus I_a$, the iterates $T_a^k(x)$ are defined only up to the first hitting time of $I_a$, given by 
\begin{equation}
\label{eq:kappa-def}
\varkappa_a(x)=\inf\left\{k\ge0:\, T_a^{k}(x)\in I_a\right\}
\in[0,\infty].
\end{equation}
In particular,  $x$ is a possible value of $X_n$ on the event $\{\tau>n\}$ if and only if $T_a^k(x)\notin I_a$ for all $0 \le k<n$. 
Furthermore, put
\begin{equation}
\label{delta-def}
\delta_k(x)= \ind \{T_a^k(x) <1 \}, \quad 0 \le k < \varkappa_a(x)+1,
\end{equation}
and define the occupation times of $[0,1)$ by
\begin{equation}
\label{L-def}
L_0(x)= 0 \quad \text{and} \quad L_k(x) = \sum_{i=0}^{k-1} \delta_i(x), \quad 1 \le k < \varkappa_a(x)+1.
\end{equation}
We omitted the index $a$ to simplify the notation.

To interpret the quantities $\delta_k(x)$ and $L_k(x)$, we note that the mapping $T_a$ is closely related to the mapping $\widehat T_a$ given by $\widehat T_a(x)=\frac1a x +\frac12 \pmod 1$ for $ 0 \le x \le 1$. Namely, since
\[
\widehat T_a \Big(\frac{ax}{2} \Big) =
\begin{cases}
\frac12 (x +1), & 0 \le x < 1, \\ 
\frac12(x-1), & 1 \le x < 3, \\
\frac12(x-3), & 3 \le x \le \frac2a, \\
\end{cases}
\]
we have $T_a(x) = \frac{2}{a}\widehat T_a(\frac{ax}{2})$ for all $x$ in the domain of $T_a$ excluding $x=3$ when $a=\frac23$. Consequently, for every $k \ge 1$ and $x \neq 3$ such that $\varkappa_a(x) \ge k$, 
\begin{equation} \label{eq: T mod 1}
T_a^k(x) = \frac{2}{a}\widehat T_a^k \Big(\frac{ax}{2} \Big).
\end{equation}
Note in passing that the sequences $ \{\frac2a \widehat T_a^k(\frac{ax}{2})\}_{k \ge 0}$ with $x \in [0, \frac{1}{1-a}]$ can enter the set $[3, \frac2a]$ only through the ``hole'' $I_a$. The related dynamical systems defined by the $\beta$-{\it transformations} $x \mapsto \beta x \pmod 1$ with a hole  were studied in~\cite{Clark16, GS15}.

On the other hand, $\widehat T_a$ is one of the  {\it linear mod~$1$} mappings $\widehat T_{\beta, \alpha}(x)=\beta x + \alpha \pmod 1$, where $\beta >1$ and $\alpha \in [0, 1)$. By  Parry~\cite[p.~101]{Parry64}, every $y \in [0,1)$ can be written  as
\begin{equation} \label{eq: expansion Parry}
y = \sum_{k=0}^\infty \big(d_k(y) - \alpha \big) \frac{1}{\beta^{k+1}},
\end{equation}
where $d_k(y)$ are the ``digits'' of $y$ given by 
$d_k(y) = [ \beta \widehat T_{\beta, \alpha}^k(y) + \alpha]$, with $[ \cdot ]$ denoting the integer part. This is a particular representation of $y$ in the base $\beta$, which we call the $(\beta, \alpha)$-{\it expansion} of $y$. Such expansions were first studied by Parry~\cite{Parry64}. In the particularly important case $\alpha=0$ (and non-integer $\beta$), these are the so-called $\beta$-{\it expansions} introduced by R\'enyi~\cite{Renyi57}. 

It follows from~\eqref{eq: T mod 1} that $1-\delta_k(x)$, for $0 \le k < \varkappa_a(x)+1 $, are the first digits 
in the $(\frac{1}{a}, \frac12) $-expansion of $ax/2$ (unless $x = 3$), and thus $L_k(x)$ is the number of $0$'s in the first $k$ digits of this expansion. Moreover, we will also show that $\delta_0(x), \delta_1(x), \ldots$ for  $a=2/3$ are the digits of the $\frac32$-expansion of $1-x/3$ (up to a minor modification); see~\eqref{eq: digits 2/3} below, where we write $T_{2/3}$ in terms of the $\frac32$-transformation $\widehat T_{3/2, \, 0}$. There are many works on digit frequencies in $\beta$-expansions, including~\cite{BCH16, Schmeling97}; unfortunately, they hardly consider concrete values of $(\beta,x)$.

The orbits of $0$ and $1$ have a distinguished role for the linear mod $1$ transformations; for example, they appear in formula~\eqref{eq: Parry} for the invariant density of  $\widehat T_{\beta, \alpha}$. Similarly, the orbit of zero under $T_a$, that is $\{T_a^k(0): 0 \le k < \varkappa_a(0) +1 \}$, is very important for our analysis. For this reason,  we denote
\begin{equation} \label{eq: zero}
\varkappa_a=\varkappa_a(0), \quad \delta_k=\delta_k(0), \quad L_k=L_k(0).
\end{equation}
It will be crucial whether the orbit of zero is finite or not, so we put
\begin{equation} \label{eq:S def}
S= \{a \in [1/2, 2/3]: \varkappa_a=\infty \}.
\end{equation}
Because $\varkappa_a$ can be infinite due to either chaotic or cyclic behaviour of the orbit, define
\[
\varkappa_a' = \# \{T_a^k(0) : 0 \le k < \varkappa_a+1\} - 1
\]
to distinguish between these cases. If $\varkappa_a'<\infty$ but $\varkappa_a=\infty$, we say that the orbit of zero is {\it eventually periodic} otherwise we call it {\it aperiodic}. Then $\varkappa_a=\varkappa'_a$ if and only if the orbit is aperiodic. We specify that the orbit is {\it purely periodic} when $T_a^{\varkappa_a'}(0)=1$. 

It is easy to see that if $\varkappa_a <\infty$, then the sequence $\{\widehat T_a^k(0)\}_{k \ge \varkappa_a}$ strictly increases until it hits $[3a/2, 1)$ at some moment $k'$, hence $d_{k'}(0)=2$. Then
\[
a \in S \text{ if and only if } \text{there are no $2$'s in the $\Big(\frac{1}{a}, \frac12 \Big)$-expansion of }0.
\]
Similarly, for any fixed $a \in (\frac12, \frac23]$, the set
\begin{equation*} 
Q_a=\{x \in [0, 1/(1-a)]: \varkappa_a(x) = \infty\} 
\end{equation*}
can be described as follows: 
\[
x \in Q_a \text{ if and only if } \text{there are no $2$'s in the $\Big(\frac{1}{a}, \frac12 \Big)$-expansion of }ax/2,
\]
once we re-define the digits of the $(\frac32, \frac12)$-expansion of $1$ as $0111 \ldots$. Thus, $Q_a$ is fully analogous to the Cantor ternary set. Lastly, we note that in the case $a \in (\frac23,1)$, which is excluded from our consideration, the $(\frac{1}{a}, \frac12)$-expansion of any point in $[0,1]$ has no $2$'s.



We can now state our main result.

\begin{theorem} \label{thm: main}
Let $\{X_n\}$ be a Markov chain defined by  equation
\eqref{AR-def} with some $a \in (\frac12, \frac23]$. Assume that the innovations $\{\xi_n\}$ satisfy \eqref{Rad-assump} with some
$p \in (0,1)$.
Then there exists a constant $c \in (0,1)$ such that, uniformly in
$x \in [0,\frac{1}{1-a}]$, we have
\begin{equation} \label{eq: main equiv}
\pr_x(\tau>n)\sim cV(x) \lambda_a^n
\end{equation}
as $n \to \infty$, where $\lambda_a=\lambda_a(p) > p$ is the unique positive solution to
\begin{equation} \label{eq: lambda}
\sum_{k=0}^{\varkappa_a} \delta_k \left( \frac{p}{\lambda} \right)^{k+1} \Big ( \frac{q}{p} \Big)^{L_k} =1
\end{equation}
and
\begin{equation} \label{eq: V}
V(x)= \sum_{k=0}^{\varkappa_a}  \Big( \frac{p}{\lambda_a} \Big)^k 
\Big(\frac{q}{p} \Big)^{L_k}\ind \{ T_a^k(0) \le x \},
\end{equation}
with $\delta_k$ and $L_k$  defined in \eqref{delta-def}, \eqref{L-def}, and \eqref{eq: zero}. The constant $c$ is given in \eqref{eq: const} below.

The function $a \mapsto \lambda_a$ satisfies $\lambda_{1/2}=p$ and is continuous and non-decreasing on $[\frac12, \frac23]$. More specifically, it is constant on every interval contained in $[\frac12, \frac23] \setminus S$ and is constant on no open interval intersecting $S$, which is a closed set of Lebesgue measure zero defined in~\eqref{eq:S def}. In other words, the Lebesgue--Stieltjes measure $d \lambda_a$ on $[\frac12, \frac23]$ has no atoms, is singular, and its topological support is $S$.

Moreover, 
the conditional distributions converge weakly, uniformly in
$x,y \in [0, \frac{1}{1-a}]$:
\begin{equation} \label{eq: weak lim}
\lim_{n \to \infty} \pr_x(X_n \le y \mid \tau>n) = 1- \sum_{k=0}^{\varkappa_a(y)} \delta_k(y)  \Big( \frac{p}{\lambda_a} \Big)^{k+1} \Big( \frac{q}{p} \Big)^{L_k(y)},
\end{equation}
where the right-hand side is the distribution function of a probability measure $\nu_a$ on $[0, \frac{1}{1-a}]$. This measure is quasi-stationary, i.e.
\begin{equation} \label{eq: quasi-st}
\pr_{\nu_a}(X_1\in A|\tau>1)=\nu_a(A),
\quad A\in\mathcal{B}([0,1/(1-a)]).
\end{equation}
This measure has no atoms and is singular with respect to the Lebesgue measure, except in the case $a=\frac23$ and $p=\frac12$, where $\nu_{2/3}$ is the uniform distribution on $[0,3]$. The topological support of $\nu_a$ is the set of non-isolated points of $Q_a$, that is, $Q_a$ itself if there is no integer $k \ge 1$ such that $T^k_a(0) = \frac{1}{1-a}$, and otherwise $Q_a \setminus \cup_{k=0}^\infty T_a^{-k}(0)$.

\end{theorem}

\begin{corollary}\label{cor:large-x}
Under the assumptions of Theorem~\ref{thm: main}, for every $x \ge 0$, we have
\[
\pr_x(\tau>n)\sim
c\lambda_a^n \e_x[\lambda_a^{-\sigma} V(X_\sigma)]
\]
where 
\[
\sigma:=\inf \left\{n \ge 0: X_n \le 1/(1-a) \right\}.
\]
Moreover, the weak convergence \eqref{eq: weak lim} holds true for all $x \ge 0$.
\end{corollary}


\begin{center}
	\parbox{0.49\textwidth}{
    \begin{center}
       \includegraphics[width=\linewidth]{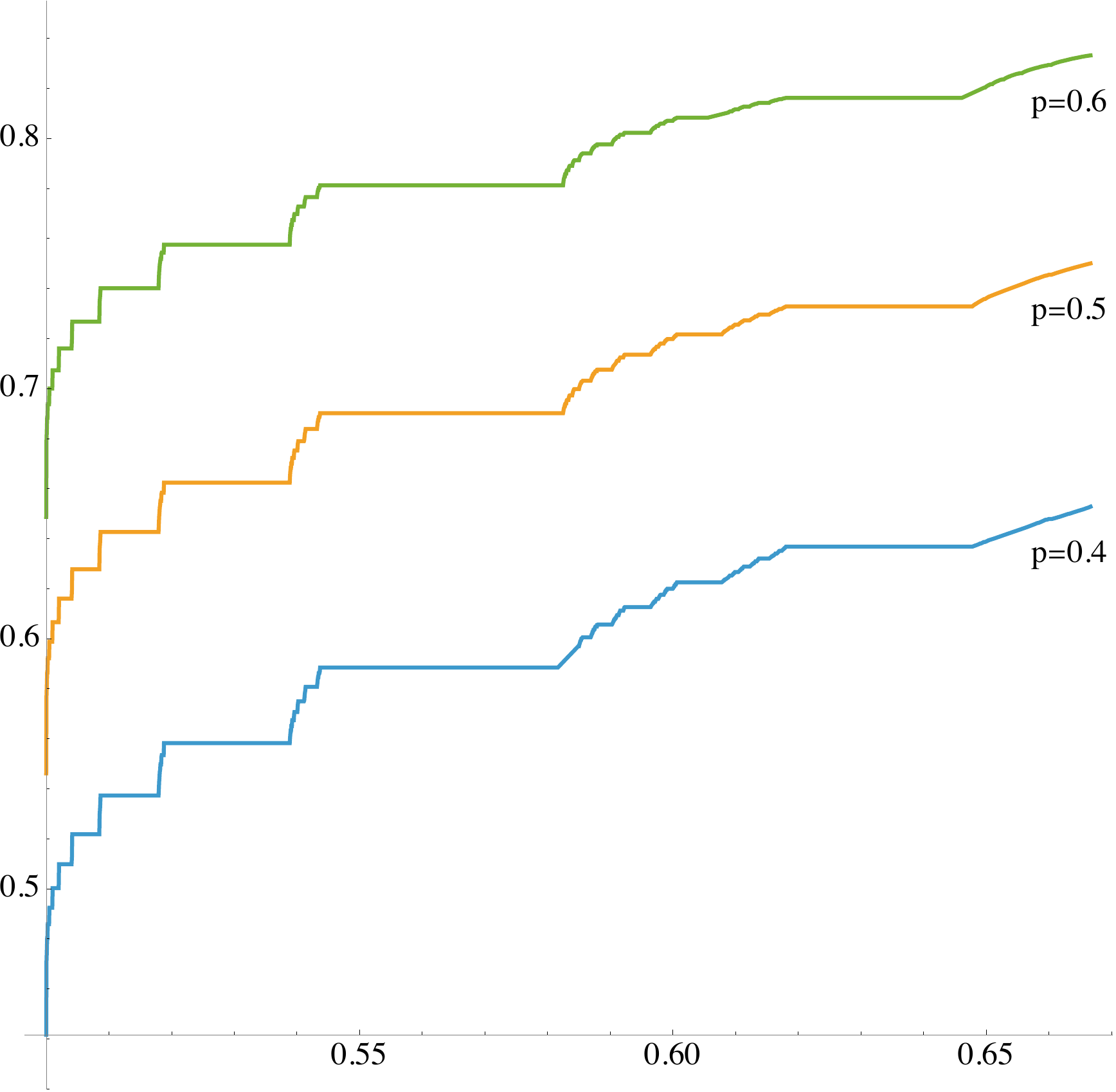} 
       \f \label{fig: lambda} The graphs of $\lambda_a(p)$ for fixed $p$.
     \end{center} }  
    	\hskip 0.2cm     
      	\parbox{0.49\textwidth}{
          \begin{center}
            \includegraphics[width=\linewidth]{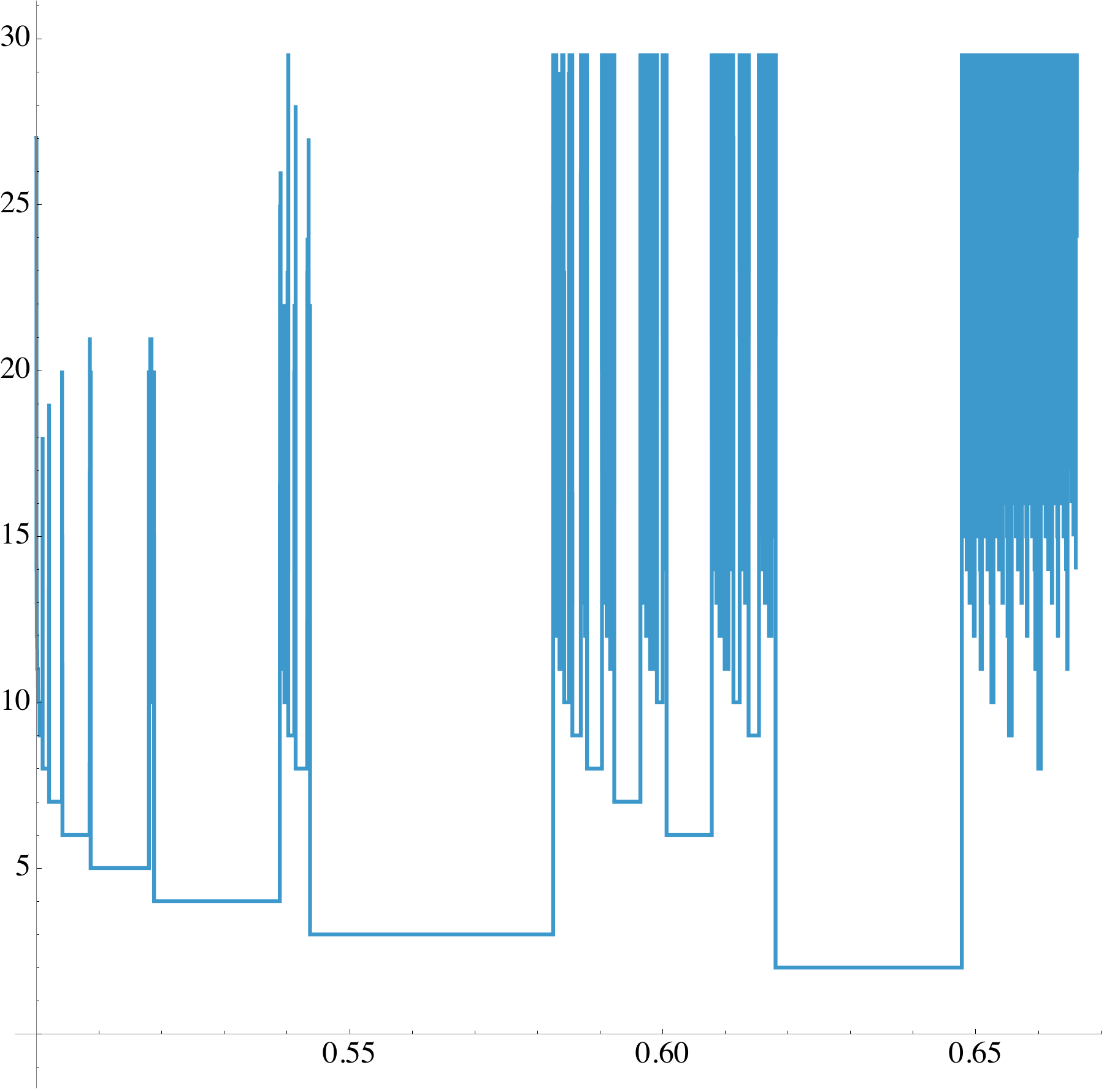} 
             \f \label{fig: kappa} The graph of $\varkappa_a'$.
         \end{center} }  
\end{center}

\begin{center}
	\parbox{0.49\textwidth}{
	\begin{center}
      \includegraphics[width=\linewidth]{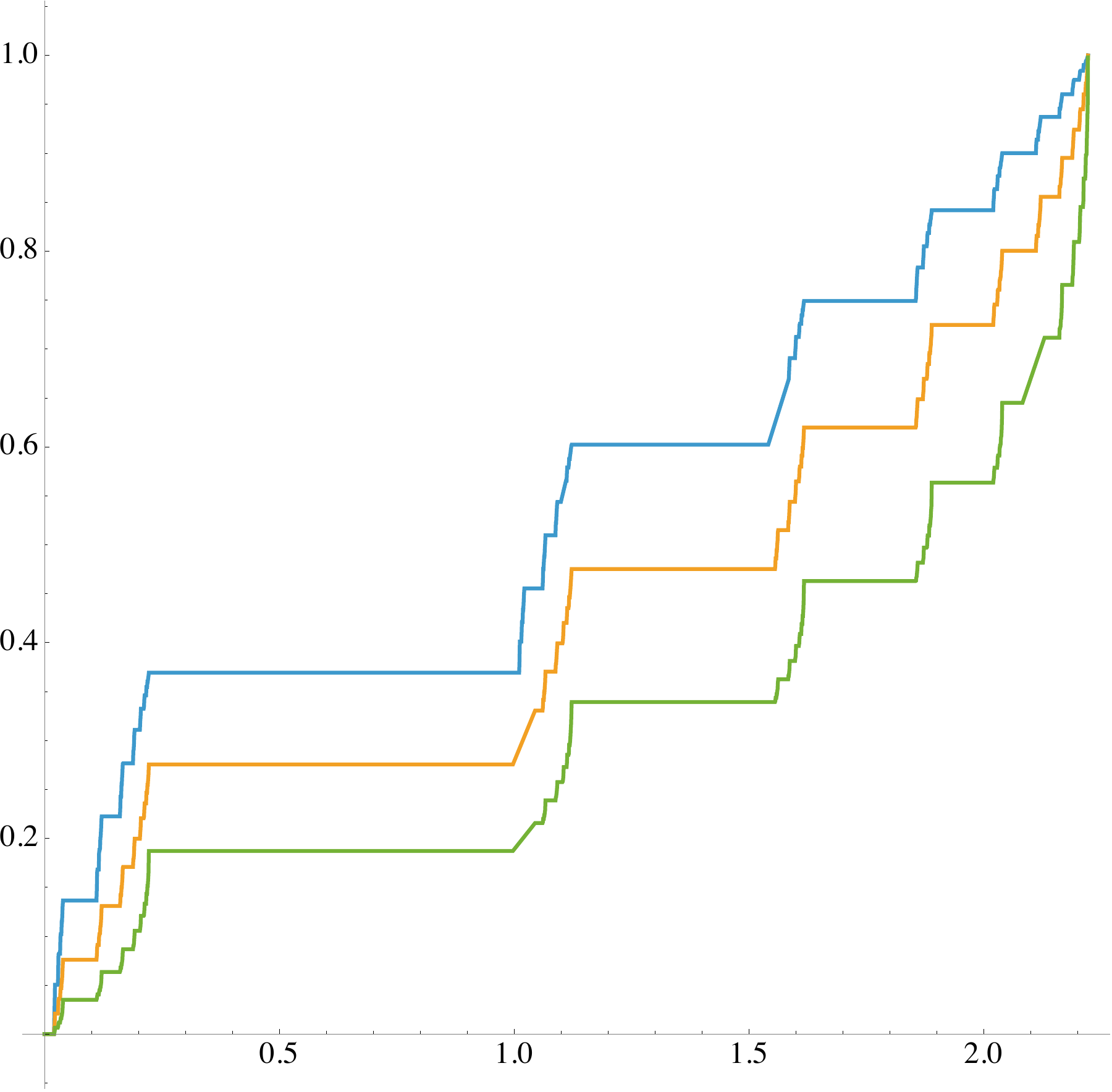} 
      a) $a=0.55$
    \end{center} }  
    \parbox{0.49\textwidth}{
	\begin{center}
      \includegraphics[width=\linewidth]{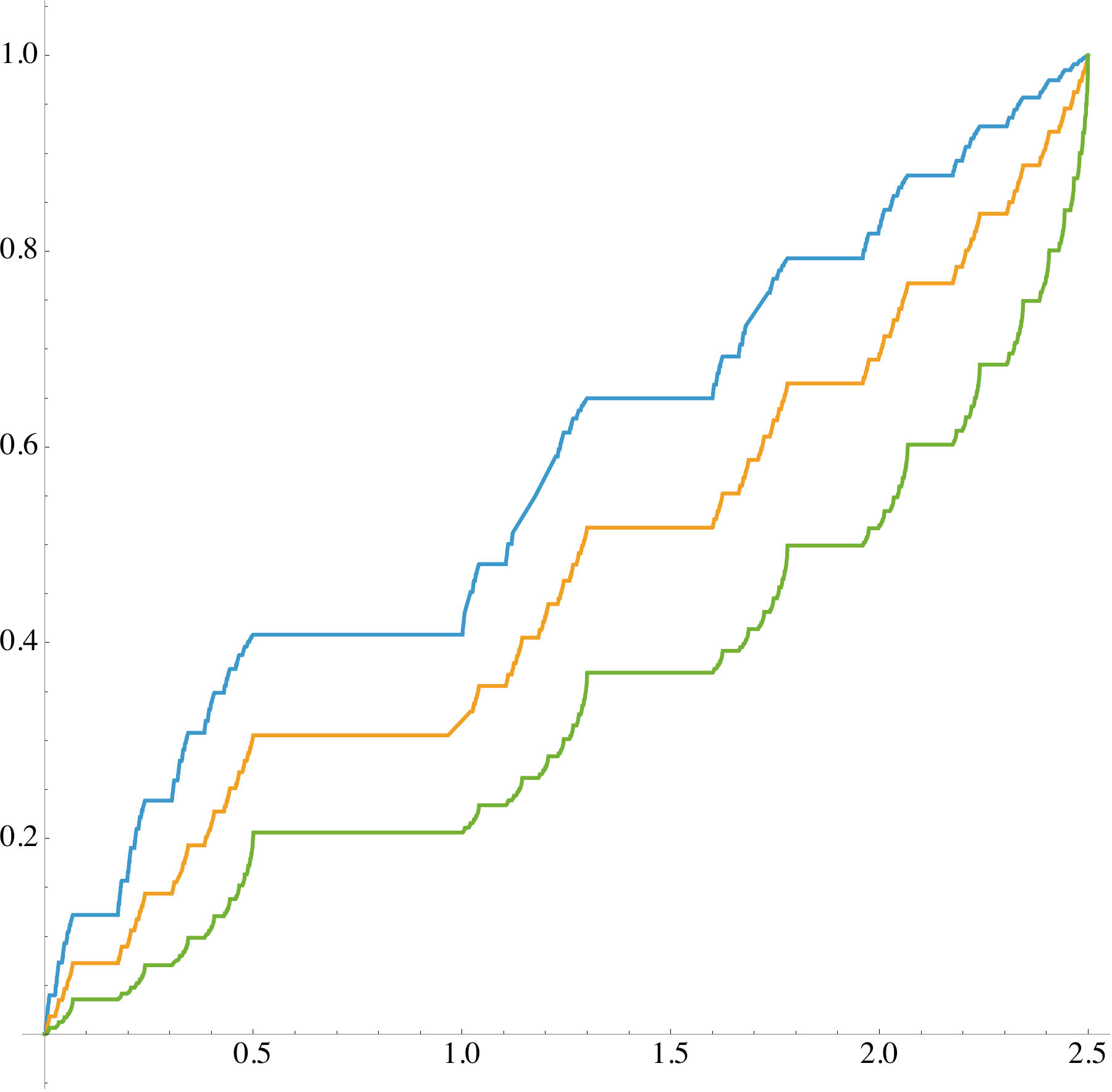} 
        b) $a=0.6$
    \end{center} }  
      
      	\parbox{0.49\textwidth}{
	\begin{center}
      \includegraphics[width=\linewidth]{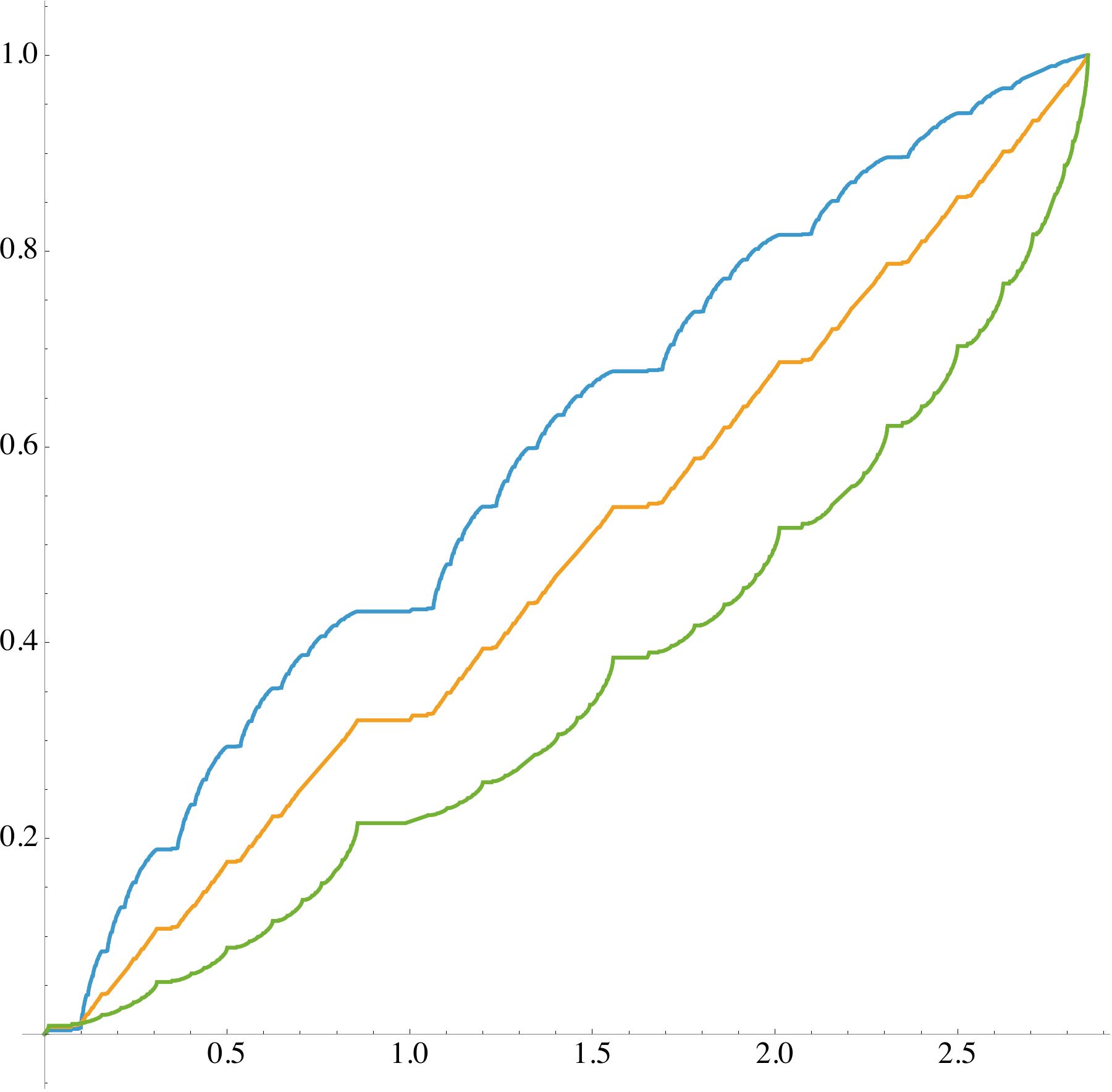} 
      c) $a=0.65$
    \end{center} }  
    \parbox{0.49\textwidth}{
	\begin{center}
      \includegraphics[width=\linewidth]{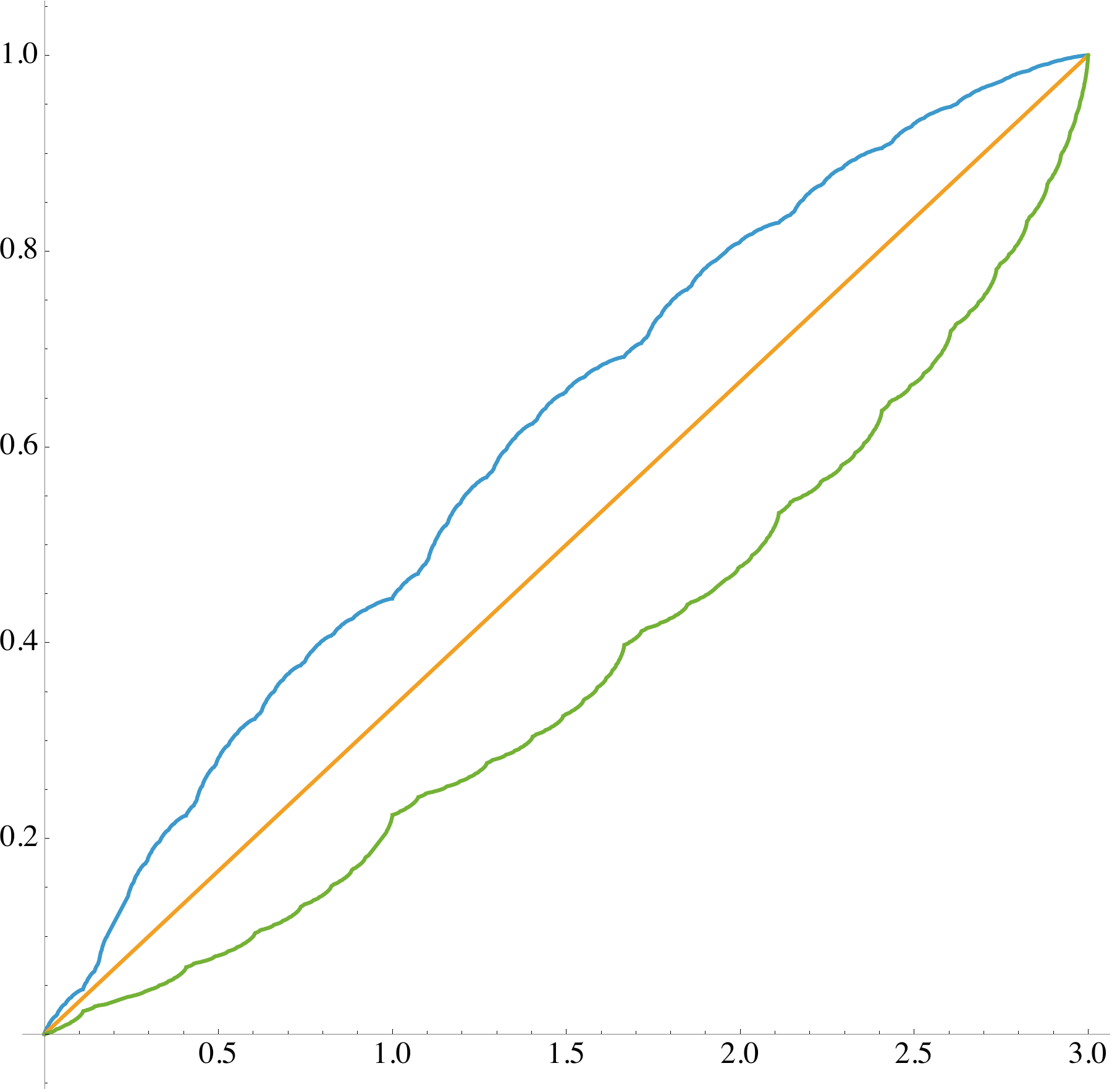} 
        d) $a=2/3$
    \end{center} }  
       
       \f \label{fig: nu} The distribution functions of $\nu_a$ for $p=0.3$, $p=0.5$, $p=0.7$ (top to bottom).
\end{center}

Let us make a few comments. 

\begin{remark} \label{rem}
\begin{enumerate}[(a), leftmargin=*]
\item The value of $\lambda_a$ is rather explicit from~\eqref{eq: lambda} when $\varkappa_a' <\infty$; see Figure~\ref{fig: lambda}.  In this case the left-hand side in~\eqref{eq: lambda} can be written as a finite sum even if $\varkappa_a=\infty$ because then the sequence $\{\delta_k\}$ is eventually periodic. Thus, $\lambda_a$ is a solution to a polynomial equation of order $\varkappa_a'+1$, and we will give some of its values in Section~\ref{Sec: lumped}. See Figure~\ref{fig: kappa} for a graph of~$\varkappa_a'$.
If $\varkappa_a'=\infty$, no simplification of~\eqref{eq: lambda} appears to be possible because of chaotic behaviour of the orbit of zero, unless $a=\frac23$ and $p=\frac12$. In this case 
\[
\lambda_{2/3}(1/2)=3/4;
\]
see Proposition~\ref{prop: p < 1/2}. We found this value computing the left-hand side of~\eqref{eq: lambda} for $\lambda = ap$ using the $(\frac{1}{a}, \frac12)$-expansion of $0$. In Section~\ref{sec: transfer}, we will present an alternative way to compute $\lambda_{2/3}(1/2)$ and to establish \eqref{eq: main equiv} with explicit expressions for $c$ and $V(x)$ in the case $a=\frac23$ and $p=\frac12$, based on a close connection between the transition operator of $\{X_n\}$ killed at leaving $[0,\infty)$ and the {\it transfer operator} associated with $T_{2/3}$. 

\item The mapping $a\mapsto\lambda_a$ has intervals of constancy essentially due to discreteness of the innovations. These intervals are the intervals of constancy of $a\mapsto\varkappa_a'$, cf.~ Figures~\ref{fig: lambda} and~\ref{fig: kappa}.
In contrast, the mapping $a\mapsto\lambda_a$  is strictly increasing if the innovations have a density that is strictly positive on $\R$ and log-concave, see Aurzada et al.~\cite[Theorem~2.7]{AMZ21}. In the particular case of the standard normal innovations, Aurzada and Kettner~\cite{AK19} derived a series expansion for $\lambda_a$. For the uniformly distributed innovations, $\lambda_a$ was found in a rather explicit form by Alsmeyer et al.~\cite{ABRS21}, see Propositions~2.4 and 3.11, and Remark~5.9(b) there. Based on numerical evidence, it appears that $ a\mapsto \lambda_a$  is strictly increasing in this case as well.

\item The fact that $\nu_a$ is singular continuous for every $a\in (\frac12, \frac23)$ and absolutely continuous for $a=\frac23$ and $p=\frac12$
reminds us strongly of the same type of behaviour for Bernoulli convolutions. Recall that the {\it Bernoulli convolution} with  parameter $a$ is the distribution $\pi_a$ of the series $\sum_{k=1}^\infty a^{k-1}\xi_k$. This is the stationary distribution for the chain $X_n$.
It is well-known that if $p=\frac12$, then $\pi_a$ is singular continuous for all $a<\frac12$, $\pi_{1/2}$ is the uniform distribution on $[-2,2]$, and $\pi_a$ is absolutely continuous for almost all $a\in(\frac12,1)$; see~\cite{Solomyak04}. In Section~\ref{sec: reversed time} we shall describe the relation between our model and Bernoulli convolutions in more detail.

\item The rate of convergence in \eqref{eq: main equiv} and ~\eqref{eq: weak lim} is exponential; see~\eqref{eq:PF} and \eqref{eq:quasi-stat}. Moreover, we extend \eqref{eq: weak lim} to convergence of the conditional functionals of the form $\E_x(f(X_n)|\tau >n)$, which also holds true at an exponential rate; see~Proposition~\ref{prop: rate}. 
\end{enumerate}
\end{remark}

We now consider the case $a\in[0,\frac12]$. Here, the analysis of persistence is rather straightforward. By $\frac{1}{1-a} \le \frac1a$, for any starting point $x \in [0, \frac{1}{a})$, we have $\tau=\inf\{n\ge1:\xi_n=-1\}$. Consequently, for such $x$ we have $\pr_x(\tau>n)  = p^n$, and thus 
\begin{equation} \label{eq: lambda <1/2}
\lambda_a(p)=p  \text{ for }  a \in (0, \tfrac12].
\end{equation}
This allows us to obtain the following analogue of Theorem~\ref{thm: main} and Corollary~\ref{cor:large-x} for all $x \ge 0$.

\begin{proposition} \label{prop: a < 1/2}
Let $\{X_n\}$ be a Markov chain defined by  equation
\eqref{AR-def} with some $a \in (0, \frac12]$. Assume that the innovations $\{\xi_n\}$ satisfy \eqref{Rad-assump} with some
$p \in (0,1)$. Denote 
\[
\sigma'=\inf \{n \ge 0: X_n <1/a \}, \qquad \sigma''=\inf \{n \ge 0: X_n <6 \}.
\]
Then for any $x \ge 0$ when $a <\frac12$ and any $x \in [0,2)$ when $a =\frac12$, for all integer $n$ large enough we have
\begin{equation} \label{eq: main triv}
\pr_x( \tau >n) = \E_x p^{-\sigma'} \cdot p^n,
\end{equation}
and for any $x \ge 2$ when $a = \frac12$,  we have
\begin{equation} \label{eq: main equiv n}
\pr_x(\tau >n ) \sim q\E_x p^{-\sigma''} \cdot np^{n-1}
\end{equation}
as $n \to \infty$. Moreover, for any $x \ge 0$, the conditional distributions $\pr_x(X_n \in \cdot \mid \tau>n)$ converge weakly to the $\delta$-measure at point $\frac{1}{1-a}$. This measure is quasi-stationary when $a <\frac12$, in the sense of \eqref{eq: quasi-st}. There is no quasi-stationary probability measure when $a=\frac12$.
\end{proposition}

Notably, the order of asymptotics in \eqref{eq: main equiv n} differs from that in \eqref{eq: main equiv} and \eqref{eq: main triv}.

\subsection{Outline of the approach and generalizations}

Our study of the asymptotics of the persistence probability $\pr_x(\tau>n)$ uses one of the most standard approaches via \mbox{(quasi-)} compactness of the transition operator $P$ of $\{X_n\}$ killed at leaving $[0,\infty)$.
The main novelty consists in the choice of an appropriate Banach space.
Our choice is prompted by the connection between our AR$(1)$ chain with the $\pm 1$ innovations and the dynamical system given by the iterations of $T_a$. This is explained in detail in Section~\ref{sec:Operator}.
We will see that $P$ is quasi-compact on a certain closed subspace of the Banach space $BV$ of functions of bounded variation on $[0, \frac{1}{1-a}]$, where $P$ can be represented by a linear operator acting on summable sequences of length $\varkappa_a'+1$; see Proposition~\ref{prop: A} and Remark~\ref{rem:compact}. We will use this representation to show that the leading eigenvalue of $P$ is $\lambda_a$ and the corresponding eigenfunction is $V$, and then  prove~\eqref{eq: main equiv}.
To prove the convergence of conditional distributions in \eqref{eq: weak lim}, we consider the operator $P$ on the whole of $BV$ and derive an appropriate version of the Perron--Frobenius theorem for $P$, see Subsection~\ref{ssec:cond}. We will also show that $P$ is quasi-compact on $BV$; see Remark~\ref{rem: compact BV}.

In all of our proofs it takes much more effort to consider the case $p < 1/2$, where we need to use uniform upper bounds for the frequencies $L_n(x)/n$ of zeros in the $(\frac1a, \frac12)$-expansion of $ax/2$. We believe that such bounds, presented in Section~\ref{sec: lambda}, are of independent interest. 

\begin{remark}
Our approach can be extended to study persistence of the AR(1) chains with the innovations that take two arbitrary values of different sign. Indeed, thanks to a multiplicative rescaling, it suffices to consider the case where $\pr(\xi_1=1)=p$ and $\pr(\xi_1=-b)=q=1-p$ for some $b>0$. Then for any $a \in (\frac{b}{b+1}, \frac{b+1}{b+2})$, the deterministic relation in reversed time~\eqref{eq: reversed time} remains valid if we substitute $T_a$ by the mapping $T_{a, b}$ that is defined by
\begin{align*} 
T_{a,b}(x)=
\begin{cases}
\frac{1}{a}(x+b), & 0 \le x \le \frac{a(b+1)-b}{1-a},\\
\frac{1}{a}(x-1), &1 \le x \le \frac{1}{1-a}.
\end{cases}
\end{align*}
Define $\varkappa_{a,b}(x)$ as in \eqref{eq:kappa-def} with $I_a$ replaced by $(\frac{a(b+1)-b}{1-a},1)$. 

It is easy to check that, for any $a$ as above, our proofs of \eqref{eq: main equiv}  and~\eqref{eq: weak lim} carry over without change, and these relations remain true when $\lambda_a$, $\delta_k$, etc.\ are replaced by the corresponding quantities $\lambda_{a,b}$, $\delta_k^{(b)}$, etc., defined in terms of $T_{a, b}$ and $\varkappa_{a,b}$ in place of $T_a$ and $\varkappa_a$. In general, it can be that $\varkappa_{a,b}=1$, as opposed to $\varkappa_a \ge 2$. Furthermore, it follows that $1-\delta_k^{(b)}(x)$, for $0 \le k < \varkappa_{a, b}(x)+1 $, are the first digits  in the $(\beta, \alpha)$-expansion of $\frac{ax}{b+1}$ with $\beta=\frac{1}{a}$ and $\alpha=\frac{b}{b+1}$. For the critical value $a = \frac{b+1}{b+2}$, our proofs of \eqref{eq: main equiv}  and \eqref{eq: weak lim} remain valid without change if the orbit of $0$ under $T_{a,b}$ does not hit $1$, yielding 
\[
\lambda_{\frac{b+1}{b+2},  b}\Big(\frac12\Big)=\frac{b+2}{2(b+1)}.
\]


\end{remark}

\medskip

The structure of the rest of the paper is clear from the titles of the following sections. 

\section{Deterministic dynamics under time reversal and three approaches to persistence} \label{sec: discussion}
In this section we prove the deterministic dynamics in the reverse time, given by \eqref{eq: reversed time}, and describe the three possible approaches to persistence of the AR($1$) chains with $\pm 1$ innovations. All these approaches use the dynamical system featuring in~\eqref{eq: reversed time}. The first method is based on a reduction to finite Markov chains. Unfortunately, this reduction does not work for all values of the parameter $a$. For this reason we introduce an alternative, more analytic approach based on compactness properties of the operator $P$. The third approach works only for $a=\frac23$. Although we do not make use of it, we present it to highlight a connection to the well-developed theory of transfer operators.

The following formula consistently extends the definition of $T_a$ to all $a \in (0,1)$:
\begin{align*}
T_a(x)=
\begin{cases}
\frac{1}{a}(x+1), & 0 \le x < 1 \text{ and } x \le \frac{2a-1}{1-a},\\
\frac{1}{a}(x-1), &1 \le x \le \frac{1}{1-a}.
\end{cases}
\end{align*}

\subsection{Deterministic behaviour in reverse time} \label{sec: reversed time}
Let us prove the deterministic dynamics of $\{X_n\}$ in the reversed time as described in \eqref{eq: reversed time}, extending  the range of the parameter to $a \in (0, \frac23)$. Namely, we claim that for all 
$n \ge 1$ and all starting points $x \in [0, \frac{1}{1-a}]$, on the event $\{\tau>n\}$ it holds that $X_{n-k}=T_a^k(X_n)$ for $1 \le k \le n$. 

This is trivial for $a \in (0, \frac12)$ because on the event $\{\tau>n\}$ it must be $X_k=a X_{k-1} + 1$ for all $1 \le k \le n$, because otherwise $X_k<0$ by
\begin{equation} \label{eq: X_k<}
X_k=a X_{k-1} - 1 \le \frac{a}{1-a}-1 =\frac{2a-1}{1-a}.
\end{equation}
Hence $X_k\ge 1$ and $X_{k-1}=\frac{1}{a}(X_k-1) = T_a(X_k)$, as claimed.

Let us prove \eqref{eq: reversed time} for $a \in [\frac12, \frac23)$. If $X_n<1$, then $X_n =a X_{n-1} -1$ because the other option $X_n =a X_{n-1} +1$ is impossible by $a X_{n-1} +1 \ge 1$ on $\{ \tau >n\}$. Hence $X_n\le\frac{2a-1}{1-a}$ by~\eqref{eq: X_k<}. Thus, $X_n$ is in the domain of $T_a$ and we can write $X_{n-1}=\frac{1}{a}(X_n+1)=T_a(X_n)$. If $X_n \ge 1$, then $X_n =a X_{n-1} +1$ because otherwise $X_n =a X_{n-1} -1<1$ by~\eqref{eq: X_k<}. Then $X_{n-1}=\frac{1}{a}(X_n-1)=T_a(X_n)$. 
We thus checked that $X_{n-1}=T_a(X_n)$ holds true on $\{\tau>n\}$ in either case, and \eqref{eq: reversed time} follows by induction.

Note that the above argument does not work when $a \ge \frac23$ because it is impossible to specify whether $X_{n-1}=\frac1a(X_n-1)$ or $X_{n-1}=\frac1a(X_n+1)$ for $ X_n \in [1, \frac{2a-1}{1-a}]$.

The argument above also shows that for $a \in [\frac12, \frac23)$, 
on the event $\{\tau>n\}$  we can recover the innovations as follows:
\[
\xi_{n-k}= (-1)^{\ind \{T_a^k(X_n) < 1\}}, \qquad 0 \le k \le n-1.
\]
In other terms, by \eqref{eq: T mod 1},
\begin{equation}
\xi_{n-k}= (-1)^{1-d_k(a X_n/2)} = 2 d_k(a X_n/2) -1.
\end{equation}
Thus, $(\xi_{n-k}+1)/2$ for $0 \le k \le n-1$ are the first $n$ digits in the $(\frac{1}{a}, \frac12)$-expansion of $a X_n/2$. 

The deterministic dynamics in the reversed time described above is essentially the reason for singular continuity of the quasi-stationary distributions $\nu_a$. The same effect can be also observed in the unconditional setting. More precisely, we shall now show that the unconditioned chain $\{X_n\}$ is deterministic in the reversed time for every $a\in (0, \frac12)$. We first notice that $|X_k|\ge\frac{1-2a}{1-a}$ for all $k\ge1$ and for all starting points $x\in[-\frac{1}{1-a},\frac{1}{1-a}]$; this follows as in \eqref{eq: X_k<}. Assume that $X_n\ge\frac{1-2a}{1-a}$. In general, we have two possibilities for $X_{n-1}$, namely $X_{n-1}=\frac{1}{a}(X_n+1)$ and 
$X_{n-1}=\frac{1}{a}(X_n-1)$. The assumption $X_n\ge\frac{1-2a}{1-a}$ implies that the case where $X_{n-1}=\frac{1}{a}(X_n+1)$ is not possible since 
\[
\frac{1}{a}\left(\frac{1-2a}{1-a}+1\right)
=\frac{2-3a}{a(1-a)}>\frac{1}{1-a}.
\]
Similar arguments show that $X_{n-1}=\frac{1}{a}(X_n+1)$ in the case 
$X_n\le-\frac{1-2a}{1-a}$. As a result we have 
\begin{equation}
\label{eq:reversed.new}
X_{n-k}=G^k_a(X_n),\quad 0 \le k\le n,
\end{equation}
where
\[
G_a(x)=
\begin{cases}
\frac{1}{a}(x+1), & x\in[-\frac{1}{1-a},-\frac{1-2a}{1-a}],\\ 
\frac{1}{a}(x-1), & x\in[\frac{1-2a}{1-a},\frac{1}{1-a}].
\end{cases}
\]

This property implies that the limiting distribution $\pi_a$ is supported on the points $z$ whose orbit under $G_a$ does not hit the interval 
$\left(-\frac{1-2a}{1-a},\frac{1-2a}{1-a}\right)$. The set of such points has Lebesgue measure zero. 
Since the two ``branches'' of $G_a$ are onto the whole of the interval $[-\frac{1}{1-a},\frac{1}{1-a}]$, besides showing that the distribution $\pi_a$  is singular and  continuous, one can compute the Hausdorff dimension of its support. The proof of these properties via deterministic evolution in the reversed time can be found in Lemma 2 by Bovier~\cite{Bovier92}. Relation~(II-17) there is equivalent to \eqref{eq:reversed.new}.
\subsection{Reduction to a finite chain when
$\varkappa'_a<\infty$} \label{Sec: lumped}
In this section we explain the approach based on reduction to a Markov chain obtained by lumping the states between the points of the orbit of $0$ under $T_a$. This Markov chain has a finite number of states when 
$\varkappa'_a<\infty$, which allows one to find the asymptotics in the exit problem using the classical Perron--Frobenius theorem for non-negative matrices. This also allows us to determine the exponent $\lambda_a$ numerically.

The key to the approach is a coupling property for the stopped Markov chain $\{X_{n \wedge \tau}\}_{n \ge 0}$ started from different points. In order to state this property, we introduce additional notation. For any real $x$, denote by $\{X_n^x\}_{n\ge0}$ the autoregressive sequence defined in \eqref{AR-def} starting at $X_0=x$, and put 
\[
\tau_x:=\inf\{n\ge0: X_n^x< 0\}
\] 
as in \eqref{tau-def}. 
Consider the following set of non-decreasing functions on $\R$:
\[
U_+=\left\{f:f(x)=\sum_{k=0}^{\varkappa_a} u_k{\ind}\{x\ge T_a^k(0)\}, \sum_{k=0}^{\varkappa_a} u_k < \infty, u_k>0\right\}.
\]

\begin{proposition} \label{prob: coupling}
Assume that $a\in (0,\frac{2}{3}]$ and $f \in U_+$. Then for any  
$x, y \in [0, \frac{1}{1-a}]$ such that $f(x)=f(y)$, we have $\tau_x = \tau_y$ and
\begin{equation} \label{eq: coupling}
f(X_{n \wedge \tau_x}^x)  = f(X_{n \wedge \tau_y}^y), \qquad n \ge 1. 
\end{equation}
\end{proposition}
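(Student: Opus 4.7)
The natural approach is a pathwise coupling: drive $\{X_n^x\}$ and $\{X_n^y\}$ by the same innovation sequence $\{\xi_n\}$ and argue by joint induction on $n$ that $\{n<\tau_x\}=\{n<\tau_y\}$ and $f(X_n^x)=f(X_n^y)$ on this common event. Since $T_a^k(0)\ge 0$ for every $k$, every $f\in U_+$ vanishes on $(-\infty,0)$, so the equality extends automatically to $n\ge\tau_x=\tau_y$, yielding the full conclusion. For $a\in(0,\frac12)$ we have $0\in I_a$, hence $\varkappa_a=0$, $J:=\{T_a^k(0):0\le k<\varkappa_a+1\}=\{0\}$, and $f$ is constant on the state space, while $aX_n^x-1<0$ holds for every admissible state, so the proposition is immediate. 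I will assume henceforth $a\in[\frac12,\frac23]$.

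The key observation is that, thanks to the strict positivity of the $u_k$'s, the condition $f(x)=f(y)$ for $x\le y$ is equivalent to $(x,y]\cap J=\emptyset$. Thus the induction reduces to propagating this ``no orbit point in the gap'' property under common-innovation dynamics. I fix $n<\tau_x\wedge\tau_y$ with $X_n^x\le X_n^y$ satisfying this property and condition on the value of $\xi_{n+1}$.

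If $\xi_{n+1}=+1$, then $X_{n+1}^x, X_{n+1}^y\ge 1$, so neither chain dies; any orbit point $T_a^k(0)\in(aX_n^x+1, aX_n^y+1]$ would then lie in $(1,\frac{1}{1-a}]$, and the right branch of $T_a$ would supply $T_a^{k+1}(0)=\frac{1}{a}(T_a^k(0)-1)\in(X_n^x,X_n^y]\cap J$, contradicting the hypothesis. If $\xi_{n+1}=-1$, I would use $\frac{1}{a}=T_a(0)\in J$ together with the hypothesis to conclude that $X_n^x$ and $X_n^y$ lie on the same side of $\frac{1}{a}$. If both are $<\frac{1}{a}$, the two chains die simultaneously at time $n+1$; if both are $\ge\frac{1}{a}$, they both survive, and any orbit point in $(aX_n^x-1, aX_n^y-1]\subseteq[0,\frac{2a-1}{1-a}]$ would produce, via the left branch of $T_a$, the point $T_a^{k+1}(0)=\frac{1}{a}(T_a^k(0)+1)\in(X_n^x,X_n^y]\cap J$, once more a contradiction.

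The main conceptual step is recognizing that the two branches of $T_a$ correspond precisely to the two signs of the innovation, so that the forward step of the killed chain acts as the inverse of one application of $T_a$; once that identification is made, the induction is mechanical. I expect no essential difficulty beyond checking a few boundary inclusions such as $aX_n^y-1\le\frac{2a-1}{1-a}$, which follows from $X_n^y\le\frac{1}{1-a}$, and the fact that the orbit indices appearing on the right-hand sides remain in $\{0,\dots,\varkappa_a\}$ because the preimage is taken at a point outside the hole $I_a$.
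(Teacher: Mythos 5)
Your coupling-by-common-innovations induction is essentially the paper's argument in a different packaging: the paper reduces to one step and shows $f$ is constant on $aJ\pm1$ for the level-set interval $J$, while you propagate the equivalent condition ``no orbit point in the half-open gap $(X_n^x,X_n^y]$''; in both cases the engine is identical — an orbit point in the image gap would pull back under the relevant branch (equivalently, map forward under $T_a$) to an orbit point in the original gap. Your half-open formulation even sidesteps most of the right-continuity/endpoint bookkeeping the paper goes through. However, there is one genuine unaddressed step, precisely the corner case the paper's proof has to flag. In your $\xi_{n+1}=-1$, both-survive case with $a=\tfrac23$, the image gap is $(aX_n^x-1,\,aX_n^y-1]\subset[0,1]$ and its right endpoint can equal $1$ (namely when $X_n^y=\tfrac{1}{1-a}=3$). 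If an orbit point $T_a^k(0)$ sat at that endpoint, your formula $T_a^{k+1}(0)=\tfrac1a\bigl(T_a^k(0)+1\bigr)$ is simply false there, because the paper's convention is $T_{2/3}(1)=0$, not $3$; so no contradiction with the induction hypothesis is produced, and indeed if the orbit of zero could contain $1$ the induction step (and the proposition itself, since $1\in(aX_n^x-1,1]$ carries positive $f$-mass) could fail. Closing this requires the separate fact that $T_{2/3}^k(0)\neq 1$ for all $k$, which the paper proves by the parity argument in Section~\ref{sec:Operator} (from $T_a^k(0)=-\sum_{i<k}(-1)^{\delta_i}a^{i-k}$, the identity $2^k=-\sum_{i<k}(-1)^{\delta_i}3^{k-i}2^i$ is impossible), and which its own proof of the proposition invokes at exactly the analogous point.

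So: right approach and correct in all other cases (your boundary checks $aX_n^y+1\le\tfrac1{1-a}$, $aX_n^y-1\le\tfrac{2a-1}{1-a}$, and the index bookkeeping via $T_a^k(0)\notin I_a$ are fine, as is the $a<\tfrac12$ and simultaneous-death discussion), but the dismissal of the boundary cases as routine hides a step that genuinely needs an extra, nontrivial input at $a=\tfrac23$. Add the citation of the orbit-avoids-$1$ fact (or reprove it) and the argument is complete.
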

\begin{proof}
Assume that $a \in (0, \frac12)$. We have $\varkappa_a'=0$ and thus $f$ is constant on $[0, \frac{1}{1-a}]$. Therefore, $f(X_{n \wedge \tau_x}^x)$ stays constant until $\{X_n\}$ leaves $[0, \frac{1}{1-a}]$ at the moment $\tau_x$. Noting now that $\tau_x=\inf\{n \ge 1: \xi_n =-1\}$ for every $x\in[0, \frac{1}{1-a}]$, we obtain the equalities $\tau_x=\tau_y$ and \eqref{eq: coupling}.

Assume now that $a \in [\frac12, \frac23)$. From the representation
\[
X_{(n+1) \wedge \tau_z}^z = 
\begin{cases}
a X_{n  \wedge \tau_z}^z + \xi_{n+1}, \quad \text{if } \tau_z >n,\\
X_{ n \wedge \tau_z}^z, \quad \text{if } \tau_z \le n,
\end{cases}
\]
which holds true for every real $z$, and the facts that  $X_{\tau_z}^z<0$ and $f=0$ on $(-\infty, 0)$, we see that equality \eqref{eq: coupling}  follows by a simple inductive argument if we prove it for $n=1$. 

Fix an $x$ and consider the set
\[
J=f^{-1}(f(x)) \cap \Big [0, \frac{1}{1-a} \Big].
\] 
Assume that $y \in J$ is distinct from $x$, otherwise the claim is trivial. The assumption $f \in U_+$ implies that $f$ is the distribution function of a finite atomic measure supported at the points of $\{T_a^k(0): 0 \le k < \varkappa_a +1 \}$. Hence $f$ jumps at each of these points and is right-continuous there, and is continuous at all the other points. Therefore, since the set $J$ contains at least two points $x$ and $y$, it is an interval that has no point of  $\{T_a^k(0): 0 \le k < \varkappa_a + 1\}$ in its interior. This interval always includes its left endpoint and it does include the right one whenever this point is not in  $\{T_a^k(0): 0 \le k < \varkappa_a + 1 \}$.

To prove \eqref{eq: coupling}, we shall show that $f$ is constant on both intervals $aJ \pm 1$. 

We first argue that $f$ is constant on the interiors of these intervals. This can be violated only if  there is an integer $0 \le k \le \varkappa_a$ such that $T_a^k(0)$ is in the interior of one of these intervals. Equivalently, there is a $z \in \intr J$ such that $az+1 = T_a^k(0)$ or $az-1 = T_a^k(0)$. In the former case, we have $T_a^k(0) \ge 1$, hence $k < \varkappa_a$ and $z = \frac{1}{a}(T_a^k(0)-1)=T_a^{k+1}(0)$, which is impossible. In the latter case, we have  $T_a^k(0) <  \frac{a}{1-a}-1=\frac{2a-1}{1-a} \le 1$ by $a \le \frac23$ and $z<\frac{a}{1-a}$, hence $k < \varkappa_a$ and $z = \frac{1}{a}(T_a^k(0)+1)=T_a^{k+1}(0)$, which is again impossible. 

Denote by $r$ the right endpoint of $J$. By right-continuity of $f$, the value of $f$ at the left endpoint of $aJ - 1$ equals the value of $f$ on $\intr(aJ-1)$, and the same applies to $aJ +1$. This establishes constancy of $f$ on the whole of $aJ - 1$ and on the whole of $aJ +1$ if $r \not \in J$ because in this case the intervals $aJ \pm 1$ do not include their right endpoints. 

In the opposite case when $r \in J$, it remains to check that $f$ is continuous at $ar \pm 1$. This can be violated only if  there is an integer $0 \le k \le \varkappa_a$ such that $ar+1=T_a^k(0) $ or $ar-1=T_a^k(0) $. In the former case, the argument above applied for $r$ instead of $z$ shows that $r=T_a^{k+1}(0)$ for a $k < \varkappa_a$, hence $f$ is discontinuous at $r$, contradicting the assumption that $r \in J$. In the latter case, we arrive at the same contradiction because the argument above applies verbatim unless $r=\frac{a}{1-a}$ and $a=\frac23$, in which case the equality $T_a^k(0)=1$ is impossible, see Section~\ref{sec:Operator}. Thus, $f$ is constant on $aJ \pm 1$ in either case, which proves~\eqref{eq: coupling}.

Lastly, the equality $\tau_x=\tau_y$ follows from \eqref{eq: coupling} and the facts that $f =0$ on $(-\infty, 0)$ and $f> 0$ on $[0, \infty)$.
\end{proof}
\begin{corollary} \label{cor: lumped}
For any $x \in [0, \frac{1}{1-a}]$, the sequence
$f(X^x_n) \ind \{\tau_x > n \}$ is a time-homogeneous non-negative Markov chain under $\pr_x$ and it is true that
$\tau_x = \inf \{n \ge 1: f(X_n)=0\}$. If $\varkappa'_a<\infty$, this  chain has $\varkappa'_a +2$ states, including the absorbing state at zero.
\end{corollary}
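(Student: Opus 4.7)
The plan is to derive the corollary from Proposition~\ref{prob: coupling} in three small moves.

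First, I would pin down the zero set of $f$. Since $f \in U_+$ is given by $f(z)=\sum_{k=0}^{\varkappa_a} u_k \ind\{z \ge T_a^k(0)\}$ with $u_k>0$ and $T_a^0(0)=0$, and since the range of $T_a$ is contained in $[0,\frac{1}{1-a}]$ so $T_a^k(0)\ge 0$ for every~$k$, we have $f(z)=0$ iff $z<0$. Combined with $X_0=x\ge 0$, which forces $\tau\ge 1$, this yields $\tau=\inf\{n\ge 1: f(X_n)=0\}$.

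Next, I would verify the Markov property of $Y_n:=f(X_n)\ind\{\tau>n\}$ by splitting on $\{\tau\le n\}$ and $\{\tau>n\}$. On the first event both $Y_n$ and $Y_{n+1}$ vanish, so the transition is trivial. On the second event, $\ind\{\tau>n+1\}$ reduces to $\ind\{X_{n+1}\ge 0\}$, and combining this with $f\equiv 0$ on $(-\infty,0)$ gives the crucial identity $Y_{n+1}=f(X_{n+1})$. By the Markov property of $\{X_n\}$ the conditional distribution of $Y_{n+1}$ given $\mathcal F_n$ is the law of $f(aX_n+\xi)$ with $\xi$ an independent $\pm 1$ variable; the heart of the argument is to show that this law depends only on $f(X_n)$. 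Proposition~\ref{prob: coupling} applied with $n=1$ supplies precisely this: whenever $f(x)=f(y)$, running both chains with the same innovation $\xi$ yields $f(X_1^x)=f(X_1^y)$ almost surely, which in the two-point setting forces $f(ax+1)=f(ay+1)$ and $f(ax-1)=f(ay-1)$ separately, so the laws of $f(aX_n+\xi)$ under $X_n=x$ and $X_n=y$ coincide. Time-homogeneity is then immediate because the distribution of~$\xi$ does not depend on~$n$.

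Finally, I would count the states when $\varkappa_a'<\infty$. By definition of $\varkappa_a'$, the orbit $\{T_a^k(0): 0\le k\le \varkappa_a\}$ contains exactly $\varkappa_a'+1$ distinct points in $[0,\frac{1}{1-a}]$, one of which is $0=T_a^0(0)$. Sorting them partitions $[0,\frac{1}{1-a}]$ into $\varkappa_a'+1$ half-open sub-intervals on each of which $f$ is constant; merging the weights $u_k$ at any repeated orbit points preserves positivity, so these constants form a strictly increasing sequence of $\varkappa_a'+1$ distinct positive values. Together with the absorbing value $0$ attained on $\{\tau\le n\}$, this produces exactly $\varkappa_a'+2$ states. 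The principal subtlety of the whole argument is the Markov step, where Proposition~\ref{prob: coupling} is indispensable; everything else is bookkeeping that reads directly off the description of $U_+$ and the orbit of zero.
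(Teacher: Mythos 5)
Your proposal is correct, and its skeleton is the same as the paper's: everything is reduced to the level-set invariance supplied by Proposition~\ref{prob: coupling} with $n=1$, plus the observations that $f=0$ exactly on $(-\infty,0)$ and that the distinct orbit points carry strictly positive merged weights, giving $\varkappa_a'+1$ positive values of $f$ on $[0,\frac{1}{1-a}]$ plus the absorbing state $0$. The one genuine difference is in how the Markov property is concluded. The paper restricts the stopped chain to $(-\infty,\frac{1}{1-a}]$ and invokes the classical lumpability criterion of Burke and Rosenblatt \cite{BR58}, conceding in passing that the cited result is formally stated for countable chains; you instead verify lumpability by hand, conditioning on $\mathcal F_n$, splitting on $\{\tau\le n\}$ (where $Y_{n+1}=0$ deterministically, matching $\{Y_n=0\}=\{\tau\le n\}$) versus $\{\tau>n\}$ (where $Y_{n+1}=f(aX_n+\xi_{n+1})$ because $f$ vanishes on the negatives), and using the common driving noise to extract $f(ax+1)=f(ay+1)$ and $f(ax-1)=f(ay-1)$ from the a.s.\ equality $f(X_1^x)=f(X_1^y)$ — in fact the a.s.\ equality of the two random variables already gives equality of their laws, so the separate extraction is a harmless extra. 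What your route buys is self-containedness: it sidesteps the citation caveat entirely, at the cost of writing out a one-step conditioning argument that the paper outsources to \cite{BR58}; both proofs use Proposition~\ref{prob: coupling} as the sole substantive input, together with the implicit fact (noted in the introduction) that $X_n\in[0,\frac{1}{1-a}]$ on $\{\tau>n\}$ so that the proposition is applicable to the relevant pairs of points.
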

\begin{proof}
We first restrict both function $f$ and the stopped Markov chain $\{X_{n \wedge \tau}\}$ to the set $(-\infty, \frac{1}{1-a}]$. Then Proposition~\ref{prob: coupling} implies that the distribution of $f(X_{1 \wedge \tau})$ under $\pr_z$ does not change as $z$ varies over any level set of $f$. This condition is known to ensure, see e.g.\ \cite[Corollary 1]{BR58} (this particular reference applies only for countable chains), that the lumped sequence $\{f(X_{n \wedge \tau})\}_{n \ge 0}$ is a Markov chain on the range of $f$. This set has cardinality $\varkappa_a'+2$ when $\varkappa_a'<\infty$. The state zero is clearly absorbing. 
Since $f=0$ on $(-\infty, 0)$, we have $f(X_{n \wedge \tau}) = f(X_n) \ind \{\tau > n \}$. Thus, $f(X_n) \ind \{\tau > n \}$ is a Markov chain. The equality for $\tau$ follows because $f > 0$ on $[0, \infty)$ by $f \in U_+$.
\end{proof}
Note that if $\varkappa'_a<\infty$, we can relabel the states, hence the result of Corollary~\ref{cor: lumped} remains valid for the  function $f(x)=\sum_{k=0}^{\varkappa_a'} u_k{\ind}\{x\ge T_a^k(0)\}$ that satisfies $f(T_a^k(0)) = k+1$ for all integer $0 \le k \le \varkappa'_a$. For this $f$, the transition probabilities of the chain $f(X_n) \ind \{\tau > n \}$ satisfy $p_{0,0}=1$, $p_{1,0}=1-p$, and
\[
p_{k+1, k} =(1- p) \delta_{k-1} + p(1- \delta_{k-1}), \qquad 1 \le k \le \varkappa'_a.
\]
The latter equalities follow from
\[
T_a^{k-1}(0)=
\begin{cases}
a T_a^k(0)-1, \quad \text{if } T_a^{k-1}(0)<1,\\
a T_a^k(0)+1, \quad \text{if } T_a^{k-1}(0) \ge 1.
\end{cases}
\]

Moreover, for $1 \le k \le \varkappa'_a$ such that $T_a^k(0)<1/a$, we complement the relations above by $p_{k+1,0}=1-p$ (and for such $k$ it must be $p_{k+1,k}=p$ since $\delta_{k-1}=0$). In addition, if $\varkappa_a<\infty$, we have $\varkappa_a=\varkappa'_a$ and  $p_{1,\varkappa'_a +1} =p$ because $\pr_0(X_1=1)=p$ and $f(1)=f(T_a^{\varkappa'_a}(0))$ since there are no points of the orbit of zero in $(T_a^{\varkappa'_a}(0), 1]$. For the remaining $0 \le k \le \varkappa'_a$, we cannot give a simple expression for the unique $k'$ such that $p_{k+1, k'}=1-p_{k+1, k}$.

Let us give two examples. Suppose that $\varkappa'_a = 2$. Equivalently,
\[
\frac{2a-1}{1-a} < T_a^2(0)= \frac{1-a}{a^2}\le 1,
\] that is 
\[
0.618 \approx \frac{\sqrt5 -1}{2} \le a < \frac16 \left(2 -\sqrt[3]{6 \sqrt{33} - 26} + \sqrt[3]{6 \sqrt{33} + 26} \right) \approx 0.6478.
\]
In this case the transition matrix of the lumped chain is given by
\[
\begin{pmatrix}
1 & 0 & 0&0 \\
1-p & 0 & 0 & p\\
0& 1-p & p & 0 \\
1-p & 0  & p & 0\\
\end{pmatrix}.
\]
Then $\lambda_a$ is the leading eigenvalue of the $3 \times 3 $ matrix obtained by discarding the first row and the first column from the matrix above.  This matches \eqref{eq: lambda}.

Suppose that $\varkappa'_a = 3$. Equivalently, $\frac{2a-1}{1-a} =  \frac{1-a}{a^2}$, i.e.\ $ a \approx 0.6478$, or  
\[
\frac{2a-1}{1-a} < T_a^3(0)= \frac{1-a-a^2}{a^3}\le 1 < T_a^2(0)= \frac{1-a}{a^2},
\] 
i.e.\ approximately, $0.5437 \le a < 0.5825$. In theses cases the transition matrices of the lumped chains are given respectively by
\[
\begin{pmatrix}
1 & 0 & 0&0 &0\\
1-p & 0 & 0 & p&0\\
0& 1-p & p & 0 &0 \\
1-p & 0  & p & 0 &0\\
0& 0& 0& 1-p &p \\
\end{pmatrix}
\text{ and } 
\begin{pmatrix}
1 & 0 & 0&0 &0\\
1-p & 0 & 0 & 0&p\\
0& 1-p & p & 0 &0 \\
1-p & 0  & p & 0 &0\\
1-p& 0& 0& p &0 \\
\end{pmatrix}.
\]

\begin{remark} We do not know how to make use of the lumped Markov chain $f(X_n) \ind \{\tau > n \}$ when $\varkappa'_a=\infty$. In particular, it is unclear how to describe concisely the transition probabilities of this chain because it may have uncountably many states even though every function $f \in U_+$ has countably many jumps. 
\end{remark}
In the next section we present a different approach, which works for all values of $a \in (0, \frac23]$, unlike the reduction described above. This method is based on the compactness properties of the transition operator of the killed chain $\{X_n\}$.


\subsection{Compactness approach} \label{sec:Operator}

Let $P$ be the transition operator of $\{X_n\}$ killed at exiting $[0,\frac{1}{1-a}]$. It acts on a measurable bounded function $f$ on $[0,\frac{1}{1-a}]$ as follows:
$$
Pf(x)=\E_x[f(X_1);\tau>1].
$$
Under assumption \eqref{Rad-assump} that the innovations are $\pm1$, we have
\begin{equation}
\label{eq:P+Rad}
Pf(x)=pf(ax+1)+q	f(ax-1)\ind \{x\ge 1/a\}.
\end{equation}			    							
We can see that if $f$ has finite right and left limits at every point, then the same is true for $Pf$. We will consider only such functions in what follows.

Since
\[
\mathbb{P}_x(\tau>n) = P^n \ind (x),
\]
we seek to find a Banach space of functions on $[0, 1/(1-a)]$ where $P$ would be compact or quasi-compact, expecting that the asymptotics of $P^n \ind$ is defined by the spectral radius of $P$, which should be the largest eigenvalue of $P$. To this end, let us consider the behaviour of the set of discontinuities $D_f$ of a function $f$ under the action of $P$, with continuity at the endpoints $0$ and $1/(1-a)$ understood as one-sided continuity.

1. The case  $a \in (0, \frac23)$.

Assume that $a \in [\frac12, \frac23)$. If $Pf$ is discontinuous at an $x \in [0,\frac{1}{1-a}]$, then the same holds true for at least one of the two terms in \eqref{eq:P+Rad}. Note that $f(ax+1)$ is discontinuous at $x$ if and only if $f$ is discontinuous at $y = ax +1$; here $y \in [1,\frac{1}{1-a}]$ and $x=\frac1a (y-1)$. On the other hand, $f(ax-1)\ind \{x\ge 1/a\}$ is discontinuous at $x$ if and only if $f$ is discontinuous at $y = ax -1$ or it  holds that $x = 1/a, f(0)\neq 0$; here $y \in [0,\frac{2a-1}{1-a}]$ and $x=\frac1a (y+1)$ in both cases. 

If $a \in (0, \frac12)$, then the second term in \eqref{eq:P+Rad} vanishes but the analysis of the first term remains unchanged.

The observations above can be summed up using the mapping 
\begin{align*}
T_a(x)=
\begin{cases}
\frac{1}{a}(x+1), & 0 \le x \le \frac{2a-1}{1-a},\\
\frac{1}{a}(x-1), &1 \le x \le \frac{1}{1-a}.
\end{cases}
\end{align*}
as follows: we showed that if $Pf$ is discontinuous at $x$, then
$x=T_a(y)$ for some $y \in D_f \setminus I_a$ or it holds that $x=1/a$, $f(0) \neq 0$, $a \ge \frac12$. In other words, for all $a \in (0, \frac23)$ we have
\begin{equation}
\label{eq:disc}
D_{Pf} \subset  T_a\big((D_f \cup \{0\}) \setminus I_a \big).
\end{equation}

It follows from \eqref{eq:disc} that for $a \in (0, \frac23)$, the set of measurable bounded functions on $[0,\frac{1}{1-a}]$ that are continuous at every point outside of $\{T_a^k(0): 0 \le k < \varkappa_a +1 \}$ is closed under the action of $P$; recall that
\[
\varkappa_a=\inf\{k\ge0:\, T_a^{k}(0)\in I_a\}.
\]
This suggests to consider the set of functions 
\begin{equation} \label{eq: U}
U=\left\{f:f(x)=\sum_{k=0}^{\varkappa_a} u_k{\ind}\{x\ge T_a^k(0)\}, \sum_{k=0}^{\varkappa_a} |u_k| < \infty, u_k\in \R\right\}
\end{equation}
on $[0, \frac{1}{1-a}]$, because we know how $P$ transforms the jumps of functions and every function in $U$ is defined by its jumps. The idea to consider general right-continuous step functions with countably many jumps (called {\it saltus functions}) in the context of linear mod 1 transforms goes back to Halfin~\cite{Halfin75}. 

We will show that the set $U$ is closed under the action of $P$. Then we will give a simple, explicit description of this action and show that $P$ is a quasi-compact operator on~$U$; see Proposition~\ref{prop: A} and Remark~\ref{rem:compact}.

2. The case $a \in [\frac23, 1)$. 

Here $\frac{2a-1}{1-a} \ge 1$ and thus the mapping $T_a$ does not account for the discontinuities of $Pf$ arising from the discontinuities of $f$ on $[1, \frac{2a-1}{1-a}]$ due to the second term in \eqref{eq:P+Rad}. The argument above gives
\begin{equation}
\label{eq:disc hard}
D_{Pf} \subset  T_a \big(D_f \cup \{0\}\big) \cup \left \{\frac1a(y+1): y \in D_f \cap \Big [1, \frac{2a-1}{1-a} \Big] \right \}.
\end{equation}

The right-hand side simplifies when $f$ has no discontinuities in $[1, \frac{2a-1}{1-a}]$. This motivates us to introduce
\[
\varkappa_a=\inf \left \{k\ge0:\, T_a^{k}(0)\in\Big [1, \frac{2a-1}{1-a} \Big] \right \}.
\]
Let us check that this definition matches the one in \eqref{eq:kappa-def} when $a =\frac23$. To this end, we shall show that the orbit of zero under $T_{2/3}$ is not purely periodic, i.e.\ it does not include $1$ (note in passing that the orbit is actually aperiodic). We use the identity
\[
T_a^k(0) = -\sum_{i=0}^{k-1} (-1)^{\delta_i} a^{i-k}, \qquad 1 \le k < \varkappa_a+1,
\]
which holds true for every $a \in [\frac12, \frac23]$ and follows by simple induction. Hence if $T_{2/3}^k(0)=1$, then $2^k=-\sum_{i=0}^{k-1} (-1)^{\delta_i} 3^{k-i} 2^i$, which is impossible because the right-hand side is odd. 

With this extended definition of $\varkappa_a$, the functional space $U$ introduced in \eqref{eq: U} is again closed under the action of $P$ if $a \in [\frac23 , 1)$ is such that $\varkappa_a = \infty$. The set of such $a$ is contained in $[\frac23, \frac{\sqrt 2}{2})$ because $T_a(0) \not \in  [1, \frac{2a-1}{1-a}]$ only when $\frac{2a-1}{1-a}< \frac{1}{a}$, hence $2a^2<1$. It is possible to show that this set has Lebesgue measure zero. 
Note that it contains points other than $a=2/3$, for example, the unique solution to $T_a^5(0)=1/a$ on $[2/3,1)$, which is $a \approx 0.691$. 
\begin{remark} \label{rem: >2/3}
Our method of proving tail asymptotics~\eqref{eq: main equiv} seem to work unchanged for every $a \in (\frac23 , 1)$ such that
$\varkappa_a = \infty$. 
\end{remark}

There is no reason to restrict our consideration merely to discontinuities. A similar argument yields the following representation for the killed transition operator in \eqref{eq:P+Rad}: for any $a \in (0, \frac23]$ and $x \in [0, \frac{1}{1-a}] \setminus \{3\}$, we have
\begin{equation} \label{eq: weightedPF}
Pf(x) = \sum_{y \in T_a^{-1}(x)} \big [q \ind\{y<1\}+p\ind\{y\ge1\} \big] f(y).
\end{equation} 

\subsection{Transfer operator approach} \label{sec: transfer}
Assume that $a=\frac23$, and use the shorthand $T=T_{2/3}$. The mapping $T$ is defined on the whole of  $[0, \frac{1}{1-a}]$. Equation \eqref{eq: weightedPF} now means that the killed transition operator $P$ is  a {\it weighted transfer} (or the {\it Ruelle}) {\it operator} associated with $T$, where the weight  takes two values $p$ and $q$. The weighted transfer operators are considered e.g.\ in the book by Baladi~\cite{Baladi}. The important particular case is the standard {\it transfer} (or the {\it Perron--Frobenius}) {\it operator} $P_T$, defined by $P_Tf(x)=\sum_{y \in T^{-1}(x)} f(y)/|T'(y)|$ for $x \in [0,3]$.
If $f \ge 0 $, then $P_Tf$ is the density of the measure with density $f$ pushed forward by $T$.

Assume now that $p=\frac12$. Then by \eqref{eq: weightedPF}, we have
\[
Pf(x)=\frac34 P_T f(x), \qquad 0 \le x <3, 
\]
and thus $P=\frac34 P_T$ as operators on $L^1([0,3])$. Since $T$ is piecewise linear and expanding (i.e.\ $\essinf_{0 \le x \le 3} |T'(x)|>1$), the operator $P_T$ is quasi-compact  on the space of functions of bounded variation on $[0,3]$ (with a.e.\ equal functions identified); see e.g.\ Boyarsky and G\'ora~\cite[Theorem~7.2.1]{BG97}. The leading eigenvalue of $P_T$ is simple and equals $1$, and this gives $\lambda_{2/3}(1/2)=3/4$ and also implies \eqref{eq: main equiv}, together with an alternative way of finding the function $V$. To explain this, note that the eigenfunction $h$ corresponding to the leading eigenvalue $1$ of $P_T$ is the invariant density under the transformation $T$. Its scaled version $\widehat h(x)=3h(3x)$ on $[0,1]$  is invariant under $\widehat T_{2/3}$. This density was found explicitly by Parry~\cite{Parry64}, and it is given (up to a multiplicative factor) by formula~\eqref{eq: Parry} below. Simplifying this formula and rescaling back to $[0,3]$, we can  recover our function $V$ given in~\eqref{eq: V}  for $a=\frac23$ and $p=\frac12$. In this case $h=c V/3$ is a probability density. This normalization can be seen by integrating $V$ and combining formula~\eqref{eq: const} for $c$ with the equations $\frac13 T^k_{2/3}(0) = 1- \sum_{n \ge 0}^\infty (\frac23)^{n+1} \delta_{k+n} $ for integer $k \ge 0$, which in turn follow from~\eqref{eq: T mod 1} and \eqref{eq: expansion Parry}. 

We also have an alternative proof of our result that the quasi-invariant distribution $\nu_{2/3}$ is uniform when $p=\frac12$. Indeed, it is easy to show that the density of $\nu_{2/3}$ is an eigenfunction of the {\it composition} (or the {\it Koopman}) {\it operator}  that is dual to $P_T$. The constant density is its eigenfunction and there are no other eigenfunctions by \cite[Theorem~3.5.2]{BG97}. Therefore, the density of $\nu_{2/3}$ is~constant.



\section{Existence of a positive solution to equation~\eqref{eq: lambda}} \label{sec: lambda}
Consider the function
\[
R_a(\lambda)=\sum_{k=0}^{\varkappa_a} \delta_k (p/\lambda)^{k+1} (q/p)^{L_k},
\]
which represents the left-hand side of~\eqref{eq: lambda}. We need to show that the equation $R_a(\lambda)=1$  has exactly one positive solution $\lambda=\lambda_a$. This equation arises naturally in the next section. We will establish existence together with the following bounds. 

\begin{proposition} \label{prop: p < 1/2}
Let $a \in (\frac12, \frac23]$. Then for every $p \in (0, 1)$, equation \eqref{eq: lambda} has a unique positive solution $\lambda_a  =\lambda_a(p)$. Moreover, there exists a constant $C=C_a>0$ such that  for every $p \in (0, 1)$, we have
\begin{equation} \label{eq: > lambda >}
(p/a) \max \big (1,  (q/p)^C\big )\ge \lambda_a > p \max \big (1,  (q/p)^C\big )
\end{equation}
and 
\begin{equation} \label{eq: L_k - L_m <}
L_n(x) - L_k(x) \le C (n-k+1) + 1
\end{equation}
for all $x \in [0, 1/(1-a)]$ and all integers $0 \le k \le n \le \varkappa_a(x)$.
The first inequality in \eqref{eq: > lambda >} is strict unless $a=\frac23$ and $p=\frac12$, in which case $\lambda_{2/3}(1/2) = 3/4$.
\end{proposition}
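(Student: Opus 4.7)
The plan is to build the entire result around the function $R_a(\lambda)$. First, I establish the uniform $L$-bound, which is the technical input that controls where $R_a$ converges. The decisive dynamical fact for $a\in(\frac12,\frac23]$ is the absence of two consecutive ones in the sequence $(\delta_k(x))$: if $T_a^i(x)<1$, then in fact $T_a^i(x)\le (2a-1)/(1-a)$ (since $I_a$ is excluded from the domain of $T_a$), and therefore $T_a^{i+1}(x)=(T_a^i(x)+1)/a\ge 1/a\ge 3/2>1$, forcing $\delta_{i+1}(x)=0$. It follows that $L_n(x)-L_k(x)\le \lceil(n-k)/2\rceil\le (n-k+1)/2+1$, so the $L$-bound holds with $C=1/2$; if sharper control on $C$ is needed for the lower-bound step below, one refines this by a finer analysis of how often $T_a$-orbits return to $[0,1)$.

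With the $L$-bound in hand, I analyse $R_a$ as a series. Writing $R_a(\lambda)=\sum_k \delta_k\, p^{k+1-L_k}q^{L_k}\lambda^{-(k+1)}$, each summand is a non-negative, continuous, strictly decreasing function of $\lambda$, and the Cauchy--Hadamard criterion combined with the $L$-bound shows that the series converges for $\lambda>\lambda^-:=p\max(1,(q/p)^C)$, so $R_a$ is continuous and strictly decreasing on $(\lambda^-,\infty)$ with $R_a(\lambda)\to 0$ as $\lambda\to\infty$. For the upper bound $R_a(\lambda^+)\le 1$, I reindex by $j$ setting $\{k_j\}=\{k:\delta_k=1\}$ (so that $L_{k_j}=j$) and use the identity $a^k T_a^k(0)=\sum_{i=0}^{k-1}(2\delta_i-1)a^i$ from Section~\ref{sec:Operator} to rearrange $2\sum_{j:k_j<k}a^{k_j}=a^k T_a^k(0)+(1-a^k)/(1-a)$. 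Passing to the limit $k\to\infty$ (or to $k=\varkappa_a$ when finite, using $T_a^{\varkappa_a}(0)<1$) yields $\sum_j a^{k_j+1}\le 1$, with equality iff $a=2/3$ (which forces $\varkappa_a=\infty$ as the hole $I_a$ becomes empty). When $q\le p$, $\lambda^+=p/a$ and $R_a(\lambda^+)=\sum_j a^{k_j+1}(q/p)^j\le\sum_j a^{k_j+1}\le 1$, with equality exactly when $a=2/3$ and $p=q=1/2$; when $q>p$, the extra factor $(q/p)^{j-C(k_j+1)}\le \sqrt{p/q}$ (from $k_j\ge 2j$ and $C=1/2$) gives $R_a(\lambda^+)<1$ strictly.

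The lower bound $R_a(\lambda^-)>1$ is the genuine difficulty. In the case $q\le p$, one simply computes $R_a(p)=\sum_j(q/p)^j\ge 1+q/p>1$, using that $k_1$ exists because for $a>1/2$ the orbit of $0$ cannot stay in the upper branch $[1,1/(1-a)]$ forever: the only fixed point $1/(1-a)$ is inaccessible from $0$ unless $a=1/2$. In the case $q>p$, which corresponds to $p<1/2$ and is explicitly flagged in Remark~\ref{rem}(a) as requiring extra effort, the delicacy is that $\lambda^-$ must sit at or inside the radius of convergence of $R_a$, which is governed by the actual asymptotic density $\rho_a:=\limsup L_k/k$ of ones; one must then choose $C$ compatible with $\rho_a$ so that either $R_a(\lambda^-)=+\infty$ by non-decay of the general term at the radius, or a direct estimate bounds $R_a(\lambda^-)$ from below by $1$ using the full strength of the $L$-control. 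This is the principal obstacle of the proof. Once both $R_a(\lambda^-)>1\ge R_a(\lambda^+)$ are established, existence and uniqueness of $\lambda_a\in(\lambda^-,\lambda^+]$ follow immediately from strict monotonicity of $R_a$ and the intermediate value theorem, and the equality $\lambda_a=\lambda^+$ holds exactly when $a=2/3$ and $p=1/2$.
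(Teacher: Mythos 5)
There is a genuine gap, and it sits exactly where you flag ``the principal obstacle'': the lower bound $\lambda_a>p\,(q/p)^C$ in the case $p<1/2$ is never proved, and your framework cannot deliver it with the constant you actually establish. Your observation that $(\delta_k(x))$ has no two consecutive ones is correct and gives \eqref{eq: L_k - L_m <} with $C=1/2$, and your upper-bound computation at $\lambda^+=(p/a)\max(1,(q/p)^{1/2})$ (via $k_j\ge 2j$) is also fine. But the proposition requires one and the same $C$ in \eqref{eq: > lambda >} and \eqref{eq: L_k - L_m <}, and with $C=1/2$ the lower bound is simply false in general: e.g.\ for $a=2/3$ the maximal asymptotic frequency of ones along \emph{any} orbit is $1/3$ (period-three orbits realize it), so for small $p$ the series $R_a\big(p(q/p)^{1/2}\big)=\sum_j (q/p)^{\,j-(k_j+1)/2}$ converges and is $<1$, forcing $\lambda_a<p(q/p)^{1/2}$. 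Thus the ``refinement of $C$'' you defer is not an optional sharpening but the actual content of the proof: one must take $C$ dictated by the return times of the orbit of zero and then prove \eqref{eq: L_k - L_m <} for \emph{all} $x$ with that smaller $C$. That uniform statement is a Parry-type extremality property of the orbit of $0$ (the paper's Lemma~\ref{lem: frequency}, comparing the return times $t_k(x)$ with $t_k(0)$), combined with the structure of the domains of $T_a^n$ (Lemma~\ref{lem: dom T^n}) and a nontrivial reduction for points whose orbit returns to $[0,1)$ only finitely often (via nearby periodic points). None of this appears in your argument, and without it neither the lower bound nor, once $C$ is decreased below $1/2$, your upper-bound step for $q>p$ (which used $j\le C(k_j+1)$ with $C=1/2$) survives; the paper instead derives the upper bound from the refined inequality $\delta_nL_n(0)\le C(n+1)$.

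Concretely: your treatment of uniqueness, of the case $q\le p$ (where $R_a(p)>1$ and $R_a(p/a)\le \frac{a}{2(1-a)}\le 1$ via the digit identity, matching the paper), and of the strictness discussion is essentially the paper's easy half. What is missing is the dichotomy the paper runs at $\lambda=p(q/p)^C$ with $C=1/d$ or $1/d_{n_0}$ (either the series diverges there because $t_k\le k/C$ for all $k$, or a single term already equals $1$), together with the transfer of that same $C$ to all starting points $x$. Until you supply an analogue of Lemma~\ref{lem: frequency} (or some other mechanism tying the admissible $C$ in \eqref{eq: L_k - L_m <} to the radius-of-convergence exponent of $R_a$), the statement is not proved for $p<1/2$ with $\varkappa_a=\infty$.
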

The choice of $C$ is rather explicit. For example, it is easy to see from the proof that we can take $C=1/3$ when $a=2/3$. This gives an upper bound for the limiting frequency of $1$'s in the $\frac32$-expansions, and this bound is sharp (as there are cycles of length $3$).  The set of all limit frequencies in $\beta$-expansions is described in Example 47 by Boyland et.\ al~\cite{BCH16}.

It takes us much more effort to prove Proposition~\ref{prop: p < 1/2} when $\varkappa_a = \infty$ and $q > p$. The key to our proof is the following assertion, where we essentially compare the (inverse) number of $0$'s among the first digits of the $(\frac{1}{a}, \frac12 )$-expansions of $0$ and of an arbitrary $x \in [0,1)$. This is in the spirit of Theorem~1 by Parry~\cite{Parry60}. To state our result, denote by $\sigma_a(x)$  the total number of returns to $[0,1)$ of the trajectory of $x$ under $T_a$ and by $t_k(x)$ the corresponding return times:
\[
\sigma_a(x)= \sum_{n=1}^{\varkappa_a(x)} \ind \{T^n_a (x)<1\}
\]
and
\[
t_k(x)= \inf \{n> t_{k-1}(x) : T^n_a (x)<1\}, \qquad 1 \le k < \sigma_a(x) +1,
\]
where $t_0(x)=0$. Put $\sigma_a=\sigma_a(0)$ and $t_k=t_k(0)$. 

\begin{lemma} \label{lem: frequency}
Let $a \in (\frac12, \frac23]$, and denote 
\[d_n=
\begin{cases}
t_1, & n=0, \\
\min \big\{\frac{t_1+1}{1}, \ldots, \frac{t_n +1}{n}, \frac{t_{n+1}}{n+1} \big \}, & 1 \le n < \sigma_a, \\
\min \big\{\frac{t_1+1}{1}, \ldots, \frac{t_n +1}{n}\big \}, & n =\sigma_a \text{ and } \sigma_a < \infty.\\
\end{cases}
\] 
Then for any $x \in [0,1)$ and integers $k, n$ such that $0 \le n \le \sigma_a$ and $0 \le k \le \sigma_a(x)$, we have $d_n \ge t_1$ and
\begin{equation} \label{eq: t_k >}
t_k(x) \ge k d_n - \ind\{n \neq 0\}.
\end{equation}
\end{lemma}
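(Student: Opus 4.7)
The plan is to combine a pointwise monotonicity principle for orbits of $T_a$ with a windowing argument over returns. The key observation is that both branches of $T_a$, namely $y\mapsto(y+1)/a$ and $y\mapsto(y-1)/a$, are strictly increasing; so if two orbits apply the same sequence of branches, their ordering is preserved. For any $y\in[0,(2a-1)/(1-a)]$ whose orbit avoids $I_a$, one has $T_a^i(y)\ge T_a^i(0)$ for $0\le i\le t_1$: at each $1\le i<t_1$, orbit of $0$ is $\ge 1$ by definition of $t_1$, so orbit of $y\ge 1$ as well, forcing both orbits to apply the second branch and preserving the ordering by induction. Since orbit of $y$ stays above orbit of $0\ge 1$ strictly before time $t_1$, it cannot return earlier, and we deduce $t_1(y)\ge t_1$.

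The bound $d_n\ge t_1$ follows by applying this comparison after each return of orbit of $0$: since $T_a^{t_j}(0)\in[0,(2a-1)/(1-a)]$, the next inter-return time is bounded below by $t_1$, so $t_{j+1}-t_j\ge t_1$ and hence $t_j\ge j t_1$ by induction. This gives $(t_j+1)/j>t_1$ for $1\le j\le n$ and $t_{n+1}/(n+1)\ge t_1$, so $d_n\ge t_1$; the case $n=0$ is by definition. The same comparison applied at each return of orbit of $x$ proves the main bound for $n=0$: $t_i(x)-t_{i-1}(x)\ge t_1$, hence $t_k(x)\ge k t_1 = k d_0$.

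For $n\ge 1$, the desired inequality $t_k(x)\ge k d_n - 1$ rewrites as $(t_k(x)+1)/k\ge d_n$. Since $d_n=\min_j r_j$ with $r_j=(t_j+1)/j$ for $1\le j\le n$ and $r_{n+1}=t_{n+1}/(n+1)$, it suffices to prove $(t_k(x)+1)/k\ge r_j$ for each $j$ entering the minimum. The strategy is a window-of-$j$-returns decomposition: partition the first $k$ returns of orbit of $x$ into $\lfloor k/j\rfloor$ consecutive groups of $j$ returns each, plus a remainder of fewer than $j$ returns, and argue via a window-level extension of the pointwise comparison that each $j$-window takes cumulative time at least $t_j+1$, with possibly one exceptional window contributing merely $t_j$; this exceptional window accounts for the $-1$ slack in the final bound. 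For the term $r_{n+1}=t_{n+1}/(n+1)$ (which has no ``$+1$''), a similar block argument yields $t_k(x)\ge k t_{n+1}/(n+1)$ directly, which is stronger than needed.

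The principal obstacle is this window-level comparison. A naive iteration of the basic comparison over $j$ returns yields only $\ge jt_1$, which is strictly weaker than $t_j+1$ in general, because orbit of $y$ beyond its own first return may desynchronize from orbit of $0$: the two orbits can apply different branches of $T_a$ at intermediate times, and the pointwise ordering is then temporarily lost. The required refinement is to show that whenever orbit of $x$ restarts from any point $y=T_a^{t_i(x)}(x)\ne 0$, the first $j$ return times of orbit of $y$ strictly exceed those of orbit of $0$ by at least $+1$. Establishing this quantitative overshoot under possible desynchronization will require an inductive comparison that tracks the position of orbit of $y$ relative to the corresponding return position of orbit of $0$, leveraging the strictness $y>0$ (together with monotonicity of the two branches) to propagate the $+1$ gap through the entire $j$-window.
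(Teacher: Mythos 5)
There is a genuine gap, and it sits exactly where you park it: the ``window-level comparison'' is the whole content of the lemma, and the refinement you propose to prove it with is false. You want to show that whenever the orbit of $x$ restarts from a return point $y=T_a^{t_i(x)}(x)\neq 0$, its first $j$ return times exceed those of the orbit of $0$ by at least $1$. But a return is a hit of the set $[0,1)$, which has positive length, so strict domination of positions never forces a strict overshoot of return times: for $y>0$ small (or for any $y$ whose itinerary shadows that of $0$), the return times coincide with $t_1,\dots,t_j$ exactly, for as many returns as you like. Consequently the per-window accounting ``each $j$-window costs at least $t_j+1$, with at most one exceptional window costing $t_j$'' cannot hold: in the fully synchronized regime every window can cost exactly $t_j$ (e.g.\ $x$ close to $0$ with inter-return times of the orbit of $0$ all equal to $t_1$), and then $t_k(x)=kt_j/j$ falls short of $k(t_j+1)/j-1$ as soon as there are two such windows. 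Your reduction is also logically off: to get $(t_k(x)+1)/k\ge d_n=\min_j r_j$ you only need to beat \emph{one} term of the minimum, whereas beating \emph{each} term is both unnecessary and false --- for $j=1$ it would force $t_k(x)\ge k(t_1+1)-1$, which already fails at $x=0$ in the example above, and the side claim $t_k(x)\ge k\,t_{n+1}/(n+1)$ fails at $k=1$, $x=0$ whenever $t_2>2t_1$.

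The paper's proof is organized precisely to avoid this trap. It inducts on $k$ and tracks $m$, the largest index such that $t_j(x)=t_j$ for all $j\le m$, i.e.\ the length of exact synchronization of return \emph{times}; while synchronized, monotonicity of the two branches gives $T_a^{t_i}(0)\le T_a^{t_i}(x)<\frac{2a-1}{1-a}$. If $m=k$ one is done directly from the definition of $d_n$. Otherwise, at the first desynchronization either $m\ge n$, in which case the term $t_{n+1}/(n+1)$ of $d_n$ (the one deliberately included \emph{without} a $+1$) gives $t_{m+1}(x)\ge t_{m+1}\ge (m+1)d_n$, or $m<n$ and desynchronization yields the single overshoot $t_{m+1}(x)\ge t_{m+1}+1\ge(m+1)d_n$; one then restarts from $T_a^{t_{m+1}(x)}(x)$ and applies the induction hypothesis, which is where the one-time slack $-\ind\{n\neq0\}$ enters. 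So the $+1$ is harvested at most once along the whole orbit, not once per window; which term of the minimum you use depends on how long the synchronization lasts, and this case split is the missing idea in your proposal. Your preliminary steps ($d_n\ge t_1$, the $n=0$ case, and the basic monotone comparison) are fine and coincide with the paper's.
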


\begin{proof}
Assume that $x < \frac{2a-1}{1-a}$, otherwise \eqref{eq: t_k >} is trivial. We prove by induction in $k$ for each fixed $n$. In the basis case $k=0$, inequality~\eqref{eq: t_k >} is trivial. 

We now prove the step of induction. Put 
\[
m= \max\{0 \le i \le \min(k, n): t_j(x)=t_j\text{ for all }j\le i\}
\]
and
\[
m'=\min(m, \sigma_a-1).
\]
If $m=k$, then \eqref{eq: t_k >} immediately follows from the definition of $d_n$. From now on, we assume that $m < k$, and we claim that
\begin{equation} \label{eq: 2nd induction}
T_a^{t_i}(0) \le T_a^{t_i}(x)=T_a^{t_i(x)}(x)<\frac{2a-1}{1-a}, \qquad 0 \le i \le m'.
\end{equation}
We prove this by induction, assuming that $n \ge 1$, otherwise there is noting to prove. In the basis case $i=0$ this is true by the assumption. If we already established this for all $i \le j$ for some $j < m'$, then $ 1 \le T_a^{t_j+1}(0) \le T_a^{t_j +1}(x)$ since $T_a$ increases on the interval $[0, \frac{2a-1}{1-a})$. Then $T_a^{\ell}(0) \le T_a^{\ell}(x)$ for all $t_j+1 \le \ell \le t_{j+1}$ since $T_a$ increases on $[1, \frac{1}{1-a}]$. Moreover, $T_a^{t_{j+1}}(x) = T_a^{t_{j+1}(x)}(x) < \frac{2a-1}{1-a}$ since $j +1  \le m' < \min(\sigma_a (x), \sigma_a)$. This establishes \eqref{eq: 2nd induction}. 

Furthermore, by the same reasoning, it follows from \eqref{eq: 2nd induction} that
$t_{m'+1}(x) \ge t_{m'+1}(0)$ and 
\begin{equation} \label{eq: final}
T_a^{t_{m'+1}}(0)  <1  \ \text{ and } \ T_a^{t_{m'+1}}(0) \le T_a^{t_{m'+1}}(x) .
\end{equation}
Consider three cases. If $m'=n$, then $n< \sigma_a$, and by the definition of $d_n$,
\[
t_{m'+1}(x) \ge t_{m'+1}(0) \ge (m'+1) d_n. 
\]
If $m'<n$ and $m'=m$, then $m < \sigma_a$ and $m'=m<n$,  and $t_{m'+1}(x) = t_{m'+1}(0)$ is impossible by the definition of $m$. Hence by the definition of $d_n$,
\[
t_{m'+1}(x) \ge t_{m'+1}(0) +1 \ge (m'+1) d_n. 
\]
The remaining case $m'<n$ and $m'=\sigma_a-1<m$ is impossible, because otherwise $m=n = \sigma_a$ and it follows from the definition of $m$ and 
inequalities \eqref{eq: final} that $\frac{2a-1}{1-a} < T_a^{t_m}(x) <1$. Thus, $\sigma_a(x)=m$, contradicting our earlier assumption that $m<k$.

Thus, in all possible cases we have $t_{m'+1}(x) \ge (m'+1) d_n$,  where $1 \le m' +1 \le k$. Consequently, 
\[
t_k(x) = t_{m'+1}(x)+ t_{k-m'-1}(T_a^{t_{m'+1}(x)}(x)) \ge (m'+1) d_n + t_{k-m'-1}(T_a^{t_{m'+1}(x)}(x)).
\]
Therefore, if we already proved \eqref{eq: t_k >} for all $k \le j$ some some $0 \le j < \sigma_a(x)$, then 
\[
t_{j+1}(x) \ge  (m'+1) d_n + (j+1-m'-1) d_n - \ind \{ n \neq 0\}= (j+1)d_n  - \ind \{ n \neq 0\}.
\] 
This finishes the proof of \eqref{eq: t_k >}. In particular, \eqref{eq: t_k >} implies that $t_k \ge k d_0 = k t_1$ for all integer $1 \le k \le \sigma_a(x)$. This in turn implies that $d_n \ge t_1$  for  all integer $0 \le n \le \sigma_a $, as needed. 
\end{proof}

We will also need the following result, where we list properties of the iterates of $T_a$ for a fixed $a$ and describe their domains, i.e.\ the sets $\{x: \varkappa_a(x)\ge k\} $.

\begin{lemma} \label{lem: dom T^n}
Let $k \ge 1$ be an integer. Define
\[
G_k= \bigcup_{n=0}^k T_a^{-n}(0) \cap \{x: \varkappa_a(x)\ge k\} \quad \text{and} \quad  D_k= \bigcup_{n=0}^{k-1} T_a^{-n} \left(\frac{2a-1}{1-a} \right) \cup \left \{ \frac{1}{1-a} \right \},
\]
and put $g_k(x)=\max\{y \in G_k: y \le x \}$.

a) For any $a \in (\frac12, \frac23)$, the following is true. The set $\{x: \varkappa_a(x)\ge k\} $ is a union of finitely many disjoint intervals $\{ [g_k(y),y]: y \in D_k\}$. The set of the left endpoints of these intervals is $G_k$. On each of these intervals, the functions $\delta_0(x), \delta_1(x), \ldots, \delta_{k-1}(x)$ are constant  and $T_a^1(x), \ldots, T_a^k(x)$ are continuous and strictly increasing.

b) The assertions of Part a) remain valid for $a=\frac23$ if we substitute the set of intervals by $\{ [g_k(y-),y): y \in D_k, y \neq 3\} \cup \{[g_k(3), 3]\}$.
\end{lemma}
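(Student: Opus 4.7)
I would proceed by induction on $k \ge 1$. For the base case $k=1$, a direct computation shows $\{x : \varkappa_a(x) \ge 1\} = [0, \frac{2a-1}{1-a}] \cup [1, \frac{1}{1-a}]$ (with the first interval interpreted as $[0,1)$ when $a = \frac23$). The equation $T_a(y) = 0$ has only $y=1$ as its root in $[0, \frac{1}{1-a}]$, so $G_1 = \{0,1\}$ and $D_1 = \{\frac{2a-1}{1-a}, \frac{1}{1-a}\}$ match the two intervals, and on each $T_a$ is affine, continuous, strictly increasing, with $\delta_0$ constant.

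For the inductive step, I use
\[
\{x : \varkappa_a(x) \ge k+1\} = \{x : \varkappa_a(x) \ge k\} \cap \{x : T_a^k(x) \notin I_a\}
\]
and analyze each interval $I = [g_k(y), y]$ separately. By the inductive hypothesis, $T_a^k|_I$ is a strict-monotone homeomorphism onto $[L_y, R_y]$, where $L_y := T_a^k(g_k(y))$ and $R_y := T_a^k(y)$. The key observation is that $R_y = \frac{1}{1-a}$ for every $y \in D_k$: the point $\frac{1}{1-a}$ is a fixed point of $T_a$ via the right branch, and any $y \in T_a^{-n}(\frac{2a-1}{1-a})$ with $n < k$ satisfies $T_a^{n+1}(y) = \frac{1}{1-a}$ and then stays there. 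Moreover, since $g_k(y) \in T_a^{-n(y)}(0)$ for some $0 \le n(y) \le k$, we have $L_y = T_a^{k-n(y)}(0)$, a point on the orbit of zero.

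I then split into three cases according to the position of $L_y$. If $L_y \in [1, \frac{1}{1-a}]$, the interval $I$ survives intact at level $k+1$. If $L_y \in [0, \frac{2a-1}{1-a}]$, $I$ splits into $[g_k(y), y'']$ and $[y', y]$, where $y''$ and $y'$ are the unique points in $I$ mapped by $T_a^k$ to $\frac{2a-1}{1-a}$ and $1$ respectively. If $L_y \in I_a$ (which forces $\varkappa_a(g_k(y)) = k$ exactly, so the left endpoint is lost at level $k+1$), only $[y', y]$ survives. In each case the new right endpoints lie in $D_{k+1}$ (inherited from $D_k$ or, for $y''$, a fresh preimage of $\frac{2a-1}{1-a}$ under $T_a^k$), and any new left endpoint $y'$ lies in $G_{k+1}$ since $T_a^{k+1}(y') = T_a(1) = 0$. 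The identification via $g_{k+1}$ is correct because any point of $T_a^{-(k+1)}(0) \setminus G_k$ inside $I$ must satisfy $T_a^k(z) = 1$, forcing $z = y'$ by strict monotonicity. Strict monotonicity and continuity of $T_a^{k+1} = T_a \circ T_a^k$ on each new piece, and constancy of $\delta_k$, follow because $T_a^k$ maps each new piece into a single monotone branch of $T_a$.

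Part (b) requires adapting to $a = \frac23$, where the hole $I_a$ is empty but $T_a$ jumps at $x = 1$ with $T_{2/3}(1) = 0$; the natural branches are $[0,1)$ and $[1,3]$, producing half-open intervals $[g_k(y-), y)$ except at $y = 3$ which stays closed (since $3$ is the right-hand fixed point). The inductive step proceeds identically once these one-sided conventions are built into the splitting rule. The main obstacle, in my view, is the careful bookkeeping at split points: verifying in case (iii) that losing the left endpoint $g_k(y)$ does not create a gap with the previous interval, and in part (b) threading the one-sided limits at $x=1$ and all its iterated preimages through every level without introducing spurious intervals or mis-identifying $g_k$ on opposite sides of a jump.
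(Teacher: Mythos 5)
Your plan follows essentially the same route as the paper's proof: induction on $k$, the fixed-point fact that $T_a^k(y)=\frac{1}{1-a}$ (resp.\ $T_{2/3}^k(y-)=3$) for $y\in D_k$, a three-way case split on the position of $T_a^k$ at the left endpoint of each interval, identification of the new endpoints with $T_a^{-k}\bigl(\frac{2a-1}{1-a}\bigr)\subset D_{k+1}$ and $T_a^{-(k+1)}(0)\subset G_{k+1}$, and the half-open modification at $a=\frac23$. The details you flag (uniqueness of split points via strict monotonicity, constancy of $\delta_k$ on each piece, the one-sided conventions in part (b)) are exactly the points the paper's proof verifies, so the proposal is correct and essentially identical in approach.
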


\begin{proof}
a) We argue by induction. In the basis case $k=1$ we have $D_1=\{\frac{2a-1}{1-a},\frac{1}{1-a}\}$, hence the set $\{x: \varkappa_a(x)\ge 1\} $, which is $[0,\frac{1}{1-a}] \setminus I_a$, is of the form stated. The claims concerning $\delta_0(x)$ and $T_a(x)$ are clearly satisfied. 

Assume that the statements are proved for all $1 \le k \le n$. Fix a $y \in D_n$ and consider three cases.

If $T_a^n(g_n(y)) \ge1$, then $\{x  \in [g_n(y),y]: T_a^n(x) \not \in I_a \}=[g_n(y),y]$, and this is a maximal  interval contained in $\text{dom}(T^{n+1}_a)$. We have $g_n(y) \in G_{n+1}$ by $G_n  \cap \{x: \varkappa_a(x) \ge n+1\} \subset G_{n+1}$ and the fact that $\varkappa_a(y)\ge n+1$, and also $y \in D_{n+1}$ by $D_n \subset D_{n+1}$. The interval $(g_n(y),y)$ contains no points of $G_{n+1}$ since it contains no points of $G_n$ and $T_a^{n+1}(y) \neq 0$  for every $x\in (g_n(y),y)$ by  $T_a^n(x)>T_a^n(g_n(y)) \ge 1$. Hence
$g_{n+1}(y)=g_n(y)$.

If $\frac{2a-1}{1-a} < T_a^n(g_n(y)) < 1$, then by $T_a^n(y) = \frac{1}{1-a} >1$, there is a unique $z \in [g_n(y),y]$ such that $T_a^n(z)=1$.
Then $z \in G_{n+1}$ by $T_a^{n+1}(z)=T_a(1)=0$, and $ y \in D_{n+1}$. By the same argument as in the previous case, $[z,y]$ is the maximal subinterval of $[g_n(y),y]$ contained in $\text{dom}(T^{n+1}_a)$, hence $g_{n+1}(y)=z$.

Lastly, if $0 \le T_a^n(g_n(y)) \le \frac{2a-1}{1-a}$, then by $T_a^n(y) >1$, there exist unique $z_1, z_2 \in [g_n(y),y]$ such that $T_a^n(z_1) = \frac{2a-1}{1-a}$ and $T_a^n(z_2)=1$. Similarly to the previous cases, $[g_n(y),z_1]$ and $[z_2, y]$ are  the maximal subintervals of $[g_n(y),y]$ contained in $\text{dom}(T^{n+1}_a)$, and there are no other ones. It holds $z_1, y \in D_{n+1}$ and $g_n(y), z_2 \in D_{n+1}$. We also have $g_{n+1}(z_1)=g_n(y)$, $g_{n+1}(y)=z_2$. 

The above consideration of the three cases combined with the representation
\[
\{x: \varkappa_a(x)\ge n+1\}  = \{x: \varkappa_a(x)\ge n, T_a^n(x) \not \in I_a \} =\bigcup_{z \in D_n} \{x  \in [g_n(z),z]: T_a^n(x) \not \in I_a \},
\]
imply that  the set $\{x: \varkappa_a(x)\ge n+1\}$ is a union of the intervals $\{[g_{n+1}(z),z]: z \in D_{n+1}\}$  and  moreover, we have
\[
\{g_{n+1}(z): z \in D_{n+1}\}=\{g_n(z): z \in D_n, \varkappa_a(z) \ge n+1\} \cup T_a^{-n}(1).
\] 
Hence $G_{n+1} = \{g_{n+1}(z): z \in D_{n+1}\}$ by the assumption of induction and the fact that $T_a^{-n}(1)=T_a^{-(n+1)}(0)$. The set $D_{n+1}$ is finite because so is $D_n$. The intervals $\{[g_{n+1}(z),z]: z \in D_{n+1}\}$ are disjoint as subintervals of disjoint intervals $\{ [g_n(z),z]: z \in D_n\}$.

Furthermore, in either of the three cases,  the function $T_a^{n+1}$ is continuous and strictly increasing on $[g_{n+1}(y),y]$ as a composition of $T_a$ and $T_a^n$, which have these properties on $[1, \frac{1}{1-a}]$ and $[g_n(y),y]$, respectively. Then $\delta_n$ is constant on $[g_{n+1}(y),y]$. In the third case, $T_a^{n+1}$ and $\delta_n$ also have these respective properties on $[g_{n+1}(z_1), z_1]$ because on this interval $T_a^n$ does not exceed $\frac{2a-1}{1-a}$ and $T_a$ is continuous and strictly increasing on $[0, \frac{2a-1}{1-a}]$. 

b) The proof for $a=\frac23$ is analogous. It suffices to replace throughout $[g_n(y), y]$ by $[g_n(y-), y)$ for all $y \in D_n \setminus \{3\}$ and use that $T_{2/3}^n(y-)=3$ instead of $T^n_a(y) = \frac{1}{1-a}$.
\end{proof}

We are now ready to prove the main result of the section.

\begin{proof}[Proof of Proposition~\ref{prop: p < 1/2}]
Let us show that the equation $R_a(\lambda)=1$ has exactly one positive solution $\lambda=\lambda_a$. This is evident when $\varkappa_a$ is finite because in this case $R_a$ is a continuous strictly decreasing function on $(0, \infty)$ which satisfies $R_a(p)>1$ by $\delta_0=1$ and $R_a(+\infty) =0$. Then there is a unique solution $\lambda_a$, which satisfies $\lambda_a > p$. Moreover, this reasoning applies easily when $\varkappa_a = \infty$ and $q \le p$. In this case $R_a$ is finite on $(p, \infty)$ and satisfies $R_a(p)=R_a(p+)$ by the monotone convergence theorem, hence $1<R_a(p')< \infty$ for some $p'>p$. 

In the remaining case where $\varkappa_a=\infty$ and $p<1/2$, put  $r=q/p$. We will use the following representation
\[
R_a(\lambda) = \sum_{k=0}^{\varkappa_a} \delta_k (p/\lambda)^{k+1} (q/p)^{L_k}= \sum_{k=0}^{\sigma_a} (p/\lambda)^{t_k+1} r^k.
\]

We first assume that $\sigma_a = \infty$. Denote $d=\sup_{n \ge 0} d_n$ and $\theta=\liminf_{n \to \infty} t_n/n $. By the Cauchy--Hadamard formula, $R_a(\lambda)$ is finite for $\lambda >p$ when 
\[
r < \frac{1}{\limsup_{k \to \infty} (p/\lambda)^{(t_k+1)/k}} =\frac{1}{(p/\lambda)^\theta},
\]
 i.e.\ for $\lambda > p r^{1/\theta}$. Notice that $\theta \ge d \ge t_1$ by  Lemma~\ref{lem: frequency}.

If $d_n= \frac{t_{n+1}}{n+1}$ for all $n \ge 1$, then $d \ge t_n/n$ for all $n \ge 1$. Therefore, for $C=1/d$, we have $n \ge C t_n$ for all $n \ge 1$. Hence by $r>1$,
\[
R_a(pr^C) =\sum_{k=0}^\infty r^{-(t_k+1)C+k} \ge r^{-C} \sum_{k=0}^\infty r^0  =\infty.
\]
If there exists an $n_0 \ge 1$ such that $d_{n_0}< \frac{t_{n_0+1}}{n_0+1}$. Then $d_{n_0} = \frac{t_{k_0} +1}{k_0}$ for some $1 \le k_0 \le n_0$, hence for $C=1/d_{n_0}$,
\[
R_a(p r^C) >  r^{-(t_{k_0}+1)C+k_0} = r^0=1.
\]

In either case, we have $C \ge 1 /\theta$. Therefore, $1<R_a(p')< \infty$ for some $p'>p r^C$ by the same argument as above for $p\ge 1/2$, and there exists a unique solution $\lambda_a$ to $R_a(\lambda)=1$, which satisfies $\lambda_a > p' > p r^C$. 

If $\sigma_a$ is finite, then we have $R_a(p r^C)>1$ for $C=1/d_{\sigma_a}$ and $\lambda_a > p r^C$,  exactly as above.

We now prove inequality~\eqref{eq: L_k - L_m <}. Fix an $x \in [0, 1/(1-a)]$. 

First assume that $\sigma_a(x) = \infty$. Fix a $k \ge 0$ and put $x'= T_a^k(x)$ if $T_a^k(x)<1$ and $x'=T_a^{t_1(T_a^k(x))}$ otherwise. It follows from Lemma~\ref{lem: frequency} that $t_{\lceil Ck' \rceil}(x') \ge \lceil kC' \rceil /C -1\ge k'-1$ for all integer $k' \ge 1$ such that $1 \le \lceil Ck' \rceil \le \sigma_a(x')$. Substituting $n-k=k'-1$, we get
\[
L_n(x) - L_k(x)  = L_{n-k}(T_a^k(x)) \le L_{n-k}(x') \le L_{t_{\lceil Ck'\rceil}(x')}(x')  = \lceil C(n-k+1)\rceil
\]
for {\it all} integer $n \ge k$ since  $\sigma_a(x')=\sigma_a(x)=\infty$. Then  
\[
L_n(x) - L_k(x)  \le C(n-k+1)
\] 
because the left-hand side is integer. This proves inequality~\eqref{eq: L_k - L_m <} when $\sigma_a(x) = \infty$.

We now assume that $\sigma_a(x) <\infty$, which is possible only when $a <\frac23$. We argue by reduction to the previous case. Assume that $\varkappa_a(x)\ge 1$, otherwise there is nothing to prove. Fix an integer $1 \le n \le \varkappa_a(x)$. By Lemma~\ref{lem: dom T^n}.a, there exists a unique $y \in D_n$ such that $x \in [g_n(y), y]$. 

We first assume that $g_n(y) \neq y$. Since the functions $z \mapsto L_k(z)$ for $0 \le k \le n$ are constant  on $[g_n(y), y]$ by Lemma~\ref{lem: dom T^n}.a, inequality~\eqref{eq: L_k - L_m <} follows instantly from the result in the previous case if we prove that there is an $x' \in [g_n(y), y]$ such that $\sigma_a(x') =\infty$. We will use that there exist periodic orbits of $T_a$ starting arbitrarily close to the left of $y$. Consider two cases. If $T_a^m(y) = \frac{2a-1}{1-a}$ for a unique $0 \le m \le n-1$, then $T_a^m < \frac{2a-1}{1-a}$ on $[g_n(y), y)$ by Lemma~\ref{lem: dom T^n}.a. By piecewise continuity of $T_a$, the equation $z = T_a^{t_1(z)}(z)$ admits infinitely many solutions, which accumulate to $\frac{2a-1}{1-a}$ from the left. Pick any solution $z' \in [ T_a^m(g_n(y)), \frac{2a-1}{1-a})$, and then take the unique $x' \in [g_n(y), y]$ that solves $T_a^m(x')=z'$. In the second case when $y = \frac{1}{1-a}$, we argue similarly and take any $x' \ge g_n(y)$ that satisfies $x' = T_a^{t_1(x')+1}(x')$.

We now assume that $g_n(y) =y$. This is possible only when $G_n$ intersects $D_n$, that is when the orbit of zero hits $\frac{2a-1}{1-a}$. Then $T_a^{k_1}(x)=0$ and $T_a^{k_2}(x) = \frac{2a-1}{1-a}$ for the unique $k_1$, $k_2$ such that $0 \le k_1 < k_2 \le n-1$. Thus, $x \in G_{k_2}$ and by Lemma~\ref{lem: dom T^n}.a, there exists a unique $y' \in D_{k_2}$ such that $g_{k_2}(y')=x$. In other words, $\varkappa_a(z) = k_2$ for every $z \in (g_{k_2}(y'), y')$. Clearly, $g_{k_2}(y') \neq y'$, and we can apply the result of the previous case for all $k$ satisfying $0 \le k \le k_2$. Therefore, since the functions $z \mapsto L_k(z)$ for $0 \le k \le k_2$ are constant  on $[g_{k_2}(y'), y']$,
\[
L_n(x) - L_k(x)= L_{k_2}(y') + 1 - L_k(y') \le C (k_2 - k+1) +1 \le C (n - k+1) + 1.
\]
On the other hand, $L_n(x) - L_k(x)=0$ for all $k$ satisfying $k_2 +1 \le k \le n$. This finishes the proof of~\eqref{eq: L_k - L_m <}. Notice that in the last case $\delta_n (x)=0$ for all $n \ge k_2+1$, therefore we actually proved that 
\begin{equation} \label{eq: L_n < better}
\delta_n (x) L_n(x) \le C (n+1), \qquad 0 \le n \le \varkappa_a(x).
\end{equation}

Finally, we prove the upper bound in \eqref{eq: > lambda >}. For $p\ge 1/2$, we have $r \le 1$, and we use the $(\frac1a, \frac12)$-expansion~\eqref{eq: expansion Parry} to estimate
\begin{align*}
R_a (p/a) &= \sum_{k=0}^{\varkappa_a} \delta_k a^{k+1} r^{L_k} \le \sum_{k=0}^\infty  (1- d_k(0)) a^{k+1} \\
&=\frac{a}{2(1-a)}+ \sum_{k=0}^\infty \Big (\frac12- d_k(0) \Big)a^{k+1} = \frac{a}{2(1-a)} \le 1.
\end{align*}
Hence $\lambda_a \le p/a$, and both inequalities above turn into equalities only when $a = 2/3$ and $p=1/2$. In this case $R_{2/3}(3/4)=1$ and thus  $\lambda_{2/3}(1/2) = 3/4$. 

For $p<1/2$, we use a similar bound based on \eqref{eq: L_n < better}:
\[
R_a ( p r^C/a ) = \sum_{k=0}^{\varkappa_a} \delta_k a^{k+1} r^{L_k - C (k+1)} \le \sum_{k=0}^\infty  (1- d_k(0)) a^{k+1} \le 1.
\]
The second inequality turns into an equality only in the case $a=2/3$, when the first inequality is strict since the orbit of zero under $T_{2/3}$ is not periodic. Hence $\lambda_a < p r^C/a$.


\end{proof}

\begin{remark} \label{rem: C}
For any $a \in (\frac12, \frac23]$ and $p\in (0, \frac12)$, inequalities~\eqref{eq: > lambda >} and \eqref{eq: L_k - L_m <} are satisfied with 
\[
C= C_a:=\min_{0 \le n \le \sigma_a \wedge (q/p)^{1/t_1}} 1/d_n.
\] 

This follows exactly as in the proof of Proposition~\ref{prop: p < 1/2} except for a slight modification of the argument for  the lower bound in~\eqref{eq: > lambda >}. Namely, if there exist integers $1 \le k_0 \le n_0 \le\sigma_a \wedge r^{1/t_1}$ such that $d_{n_0}= \frac{t_{k_0}+1}{k_0}$, then $C \le 1/d_{n_0}$, and we get
\[
R_a(p r^C) \ge R_a(p r^{1/d_{n_0}}) >  r^{-(t_{k_0}+1)C+k_0} =1.
\]
Otherwise, $d_n= \frac{t_{n+1}}{n+1}$ for all $0 \le n \le r^{1/t_1}$ and $\sigma_a >r^{1/t_1}$, hence $n+1 \ge t_{n+1}/d_n \ge C  t_{n+1}$ for such $n$, and we get
\[
R_a(pr^C) =\sum_{n=0}^{\sigma_a} r^{-(t_n+1)C+n} \ge r^{-C} \sum_{n=0}^{[r^{1/t_1}]+1} r^0  \ge r^{-1/d_0} ([r^{1/t_1}]+2)>1.
\]  
\end{remark}

\section{Spectral properties of the killed transition operator on $U$}
\label{sec: P on U}
In this section we establish the tail asympotics of $\tau$, given in \eqref{eq: lambda}. To do this, we first provide a simple representation of the action of the killed transition operator $P$ on the space $U$ of functions defined in \eqref{eq: U}. To this end, let us introduce the following notation.

Denote by $\V[f]$ the total variation of a function $f$ on $[0, 1/(1-a)]$. Then
\begin{equation} \label{eq: BV norm}
\|f\|=|f(0)|+\V[f],
\end{equation}
the norm on the space of functions of bounded variation, is a norm on $U$. 

We can regard functions in $U$ as (right-continuous) distribution functions of finite signed atomic measures supported at $\{T^k_a(0): 0 \le k < \varkappa_a +1\}$, the orbit of zero. 
Then it is easy to see that the mapping $M: U \to \R^{\varkappa_a'+1}$, defined by
\[
(M f)_k = f(T_a^k(0)) - f(T_a^k(0)-), \qquad 0 \le k < \varkappa_a' +1,
\]
where we put $f(0-)=0$, is a bijective isometry between $(U, \| \cdot \|)$ and $(\R^{\varkappa_a'+1}, {\| \cdot \|}_1)$. 

Furthermore, let $A$ be the linear operator on $(\R^{\varkappa_a +1}, {\| \cdot \|}_1)$ defined by
\[
{(Au)}_0=p\sum_{k=0}^{\varkappa_a}\delta_ku_k\ \text{ and }\  
{(Au)}_k=c_{k-1}u_{k-1},\quad 1 \le k < \varkappa_a+1,
\]
where 
\[
\delta_k={\ind}\{T_a^k(0)<1\}\ \text{ and }\ 
c_k=q\delta_k+p(1-\delta_k),
\]
and let $v \in\R^{\varkappa_a +1}$ and $ v^* \in\R^{\varkappa_a' +1}$ be the vectors with coordinates
\[
v_k= (p/\lambda_a)^k (q/p)^{L_k},\quad 0 \le k < \varkappa_a + 1,
\]
and
\begin{equation} \label{eq: v^*}
v_k^*=\sum_{n=k}^{\varkappa_a} \delta_n (p/\lambda_a)^{n-k+1} (q/p)^{L_n-L_k},\quad 0 \le k < \varkappa_a' + 1.
\end{equation}
Notice that $\|v\|_1$ and $\|v^*\|_\infty$ are finite by $\lambda_a>p$ if $p \ge 1/2$ and by \eqref{eq: L_k - L_m <} if $p<1/2$. Notice also that $V(x)=\sum_{k=0}^{\varkappa_a} v_k \ind \{x \ge T^k_a(0)\}$
is exactly the function introduced in \eqref{eq: V}.

The main difficulty is when the orbit of zero under $T_a$ is infinite and aperiodic. In this case the operator $A$ acts on the space of infinite summable sequences $\ell_1$, and the adjoint operator $A^*$ on $\ell_\infty$ is defined by the standard duality $(\cdot, \cdot)$ between $\ell_1$ and $\ell_\infty$. If $\varkappa_a'<\infty$, we use $(\cdot, \cdot)$ to denote the scalar product on $\R^{\varkappa_a'+1}$.

\begin{proposition} \label{prop: A}
For any $a \in (\frac12, \frac23]$, the following is true depending on the type of the orbit of zero under $T_a$.

If the orbit is aperiodic (i.e.\ $\varkappa_a=\varkappa_a'$), then the restriction of the killed transition operator $P$ to $U$ is equivalent to A (i.e.\ $P= M^{-1} A M$). The quantity $\lambda_a$, defined in \eqref{eq: lambda}, is the leading eigenvalue of $A$ (i.e.\ every other eigenvalue $\lambda$ satisfies $|\lambda|< \lambda_a$), 
and $v$ and $v^*$ are the eigenvectors of $A$ and $A^*$  corresponding to $\lambda_a$, respectively. 
Moreover, there exist constants $C_1>0$ and $\gamma \in (0,1)$ such that for every $u \in (\R^{\varkappa_a'+1}, {\| \cdot \|}_1)$ and integer $n \ge 0$,
\begin{equation} \label{eq: A^n <}
\left\|\lambda_a^{-n} A^nu - \frac{(u, v^*)}{(v,v^*)} v \right\|_1
\le C_1 \gamma^n{\|u\|}_1.
\end{equation}

If the orbit  is eventually periodic, then the restriction of $P$ to $U$ is equivalent to the linear  operator $\widehat A$ on $\R^{\varkappa_a' +1}$ defined by
\[
{(\widehat A u)}_k={(Au)}_k + c_{\varkappa_a'} u_{\varkappa_a'} \ind\{k = k_0\}, \qquad 0 \le k \le \varkappa_a',
\]
where $k_0$ is the smallest integer such that $T_a^{k_0}(0)=T_a^{\varkappa_a'+1}(0)$. 
Then $\lambda_a$ is leading eigenvalue of $\widehat A$, and $Mv$ and $v^*$ are the eigenvectors of $\widehat A$ and ${\widehat A}^*$  corresponding to $\lambda_a$, respectively. 
Relation~\eqref{eq: A^n <} remains true when $A$ is replaced by $\widehat{A}$ and $v$ is replaced by $MV$.
\end{proposition}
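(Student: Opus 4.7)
The plan is to establish the three assertions in turn: the matrix representation of $P$ on $U$, the eigenvalue identities for $\lambda_a$, and finally the geometric convergence rate \eqref{eq: A^n <}, which is where quasi-compactness enters.

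\textbf{Step 1 (representation).} For an arbitrary $f = \sum_k u_k \ind\{x \ge T_a^k(0)\} \in U$ I would compute $Pf(x) = p f(ax+1) + q f(ax-1) \ind\{x \ge 1/a\}$ jump by jump. Under $x \mapsto ax+1$, each indicator $\ind\{x \ge (T_a^k(0)-1)/a\}$ is identically one on $[0, 1/(1-a)]$ when $T_a^k(0) < 1$ (so $\delta_k = 1$ and the term only adds $p u_k$ to the constant part), and equals $\ind\{x \ge T_a^{k+1}(0)\}$ when $T_a^k(0) \ge 1$ (upper branch of $T_a$). Under $x \mapsto ax - 1$ restricted to $x \ge 1/a$, jumps with $T_a^k(0) \le (2a-1)/(1-a)$ (where $\delta_k = 1$ and $k < \varkappa_a$) move to $T_a^{k+1}(0)$ via the lower branch with weight $q$, while jumps with $T_a^k(0) \ge 1$ produce images $(T_a^k(0)+1)/a > 1/(1-a)$ lying outside the state space and contribute nothing. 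Collecting, $Pf$ has jump $c_{k-1} u_{k-1}$ at $T_a^k(0)$ for $1 \le k \le \varkappa_a$ and constant part $p \sum_k \delta_k u_k$, which is precisely $AMu$. In the eventually periodic case the shifted jump at $T_a^{\varkappa_a'+1}(0) = T_a^{k_0}(0)$ collides with a jump already present there, and summing the two contributions produces the extra term $c_{\varkappa_a'} u_{\varkappa_a'}$ in $\hat A$.

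\textbf{Step 2 (eigenvalues).} The zeroth coordinate of $Av = \lambda_a v$ reads $p \sum_k \delta_k v_k = \lambda_a$, which is \eqref{eq: lambda} rescaled by $\lambda_a/p$; for $k \ge 1$, the identity $c_{k-1} v_{k-1} = \lambda_a v_k$ follows from $L_k = L_{k-1} + \delta_{k-1}$ by a two-line case analysis on $\delta_{k-1} \in \{0, 1\}$. For the dual eigenvector, separating the $n = k$ term in \eqref{eq: v^*} gives the telescoping relation $v^*_k = p \delta_k/\lambda_a + (p/\lambda_a)(q/p)^{\delta_k} v^*_{k+1}$, which rearranges to $\lambda_a v^*_k = p \delta_k + c_k v^*_{k+1}$ (using $c_k = p(q/p)^{\delta_k}$) --- exactly the equation $(A^* v^*)_k = \lambda_a v^*_k$. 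Finiteness of $v \in \ell_1$, $v^* \in \ell_\infty$ and positivity of $(v, v^*)$ follow coordinate-wise from the bounds in Proposition \ref{prop: p < 1/2}.

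\textbf{Step 3 (quasi-compactness and convergence).} In the eventually periodic case $\hat A$ is a non-negative matrix of size $\varkappa_a' + 1$; its directed graph is strongly connected because the chain $0 \to 1 \to \cdots \to \varkappa_a' \to k_0$ sweeps every state and every $k$ with $\delta_k = 1$ has an edge to $0$ of weight $p$, while the self-loop $\hat A_{0,0} = p \delta_0 = p > 0$ gives aperiodicity in the matrix sense. Classical Perron--Frobenius then yields \eqref{eq: A^n <}. In the infinite case $\varkappa_a = \infty$ I would split $A = K + B$ on $\ell_1$ where $Ku = \bigl(p \sum_k \delta_k u_k, 0, 0, \ldots\bigr)$ is rank one (hence compact) and $B$ is the weighted backward shift $(Bu)_k = c_{k-1} u_{k-1}$. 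Iterating yields $\|B^n\|_{\ell_1} = \sup_{j \ge 0} p^n (q/p)^{L_{j+n} - L_j}$, which by the uniform frequency estimate \eqref{eq: L_k - L_m <} is $O\bigl((p \max(1, (q/p)^C))^n\bigr)$; the \emph{strict} lower bound in \eqref{eq: > lambda >} makes this $o(\lambda_a^n)$, so $A$ is quasi-compact with essential spectral radius strictly below $\lambda_a$. Finally, the rank-one projector $P_0 u := (u, v^*)v/(v, v^*)$ satisfies $P_0^2 = P_0$ and $A P_0 = P_0 A = \lambda_a P_0$ by the eigenvector identities, and positivity together with strict positivity of $v, v^*$ (a Krein--Rutman argument) force $\lambda_a$ to be simple and strictly dominant in modulus. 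Hence $A = \lambda_a P_0 + N$ with $N P_0 = P_0 N = 0$ and spectral radius $r(N) < \lambda_a$, which yields \eqref{eq: A^n <} with any $\gamma \in (r(N)/\lambda_a, 1)$. The main obstacle is the shift estimate: the inequality $\lambda_a > p \max(1, (q/p)^C)$ from Proposition \ref{prop: p < 1/2} is used in an essential way to separate the essential spectrum of $A$ from its leading eigenvalue, and without the strictness there the compactness approach would break down.
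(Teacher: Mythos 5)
Your Steps 1--2 reproduce the paper's representation of $P|_U$ and verify the eigenvector identities for $A$ by direct substitution, and your route to quasi-compactness in the aperiodic case --- writing $A=K+B$ with $K$ rank one and $B$ the weighted backward shift, so that $\|B^n\|_{\ell_1}=\sup_j p^n(q/p)^{L_{j+n}-L_j}$ is controlled by \eqref{eq: L_k - L_m <} and the strict inequality in \eqref{eq: > lambda >} --- is a genuinely cleaner way to locate the essential spectrum than the paper's explicit resolvent estimates \eqref{eq:A}--\eqref{eq:resolvent}. However, the decisive step is not justified as stated: ``positivity together with strict positivity of $v,v^*$ (a Krein--Rutman argument) force $\lambda_a$ to be simple and strictly dominant in modulus'' is false as a general principle --- a positive operator can have strictly positive right and left eigenvectors at $\lambda_a$ and still carry peripheral spectrum of the same modulus (already $\bigl(\begin{smallmatrix}0&1\\1&0\end{smallmatrix}\bigr)$ does), so some primitivity input and a theorem valid for quasi-compact positive operators on $\ell_1$ (e.g.\ Sasser \cite{Sasser64}, whose hypotheses must then be checked) are indispensable. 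The paper deliberately bypasses this: the recursion $c_{k-1}u_{k-1}=\lambda u_k$ forces every eigenvector to have the explicit form $u_k=u_0\,c_0\cdots c_{k-1}/\lambda^k$, hence every eigenvalue solves $R_a(\lambda)=1$; the triangle inequality \eqref{eq: other lambdas} then excludes $|\lambda|\ge\lambda_a$ for $\lambda\neq\lambda_a$, analyticity of $R_a$ (or, in your setup, the essential-radius bound) prevents accumulation at the circle $|\lambda|=\lambda_a$, and a Jordan block at $\lambda_a$ is excluded by pairing a putative generalized eigenvector with $v^*$. You have all these ingredients available, but the dominance and simplicity of $\lambda_a$ --- the heart of \eqref{eq: A^n <} --- is exactly what your one-line appeal leaves unproved.

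The eventually periodic case is also materially incomplete. Saying ``classical Perron--Frobenius yields \eqref{eq: A^n <}'' presupposes the identification of the Perron data of $\hat A$, which is precisely what the proposition asserts there: that the leading eigenvalue of the finite matrix $\hat A$ equals the root $\lambda_a$ of the infinite-series equation \eqref{eq: lambda}, with right and left eigenvectors $MV$ and $v^*$. Your Step 2 verifies the eigenvector identities only for the unfolded operator $A$; transferring them to $\hat A$ requires either the paper's resummation over the period (\eqref{eq:lambda_per}, \eqref{eq:2.7}, \eqref{eq:lead.eig}) or an explicit intertwining of $A$ with $\hat A$ through the folding map \eqref{eq: M explicit}, neither of which you supply. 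Finally, your irreducibility argument tacitly assumes that the cycle $\{k_0,\dots,\varkappa_a'\}$ contains a state with $\delta_k=1$; when the orbit of zero hits $\frac{2a-1}{1-a}$ the periodic part is the fixed point $\frac{1}{1-a}$ with $\delta=0$, there is no edge from that state back to $0$, and $\hat A$ is in fact reducible (the paper's own assertion is equally terse here), so in that degenerate subcase Perron--Frobenius for irreducible matrices cannot be invoked verbatim and the extra block, of spectral radius $c_{\varkappa_a'}=p<\lambda_a$, must be handled separately.
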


\begin{remark}\label{rem:compact}
Recall that a bounded linear operator $Q$ on a Banach space is called {\it quasi-compact} if there exists a compact operator $Q_c$ such that $\rho(Q) > \rho(Q-Q_c)$, where $\rho$ stands for the spectral radius. Assertion~\eqref{eq: A^n <} implies that $A$ and $\widehat A$ are quasi-compact.  
This implies that the operator $P$ is quasi-compact on $U$ for all $a \in (\frac12, \frac23]$. 
\end{remark}



We postpone the proof of Proposition~\ref{prop: A} and use this result to establish the tail asymptotics~\eqref{eq: main equiv} with the constant 
\begin{equation} \label{eq: const}
c=\left (1 + \sum_{k=1}^{\varkappa_a} k \delta_k (p/\lambda_a)^{k+1} (q/p)^{L_k} \right)^{-1}.
\end{equation}

We first assume that the orbit of zero is aperiodic, i.e.\ $\varkappa_a=\varkappa_a'$. 
We  compute the constant factor $(v,v^*)$ in \eqref{eq: A^n <} using the equality $\varkappa_a=\varkappa_a'$ and the definition~\eqref{eq: lambda} of $\lambda_a$:
\begin{align*}
(v, v^*)&= \sum_{k=0}^{\varkappa_a'}  v_k v_k^* = \sum_{k=0}^{\varkappa_a} \sum_{n=k}^{\varkappa_a} \delta_n (p/\lambda_a)^{n+1} (q/p)^{L_n}  = \sum_{n=0}^{\varkappa_a} (n+1) \delta_n (p/\lambda_a)^{n+1} (q/p)^{L_n}  = \frac{1}{c}.
\end{align*}

Now assume that the orbit of zero is eventually periodic. Relation \eqref{eq: A^n <} is valid with $A$ replaced by $\widehat{A}$ and $v$ replaced by $MV$. Let us compute the constant factor $(Mv,v^*)$ there. Put $n_0=\varkappa'-k_0+1$ and let $\bar v_k^*$ be defined as in~\eqref{eq: v^*} for {\it all} $k \ge 0$. The sequence $\{\delta_k\}$ is eventually periodic, hence so is $\{L_k\}$, and hence so is $\{\bar v_k^*\}$. Therefore, using equality~\eqref{eq: M explicit},
\[
(MV, v^*) = \sum_{k=0}^{k_0-1} v_k v_k^* + \sum_{k=k_0}^{\varkappa_a'} \sum_{m=0}^{\infty} v_{k+mn_0} v_k^* =
 \sum_{k=0}^{k_0-1} v_k \bar v_k^* + \sum_{k=k_0}^{\varkappa_a'} \sum_{m=0}^{\infty} v_{k+mn_0} \bar v_{k+mn_0}^*,
\]
hence $(MV, v^*)=(v, \bar v^*) = 1/c$ as in the aperiodic case.

To state \eqref{eq: A^n <}  in terms of the operator $P$, use that $P^nf = M^{-1} A^n Mf $. We have $M^{-1} v =V$ if the orbit of zero is aperiodic, and $M^{-1}(M V) = V$ if it is eventually periodic. Therefore,~\eqref{eq: A^n <} directly implies that for every $a \in (\frac12, \frac23]$, we have
\begin{equation}
\label{eq:PF-bound}
\|\lambda_a^{-n}P^nf - c (Mf,v^*) V \|\le C_1 \gamma^n\|f\|,\quad f\in U.
\end{equation}
Clearly, $\lambda_a$ is the leading eigenvalue of $P$ and the corresponding eigenfunction is $V$. Since 
\begin{equation} \label{eq: norms}
|f(x)| \le |f(0)| + |f(x)-f(0)| \le \|f\| 
\end{equation} 
for every $x \in [0, \tfrac{1}{1-a}]$, it follows from \eqref{eq:PF-bound} that
\begin{align}
\label{eq:PF}
\sup_{0 \le x \le \frac{1}{1-a}} \Big |\lambda_a^{-n}P^nf(x)- c (Mf,v^*) V(x) \Big |\le C_1 \gamma^n\|f\|,\quad f\in U.
\end{align}

Finally, we determine the tail asymptotics for $\tau$. We have $\pr_x(\tau>n) = P^n \ind(x)$ and the constant function $\ind(x)\equiv 1$ belongs to $U$. Since $M \ind =(1, 0, 0, \ldots)$, we get $(M \ind, v^*)=1$ by~\eqref{eq: v^*}, and the asymptotic relation~\eqref{eq: main equiv} follows from~\eqref{eq:PF}.


\begin{proof}[Proof of Proposition~\ref{prop: A}]
In the proof we shall omit the subscript $a$ in $T_a$, $\varkappa_a$, and $\varkappa_a'$.

For every function $f \in U$, which is of the form $f(x)=\sum_{k=0}^\varkappa u_k{\ind}\{x\ge T^k(0)\}$, one has
\begin{align*}
Pf(x)
&=pf(ax+1)+q{\ind}\{x\ge1/a\}f(ax-1)\\
&=p\sum_{k=0}^\varkappa u_k{\ind}\{ax+1\ge T^k(0)\}
+q{\ind}\{x\ge1/a\}\sum_{k=0}^\varkappa u_k{\ind}\{ax-1\ge T^k(0)\}\\
&=p\sum_{k=0}^\varkappa u_k{\ind}\{x\ge (T^k(0)-1)/a\}
+q{\ind}\{x\ge1/a\}\sum_{k=0}^\varkappa u_k{\ind}\{x\ge(T^k(0)+1)/a\}.
\end{align*}

Consider the first sum in the last line. We notice that for every $k<\varkappa$, it is true that
\begin{equation*}
\text{if }\ T^k(0)<1,\ \text{ then }\  {\ind}\{x\ge (T^k(0)-1)/a\}=1
\end{equation*}
and 
\begin{equation*}
\text{if }\ T^k(0)\ge1, \ \text{ then }\  
{\ind}\{x\ge (T^k(0)-1)/a\}={\ind}\{x\ge T^{k+1}(0)\}.
\end{equation*}
If $\varkappa$ is finite, then $T^\varkappa(0)\in (\frac{2a-1}{1-a},1)$ and therefore, $\delta_\varkappa =1$ and $(T^\varkappa(0)-1)/a<0$. Hence
\begin{equation}
\label{eq:sum1}
\sum_{k=0}^{\varkappa} u_k{\ind}\{x\ge (T^k(0)-1)/a\}
=\sum_{k=0}^{\varkappa} u_k\delta_k
+\sum_{k=0}^{\varkappa-1} u_k(1-\delta_k){\ind}\{x\ge T^{k+1}(0)\}.
\end{equation}

To rewrite the second sum, we notice that for every $k<\varkappa$,  it is true that
\begin{equation*}
\text{if }\ T^k(0)<1, \ \text{ then }\  
{\ind}\{x\ge (T^k(0)+1)/a\}={\ind}\{x\ge T^{k+1}(0)\}
\end{equation*}
and
\begin{equation*}
\text{if }\ T^k(0)\ge1, \ \text{ then }\ (T^k(0)+1)/a>1/(1-a)
\ \text{ and }\ {\ind}\{x\ge (T^k(0)+1)/a\}=0,
\end{equation*}
because in the case $a =2/3$, where $2/a=1/(1-a)$, we have $T^k(0) \neq 1$; see Section~\ref{sec:Operator}.
If $\varkappa$ is finite, then $T^\varkappa(0)\in (\frac{2a-1}{1-a},1)$ and therefore, $(T^\varkappa(0)+1)/a>1/(1-a)$. Hence
\begin{equation}
\label{eq:sum2}
{\ind}\{x\ge1/a\} \sum_{k=0}^{\varkappa} u_k{\ind}\{x\ge(T^k(0)+1)/a\}
=\sum_{k=0}^{\varkappa-1} u_k\delta_k {\ind}\{x\ge T^{k+1}(0)\}.
\end{equation}

Putting \eqref{eq:sum1} and \eqref{eq:sum2} together and using that $c_k=q\delta_k+p(1-\delta_k)$, we conclude that 
\begin{equation} \label{eq: Pf =}
Pf(x)
=\left(p\sum_{k=0}^\varkappa \delta_ku_k\right){\ind}\{x\ge T^{0}(0)\}
+\sum_{k=1}^\varkappa c_{k-1}u_{k-1}{\ind}\{x\ge T^{k}(0)\}.
\end{equation}

We now consider the two cases separately.

\underline{The orbit of zero is aperiodic}. Comparing the coefficients at ${\ind}\{x\ge T^{k}(0)\}$ in the equalities $f(x)=\sum_{k=0}^\varkappa u_k{\ind}\{x\ge T^k(0)\}$ and \eqref{eq: Pf =}, we see that the restriction of $P$ to the space $U$ is equivalent to the linear operator $A$ on $(\R^{\varkappa_a +1}, {\| \cdot \|}_1)$.

Consider  the eigenvalue problem $Au=\lambda u$. By the definition of $A$,
$$
p\sum_{k=0}^\varkappa \delta_ku_k=\lambda u_0
\quad\text{and}\quad 
c_{k-1}u_{k-1}=\lambda u_k, \quad  1 \le  k <  \varkappa +1.
$$
These equations imply that 
\begin{equation} \label{eq: eigenvector A}
u_k= u_0 \frac{c_0 c_1 \cdot \ldots \cdot c_{k-1}}{\lambda^k},\quad 1 \le k <  \varkappa +1,
\end{equation}
Since $\delta_0=1$, we see that there is a nontrivial solution only if $\lambda$ satisfies the equation
\begin{equation} \label{eq: eigenvalue A}
\lambda= p + p\sum_{k=1}^\varkappa \delta_k  \frac{c_0 c_1 \cdot \ldots \cdot c_{k-1}}{\lambda^k}.
\end{equation}
Recalling that $L_k = \sum_{i=0}^{k-1} \delta_i$ and $c_i=q\delta_i+p(1-\delta_i)$, we get
\begin{equation} \label{eq: c product}
 c_0 c_1 \cdot \ldots \cdot c_{k-1}= p^k (q/p)^{L_k}.
\end{equation}
Hence \eqref{eq: eigenvalue A} is equivalent to equation \eqref{eq: lambda}, which has a unique positive solution $\lambda_a >p$, as we showed in Section~\ref{sec: lambda}. 

Notice that in the case when $\varkappa_a = \infty$, the vectors $u$ defined by \eqref{eq: eigenvector A} with $\lambda=\lambda_a$ satisfy $u \in \ell_1$ by $\lambda_a>p$ if $p \ge 1/2$ and by \eqref{eq: L_k - L_m <} if $p<1/2$. Then it follows that the vector $v$, which is proportional to such $u$, is the eigenvector of $A$ corresponding to $\lambda_a$.

We now consider the eigenvalue problem $A^* u^* =\lambda_a u^*$. Invoking the definition of $A$, we get the following coordinate-wise equations:
\begin{equation} \label{eq: left ev}
\begin{array}{rlc}
&\lambda_a u_k^*=p\delta_k u_0^*+c_k u^*_{k+1}, &  0 \le k < \varkappa, \\
&\lambda_a u^*_{\varkappa} = p \delta_{\varkappa} u_0^* & \text{if } \varkappa< \infty.\\
\end{array}
\end{equation}
Setting
$$
V_k=u_k^*\frac{c_0c_1\ldots c_{k-1}}{\lambda_a^k},\quad 0 \le k < \varkappa+1,
$$
where $c_0c_1\ldots c_{-1} = 1$ by convention, we get the equations 
$$
\lambda_a V_k=\frac{c_0c_1\ldots c_{k-1}}{\lambda_a^k}p\delta_kV_0
+\lambda_aV_{k+1},\quad 0 \le k < \varkappa.
$$
Therefore,
$$
V_{k+1}-V_k=-\frac{c_0c_1\ldots c_{k-1}}{\lambda_a^{k+1}}
p\delta_kV_0,\quad 0 \le k < \varkappa,
$$
and using the definition of $\lambda_a$ and \eqref{eq: c product}, we obtain
\begin{align*}
V_{n+1}&=V_0+\sum_{k=0}^n(V_{k+1}-V_k)
=V_0\left(1-\sum_{k=0}^n\delta_k (p / \lambda_a)^{k+1} (q/p)^{L_k}\right)\\
&=V_0 \sum_{k=n+1}^\varkappa \delta_k (p / \lambda_a)^{k+1} (q/p)^{L_k} ,\quad 0 \le n < \varkappa.
\end{align*}
Consequently,
\[
u_{n+1}^*=\frac{\lambda_a^{n+1}}{c_0c_1\ldots c_n}V_{n+1}
= u_0^* \sum_{k=n+1}^\varkappa \delta_k (p / \lambda_a)^{k-n} (q/p)^{L_k-L_{n+1}} ,\quad 0 \le n < \varkappa.
\]
Notice that in the case of infinite $\varkappa$, the vectors $u^*$ satisfy $u^* \in \ell_\infty$ by $\lambda_a>p$ if $p \ge 1/2$ and by \eqref{eq: L_k - L_m <} if $p<1/2$. In the case of finite $\varkappa$, the equation in the second line of \eqref{eq: left ev} is satisfied since $\delta_\varkappa=1$. It follows that the vector $v^*$, which is proportional to such $u^*$, is the eigenvector of $A^*$ corresponding to $\lambda_a$.

Furthermore, we already know that there are no positive eigenvalues of $A$ other than $\lambda = \lambda_a$. Assume that $\lambda \in \C \setminus (0, \infty)$ is a different eigenvalue of $A$.
By \eqref{eq: eigenvalue A}, we have
\begin{equation} \label{eq: other lambdas}
|\lambda|-p<|\lambda-p|
=\left|p\sum_{k=1}^\varkappa \delta_k (p/\lambda)^k (q/p)^{L_k}\right|
\le p\sum_{k=1}^\varkappa \delta_k (p/|\lambda|)^k (q/p)^{L_k}.
\end{equation}
This implies that $R(|\lambda|)>1$, hence $|\lambda|<\lambda_a$. Thus, $\lambda_a$ is the leading eigenvalue of $A$. Denote $\varrho'=0$ if $A$ has no other eigenvalues, otherwise put
\[
\varrho'=\sup \{|\lambda|: \lambda \text{ is an eigenvalue of }A, \lambda \neq \lambda_a\}.
\]

Assume that $\varkappa$ is finite. It is easy to see that the matrix $\mathcal A$ that represents $A$ in the standard basis, has non-negative entries and is irreducible, that is for some $m \in \N$, all entries of  $\mathcal A+{\mathcal A}^2 + \ldots + {\mathcal A}^m$ are strictly positive. Then $\lambda_a$ is a simple root of the characteristic polynomial of $\mathcal A$ and ${\mathcal A}^n/\lambda_a^n \to v v^*/ v^* v$ (with $v^*$ regarded as a row vector) in the operator norm as $n \to \infty$ by the classical Perron--Frobenius theorem; see Meyer~\cite[p.~673 and Eq.~(8.3.10)]{Meyer00}. For the rate of this convergence, the spectral resolution theorem for $A^n$ implies the bound~\eqref{eq: A^n <} for any $\gamma \in (\varrho'/\lambda_a, 1)$; see Eq.~(7.9.9) and the last formula on p.~629 in~\cite{Meyer00}.



From now on we assume that the orbit of zero is infinite (and still aperiodic). Notice that $\varrho'< \lambda_a$, otherwise there is a convergent sequence of eigenvalues $\{\mu_n\}$ such that $|\mu_n| \to |\lambda_a|$. If $p\ge 1/2$, then $R$ is  analytic  on the set $D=\{\lambda \in \C:|\lambda|>p \}$, which contains $\mu_n$ for all $n$ large enough by $\lambda_a >p$. If $p<  1/2$, then it follows from \eqref{eq: L_k - L_m <} that $R$ is  analytic  on the set $D=\{\lambda \in \C:|\lambda|>p (q/p)^C \}$, which contains $\mu_n$ for all $n$ large enough since $\lambda_a >p (q/p)^C$  by \eqref{eq: > lambda >}. In either case it must be that $R \equiv 1 $ on $D$ by $R(\mu_n)\equiv 1$,  which is a contradiction.

We now show that the spectrum of $A$ without the point $\lambda_a$ is contained in the closed centred ball of radius 
$\varrho=\max(\varrho', p, p (q/p)^C)$, where $\varrho< \lambda_a$. Equivalently, the resolvent operator
$(A-\lambda I)^{-1}$ is bounded for every $\lambda\neq \lambda_a$ with $|\lambda|>\varrho$. 

We first prove that the equation $(A-\lambda I)u=w$ has a unique solution for every $w\in\ell_1$. Writing this equation coordinatewise, we have 
\begin{equation} \label{eq: resolvent}
\begin{array}{cl}
&(p-\lambda)u_0+p\sum_{k=1}^\infty\delta_ku_k=w_0,\\
&c_{k-1}u_{k-1}-\lambda u_k=w_k,\quad k\ge1.
\end{array} 
\end{equation}
It is easy to see that
\begin{equation}
\label{eq:uk}
u_k=\frac{c_0c_1\cdots c_{k-1}}{\lambda^k}u_0-
\sum_{j=1}^k w_j\frac{c_jc_{j+1}\cdots c_{k-1}}{\lambda^{k-j+1}}, \quad k \ge 1,
\end{equation}
is the unique solution to the equations in the second line of \eqref{eq: resolvent}. Therefore,
\begin{equation}
\label{eq:u0}
\left(p-\lambda+p\sum_{k=1}^\infty\delta_k\frac{c_0c_1\cdots c_{k-1}}{\lambda^k}\right)u_0
=w_0 + p\sum_{k=1}^\infty\delta_k
\sum_{j=1}^k w_j\frac{c_jc_{j+1}\cdots c_{k-1}}{\lambda^{k-j+1}},
\end{equation}
and thus $u_0$ is defined uniquely since $\lambda$ is not an eigenvalue of $A$, and therefore the factor of $u_0$ on the left-hand side is non-zero by~\eqref{eq: eigenvalue A}. 

We now estimate the norm of $u$. Assume first that $p \ge 1/2$. Then
\begin{align}
\label{eq:A}
\nonumber
\sum_{k=1}^\infty|u_k|
&\le |u_0|\sum_{k=1}^\infty\frac{c_0c_1\cdots c_{k-1}}{|\lambda|^k}
+\sum_{k=1}^\infty\sum_{j=1}^k|w_j|
\frac{c_jc_{j+1}\cdots c_{k-1}}{|\lambda|^{k-j+1}}\\
\nonumber
&\le |u_0|\sum_{k=1}^\infty\left(\frac{p}{|\lambda|}\right)^k
+\sum_{j=1}^\infty|w_j|\sum_{k=j}^\infty\frac{p^{k-j}}{|\lambda|^{k-j+1}}\\
&\le \frac{p|u_0|}{|\lambda|-p}+\frac{1}{|\lambda|-p}\sum_{j=1}^\infty|w_j|
\end{align}
and 
\begin{align}
\label{eq:B}
 \left|\lambda (R(\lambda)-1) \right||u_0|
 &\le |w_0|+p\sum_{k=1}^\infty\delta_k
 \sum_{j=1}^k|w_j|\frac{p^{k-j}}{|\lambda|^{k-j+1}} \le |w_0|+\frac{p}{|\lambda|-p}\sum_{j=1}^\infty|w_j|.
\end{align}
Consequently, for every $\lambda\neq\lambda_a$ with $|\lambda|>\varrho$ there exists a constant $C(\lambda)$ such that 
\begin{equation}
\label{eq:resolvent}
{ \|(A-\lambda I)^{-1}  \|}_1\le C(\lambda) {\|w\|}_1.
\end{equation}


Let us estimate the norm of $u$ in the case when $p < 1/2$. Using the first line in \eqref{eq:A} and recalling that 
$c_0c_1\ldots c_{j-1}=p^jr^{L_j}$ with $r=q/p$, we obtain
\begin{align*}
\sum_{k=1}^\infty|u_k|
&\le |u_0|\sum_{k=1}^\infty\frac{p^kr^{L_k}}{|\lambda|^k}
+\sum_{k=1}^\infty\sum_{j=1}^k\frac{p^{k-j}r^{L_k-L_j}}{|\lambda|^{k-j+1}}|w_j|\\
&=|u_0|\sum_{k=1}^\infty\frac{p^kr^{L_k}}{|\lambda|^k}
+\sum_{j=1}^\infty|w_j|\sum_{k=j}^\infty\frac{p^{k-j}r^{L_k-L_j}}{|\lambda|^{k-j+1}}
\end{align*}
Applying now \eqref{eq: L_k - L_m <}, we get 
\begin{align*}
 \sum_{k=1}^\infty|u_k|
&\le |u_0|r^{C+1}\sum_{k=1}^\infty\frac{p^k r^{Ck}}{|\lambda|^k}
+\frac{r^{C+1}}{|\lambda|}\sum_{j=1}^\infty|w_j|
\sum_{k=j}^\infty\left(\frac{pr^C}{|\lambda|}\right)^{k-j}\\
&\le |u_0|r^{C+1}\frac{pr^C}{|\lambda|-pr^C}
+r^{C+1}{\|w\|}_1\frac{1}{|\lambda|-pr^C}.
\end{align*}
Similar arguments lead to the following estimate:
\begin{align*}
|\lambda(R(\lambda)-1)||u_0|
&\le |w_0|+p\sum_{k=1}^\infty\delta_k\sum_{j=1}^k|w_j|\frac{p^{k-j}r^{L_k-L_j}}{|\lambda|^{k-j+1}}\\
&\le|w_0|+\frac{pr^{C+1}}{|\lambda|}\sum_{j=1}^\infty|w_j|
\sum_{k=j}^\infty\left(\frac{pr^C}{|\lambda|}\right)^{k-j}\\
&\le |w_0|+pr^{C+1}{\|w\|}_1\frac{1}{|\lambda|-pr^C}.
\end{align*}
We know from \eqref{eq: > lambda >} that $\lambda_a>pr^C$, hence \eqref{eq:resolvent} is also valid in the case $p<1/2$. Consequently, for every $p \in (0,1)$, the spectrum of $A$ without the point $\lambda_a$ is contained in the closed centred ball of radius $\varrho<\lambda_a$.

The property of the spectrum shown just above implies that the operator $A$ is quasi-compact. If one shows additionally that $A$ possesses certain additional positivity properties, then one can apply a Krein--Rutman type result, for example Theorems 6 and 7 in Sasser~\cite{Sasser64}. In our particular case we will arrive at the same conclusion by the direct calculations presented below, which are quite standard for the area of quasi-compact operators.

Define the linear operators $Qu=\frac{(u,v^*)}{(v,v^*)}v$ and $Bu=A(u-Qu)$ on $\ell_1$, where $(v,v^*) \ge v_0 v_0^*>0$ by \eqref{eq: v^*}. Clearly, $Q^2=Q$. Since $v$ and $v^*$ are the eigenvectors corresponding to $\lambda_a$, we have $AQu=\lambda_a Qu$ and
$$
QAu=\frac{(Au,v^*)}{(v,v^*)}v=\frac{(u,A^*v^*)}{(v,v^*)}v
=\lambda_aQu.
$$
Thus, $A$ and $Q$ are commuting and $QB=BQ=0$. These properties of the operators $A$, $B$, and $Q$ imply that 
\begin{equation} \label{eq: projection}
A^nu=\lambda_a^n Qu+B^nu,\quad u\in\ell_1,\ n\ge1.
\end{equation}
We claim that the spectral radius $\rho(B)$ of $B$ satisfies
$\rho(B)\le \varrho$. Then the estimate~\eqref{eq: A^n <} for any $\gamma \in (\varrho/\lambda_a, 1)$ follows follow from the representation \eqref{eq: projection} combined with Gelfand's formula $\rho(B)=\lim_{n \to \infty} \|B^n\|^{1/n}$.

To prove the claim, we consider the closed linear subspace
$$
Y=\{u\in\ell_1: (u,v^*)=0\}.
$$
Since $Bu\in Y$ for any $u\in\ell_1$, it suffices to consider the restriction of $B$ to this subspace, which we denote by $B_Y$. In fact, we have $\rho(B)=\rho(B_Y)$, where the inequality $\rho(B)\ge \rho(B_Y)$ is immediate from Gelfand's formula, and the reverse one follows from the said formula by
\[
\rho(B)= \lim_{n\to \infty} \sup_{u\in \ell_1: \|u\| \le 1} \| B^n u\|^{1/n} \le \lim_{n\to \infty} \sup_{v\in Y: \|v\| \le \| B\|} \| B^{n-1} v\|^{1/n} = \rho(B_Y).
\]

We first show that the spectrum of $B_Y$ is a subset of the spectrum of $A$. Let $\lambda$ be any regular value of $A$, i.e.\ $(A-\lambda I)u=w$ is solvable for every 
$w\in\ell_1$. Assume now that $w\in Y$ and let $u_w$ denote the solution to $(A-\lambda I)u=w$. Then
\begin{align*}
0&=(w,v^*)=((A-\lambda I)u_w,v^*)
=(Au_w,v^*)-\lambda(u_w,v^*)\\
&=(u_w,A^*v^*)-\lambda(u_w,v^*)
=(\lambda_a-\lambda)(u_w,v^*).
\end{align*}
Since $\lambda\neq\lambda_a$, we have $(u_w,v^*)=0$. In other words, $u_w\in Y$ and $Qu_w=0$. Consequently,
\[
(B_Y-\lambda I_Y)u_w=(A-\lambda I)u_w=w
\]
and $\lambda$ is regular for $B_Y$. Since the spectrum of $A$ without point $\lambda_a$ is contained in the closed centred ball of radius $\varrho$, it remains to show that $\lambda_a$ is regular for $B_Y$. We need to show that the equation $(B_Y-\lambda_a I_Y)u=w$ for $w \in Y$ has a solution $u \in Y$. Since $B_Y=A$ on $Y$, this is equivalent to showing that $(A- \lambda_a I)u=w$ has a solution $u \in Y$. Therefore, we can take $\lambda=\lambda_a$ in \eqref{eq:uk} and notice that \eqref{eq:u0} is satisfied when $\lambda=\lambda_a$ for any $u_0$ because in this case the right-hand side of~\eqref{eq:u0} is $(w, v^*)$. It remains to determine $u_0$ from the equation $(u,v^*)=0$. 

Alternatively, we can solve $(B_Y-\lambda_a I_Y)u=w$ as follows. Let $u^{(\lambda)}$ denote the solution to
$(B_Y-\lambda I_Y)u=w$ for $\lambda$ such that $0<|\lambda-\lambda_a|<\lambda_a - \varrho$. Then, by \eqref{eq:u0},
\[
\lambda(R(\lambda)-1)u_0^{(\lambda)}
=w_0+p\sum_{k=1}^\infty\delta_k
\sum_{j=1}^k w_j\frac{c_jc_{j+1}\cdots c_{k-1}}{\lambda^{k-j+1}}.
\]
Using the condition $(w,v^*)=0$ and the explicit form of the vector $v^*$, one gets easily
\[
\lambda(R(\lambda)-1)u_0^{(\lambda)}
=p\sum_{k=1}^\infty\delta_k
\sum_{j=1}^k w_j\left(\frac{c_jc_{j+1}\cdots c_{k-1}}{\lambda^{k-j+1}}-\frac{c_jc_{j+1}\cdots c_{k-1}}{\lambda_a^{k-j+1}}\right).
\]
Dividing both sides by $\lambda-\lambda_a$ and letting $\lambda\to\lambda_a$, we obtain
\[
u_0^{(\lambda_a)}:=\lim_{\lambda\to\lambda_a}u_0^{(\lambda_a)}
=-\frac{1}{\lambda_aR'(\lambda_a)}p\sum_{k=1}^\infty\delta_k
\sum_{j=1}^k w_j(k-j+1)\frac{c_jc_{j+1}\cdots c_{k-1}}{\lambda_a^{k-j+2}}
\]
Using now \eqref{eq:uk} with $\lambda=\lambda_a$ and
$u_0=u_0^{(\lambda_a)}$ defined just above, we find the unique solution $u^{(\lambda_a )} $ to 
$(B_Y-\lambda_a I_Y)u=w$. It follows from Proposition~\ref{prop: p < 1/2} that $u^{(\lambda_a )} \in \ell_1$. Then $u^{(\lambda_a )} \in Y$ since $Y$ is closed in $\ell_1$. This completes the proof in the aperiodic case.

\underline{The orbit of zero is eventually periodic.} By definition, $k_0 \ge 0$ is the minimal integer such that $T^{k_0}(0)=T^{\varkappa'+1}(0)$. The orbit of zero has $\varkappa'+1$ points and the space $(U, \| \cdot \|)$ is isometric to $(\R^{\varkappa'+1}, {\| \cdot \|}_1)$. The isometry $M$ between these spaces can be written as follows: for any function $f \in U$, which is of the form $f=\sum_{k=0}^\infty u_k{\ind}\{x\ge T^k(0)\}$ for some $u \in \ell_1$, 
\begin{equation} \label{eq: M explicit}
(Mf)_k=u_k,\quad 0 \le k < k_0, \quad \text{ and } \quad (Mf)_k= \sum_{m=0}^\infty u_{k+m(\varkappa'-k_0+1)},\quad k_0 \le k \le \varkappa'.
\end{equation}

Every $f \in U$ admits a unique representation $f(x)=\sum_{k=0}^{\varkappa'} s_k {\ind}\{x\ge T^k(0)\}$ with $s_k = (Mf)_k$. Hence by \eqref{eq: Pf =}, 
\[
Pf(x)
= \left(p\sum_{k=0}^{\varkappa'} \delta_k s_k\right){\ind}\{x\ge T^{0}(0)\}
+\sum_{k=1}^{\varkappa' +1} c_{k-1 }s_{k-1}{\ind}\{x\ge T^{k}(0)\}.
\]
Since in the second sum the term with $k=\varkappa'+1$ is $c_{\varkappa'}s_{\varkappa'} {\ind}\{x\ge T^{k_0}(0)\}$, we see that $P$ is equivalent to the finite dimensional linear operator $\widehat{A}$ on $\R^{\varkappa'+1}$ given by
\begin{equation}
\label{eq:2.3}
\begin{array}{cl}
&(\widehat{A}s)_0=p\sum_{k=0}^{\varkappa'}\delta_k s_k + c_{\varkappa'}s_{\varkappa'} \ind \{ k_0=0\},\\
&(\widehat{A}s)_k=c_{k-1}s_{k-1} + c_{\varkappa'}s_{\varkappa'} \ind \{ k_0=k\},\quad 1 \le k \le \varkappa'.\\
\end{array}
\end{equation}

We first note that the operator $\widehat A$ is non-negative and irreducible. Moreover, the matrix that represents $\widehat A$ in the standard basis is primitive, i.e.\ it does not have a cyclic structure, see~\cite[p.~680]{Meyer00}. Therefore, by the Perron--Frobenius theorem, $\widehat{A}$ has a positive simple eigenvalue that strictly exceeds the maximum $\varrho'$ of absolute values of all other eigenvalues.

Let us solve the equation $\widehat{A}s=\lambda s$ to determine this leading eigenvalue.  It is immediate from the second line in \eqref{eq:2.3} that 
\begin{equation}
\label{eq:2.4}
s_k=\frac{c_0c_1\cdots c_{k-1}}{\lambda^k}s_0,\quad k=1, \ldots,k_0-1,
\end{equation}
and
\begin{equation}
\label{eq:2.5}
s_k=\frac{c_{k_0}c_{k_0+1}\cdots c_{k-1}}{\lambda^{k-k_0}}s_{k_0},
\quad k=k_0+1,\ldots,\varkappa'.
\end{equation}

Assume first that $k_0 \neq 0$.  From the second line in \eqref{eq:2.3} for $k=k_0$ we get 
\begin{align*}
\lambda s_{k_0}&=c_{k_0-1}s_{k_0-1}+c_{\varkappa'}s_{\varkappa'}=\frac{c_0c_1\cdots c_{k_0-1}}{\lambda^{k_0-1}}s_0
+\frac{c_{k_0}c_{k_0+1}\cdots c_{\varkappa'}}{\lambda^{\varkappa' - k_0}}s_{k_0}.
\end{align*}
The positive solution to $\lambda^{\varkappa'-k_0+1}=c_{k_0}c_{k_0+1}\cdots c_{\varkappa'}$ is not an eigenvalue of $\widehat A$. Indeed, in this case $s_0 = 0$, and plugging \eqref{eq:2.4} and \eqref{eq:2.5} into the first line of \eqref{eq:2.3} gives an impossible identity
\[
\sum_{k=k_0}^{\varkappa'}\delta_k\frac{c_{k_0}c_1\cdots c_{k-1}}{\lambda^{k-k_0}} =0.
\]
Consequently, 
\begin{equation}
\label{eq:2.6}
s_{k_0}=\frac{c_0c_1\cdots c_{k_0-1}}{\lambda^{k_0}}
\left(1-\frac{c_{k_0}c_{k_0+1}\cdots c_{\varkappa'}}{\lambda^{\varkappa'-k_0+1}}\right)^{-1} s_0.
\end{equation}
Plugging \eqref{eq:2.4}, \eqref{eq:2.5} and \eqref{eq:2.6} into the first line of \eqref{eq:2.3}, we conclude that the leading eigenvalue satisfies the equation 
\begin{equation}
\label{eq:2.7}
\lambda=p\sum_{k=0}^{k_0-1}\delta_k\frac{c_0c_1\cdots c_{k-1}}{\lambda^k}
+p\left(1-\frac{c_{k_0}c_{k_0+1}\cdots c_{\varkappa'}}{\lambda^{\varkappa'-k_0+1}}\right)^{-1}
\sum_{k=k_0}^{\varkappa'}\delta_k\frac{c_0c_1\cdots c_{k-1}}{\lambda^k}.
\end{equation}

Let us show that this equation is equivalent to \eqref{eq: lambda} for all possible $k_0$, including $k_0=0$. To this end we notice that periodicity of the orbit implies that $\delta_k=\delta_{(\varkappa'-k_0+1)m+k}$ and 
$c_k=c_{(\varkappa'-k_0+1)m+k}$ for all $k=k_0,k_0+1,\ldots,\varkappa'$ and all $m\ge 0$. Then
\begin{align}
\label{eq:lambda_per}
\nonumber
p\sum_{k=k_0}^\infty\delta_k\left(\frac{p}{\lambda}\right)^k 
\Big(\frac{q}{p}\Big)^{L_k}
&=p\sum_{k=k_0}^\infty\delta_k\frac{c_0c_1\ldots c_{k-1}}{\lambda^k}\\
\nonumber
&=p\sum_{m=0}^\infty\sum_{k=k_0}^{\varkappa'}
\delta_{(\varkappa'-k_0+1)m+k}\frac{c_0c_1\ldots c_{(\varkappa'-k_0+1)m+k-1}}{\lambda^{(\varkappa'-k_0+1)m+k}}\\
\nonumber
&=p\sum_{k=k_0}^{\varkappa'}\delta_k\frac{c_0c_1\ldots c_{k-1}}{\lambda^k}\sum_{m=0}^\infty\left(\frac{c_{k_0}c_{k_0+1}\ldots c_{\varkappa'}}{\lambda^{\varkappa'-k_0+1}}\right)^m\\
&=\left(1-\frac{c_{k_0}c_{k_0+1}\ldots c_{\varkappa'}}{\lambda^{\varkappa'-k_0+1}}\right)^{-1}
p\sum_{k=k_0}^{\varkappa'}\delta_k\frac{c_0c_1\ldots c_{k-1}}{\lambda^k},
\end{align}
and thus equations~\eqref{eq: lambda} and \eqref{eq:2.7} are indeed equivalent. 

We now consider the purely periodic case $k_0 = 0$. Plugging \eqref{eq:2.5} into the first line of~\eqref{eq:2.3}, we get 
\[
\lambda s_0=p\sum_{k=0}^{\varkappa'}\delta_k
\frac{c_0c_1\ldots c_{k-1}}{\lambda^k}s_0+
\frac{c_0c_1\ldots c_{\varkappa'}}{\lambda^{\varkappa'}}s_0.
\]
Consequently, a non-trivial solution exists if and only if $\lambda$ satisfies the equation
\begin{equation}
\label{eq:lead.eig}
\lambda=p\sum_{k=0}^{\varkappa'}\delta_k
\frac{c_0c_1\ldots c_{k-1}}{\lambda^k}+
\frac{c_0c_1\ldots c_{\varkappa'}}{\lambda^{\varkappa'}}.
\end{equation}
On the other hand, by \eqref{eq:lambda_per}, equation~\eqref{eq: lambda} is equivalent to
\[
\lambda \left(1-\frac{c_0c_1\ldots c_{\varkappa'}}{\lambda^{\varkappa'+1}}\right) = p\sum_{k=0}^{\varkappa'}\delta_k\frac{c_0c_1\ldots c_{k-1}}{\lambda^k}.
\]
We now easily see that \eqref{eq:lead.eig} is equivalent to \eqref{eq: lambda} in the purely periodic case. Therefore, $\lambda_a$ is the leading eigenvalue of $\widehat A$ for all possible $k_0$.

The eigenspace corresponding  to $\lambda_a$ has dimension one by the Perron--Frobenius theorem. It is spanned by the vector $MV$. In the case $k_0\neq 0$, this can be seen from \eqref{eq:2.4}, \eqref{eq:2.5}, \eqref{eq:2.6}, and \eqref{eq: M explicit}. 
In the purely periodic case $k_0=0$, this is readily seen from  \eqref{eq:2.5} and~\eqref{eq: M explicit}.


Lastly, we find the eigenvectors $ {\widehat A}^* u^* = \lambda_a u^*$. Using the definition of $\widehat A$, we get the  coordinate-wise equations
\[
\begin{array}{rlc}
&\lambda_a  u_k^*=p\delta_k u_0^*+c_k u^*_{k+1}, &  0 \le k < \varkappa', \\
&\lambda_a  u^*_{\varkappa'} = p \delta_{\varkappa'} u_0^* + c_{\varkappa'}  u^*_{k_0}.\\
\end{array}
\]
We solve them exactly as we did in the aperiodic case for \eqref{eq: left ev}. This gives us the same answer $u_k^* = u_0^* v_k^* $ for $1 \le k \le \varkappa'$ because the first $\varkappa'$ equations are the same and we did not use the equation in the second line.  It is not hard  to check that this last equation  is  indeed satisfied for such $u^*$. Then $v^*$, which is proportional to $u^*$, is the eigenvector of ${\widehat A}^*$ corresponding to $\lambda_a$. Therefore,~\eqref{eq: A^n <} follows for any $\gamma \in (\varrho'/\lambda_a, 1)$ as in the aperiodic case with finite~$\varkappa$.

\end{proof}

\section{Properties of the mapping $a \mapsto \lambda_a$} \label{sec: properties}
In this section we prove the properties of $\lambda_a$ stated in Theorem~\ref{thm: main}. We first study the properties of the trajectory of $0$ under $T_a$ as a function of parameter~$a$. 

\begin{lemma} \label{lem: maximal}
For any integer $k \ge 2$, the following is true.

a) The set $\{a \in [1/2,2/3]: \varkappa_a \ge k\}$ is a finite union of closed non-degenerate intervals. On each of these intervals, the functions $T_a^1(0), \ldots, T_a^k(0)$ are strictly decreasing and continuous, and $\delta_1, \ldots, \delta_{k-1}$ are constant. 

b) The set $\{a \in [1/2,2/3]: \varkappa_a = k\}$ is a union of all disjoint open intervals $(a'',a')$ such that $T_{a''}^k(0)=1$ and $T_{a'}^k(0)=\frac{2a'-1}{1-a'}$.
\end{lemma}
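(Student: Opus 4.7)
The proof is by induction on $k \ge 2$, with parts (a) and (b) established together.

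\textbf{Base case} $k=2$: Direct computation gives $T_a(0) = 1/a$ and $T_a^2(0) = (1-a)/a^2$, both continuous and strictly decreasing in $a$ on $[1/2, 2/3]$. Since $1/a \ge 3/2 > 1$, we have $T_a(0) \notin I_a$, so $\varkappa_a \ge 2$ throughout and $\{\varkappa_a \ge 2\} = [1/2, 2/3]$ is a single non-degenerate closed interval with $\delta_1 \equiv 0$. Part (b) for $k=2$ follows because $T_a^2(0)$ is strictly decreasing while $(2a-1)/(1-a)$ is strictly increasing, so each of the equations $T_a^2(0) = 1$ and $T_a^2(0) = (2a-1)/(1-a)$ has a unique solution in $(1/2, 2/3)$.

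\textbf{Inductive step}: Assume (a) and (b) hold for all levels up to $k$. Let $J_i = [\alpha_i, \beta_i]$, $i=1,\ldots,N$, denote the maximal closed non-degenerate intervals composing $\{\varkappa_a \ge k\}$. On each $J_i$, $T_a^k(0)$ is strictly decreasing and continuous by the inductive hypothesis, while $(2a-1)/(1-a)$ is strictly increasing. Hence $J_i \cap \{\varkappa_a \ge k+1\}$ is the disjoint union of the two closed subintervals $\{a \in J_i : T_a^k(0) \ge 1\} = [\alpha_i, a_U]$ and $\{a \in J_i : T_a^k(0) \le (2a-1)/(1-a)\} = [a_L, \beta_i]$, where $a_U, a_L$ are the unique crossings in $J_i$ when they exist (or collapse to the appropriate endpoint otherwise). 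On each such subinterval $\delta_k$ is constant, so $T_a^{k+1}(0) = (T_a^k(0) \pm 1)/a$, and differentiation in $a$ confirms $T_a^{k+1}(0)$ is continuous and strictly decreasing (both derivative terms are non-positive with at most one of them zero).

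The hard part will be verifying the non-degeneracy of these subintervals, which requires pinning down $T_a^k(0)$ at the endpoints $\alpha_i$ and $\beta_i$. The key observation is that $1/(1-\alpha)$ is a fixed point of $T_\alpha$ for every $\alpha \in [1/2, 2/3]$. At the left endpoint: if $\alpha_i = 1/2$, direct iteration gives $T_{1/2}^k(0) = 2 = 1/(1-1/2)$; if $\alpha_i > 1/2$, then because $\varkappa_a < k$ just to the left of $\alpha_i$, the point $\alpha_i$ is the right endpoint of an open interval of $\{\varkappa_a = \ell\}$ for some $\ell < k$, so by IH (b) at level $\ell$ we have $T_{\alpha_i}^\ell(0) = (2\alpha_i - 1)/(1-\alpha_i)$; one iteration yields $T_{\alpha_i}^{\ell+1}(0) = 1/(1-\alpha_i)$, which persists as $T_{\alpha_i}^k(0) = 1/(1-\alpha_i) > 1$. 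This forces $\alpha_i \neq a_U$, so $[\alpha_i, a_U]$ is non-degenerate. At the right endpoint: if $\beta_i = 2/3$, then $T_{2/3}^k(0) \neq 1 = (2\beta_i-1)/(1-\beta_i)$ by the aperiodicity of the orbit of $0$ under $T_{2/3}$ proved in Section~\ref{sec:Operator}. If $\beta_i < 2/3$, then $\beta_i$ is the left endpoint of a removed open interval at some level $\ell < k$, giving $T_{\beta_i}^\ell(0) = 1$ and $T_{\beta_i}^{\ell+1}(0) = 0$, so the orbit of $0$ under $T_{\beta_i}$ is periodic of period $\ell+1$; if any orbit point equaled $(2\beta_i-1)/(1-\beta_i)$, its image would be the fixed point $1/(1-\beta_i) \neq 0$, contradicting the periodic return to $0$. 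Hence $\beta_i \neq a_L$ and $[a_L, \beta_i]$ is non-degenerate, completing (a) for $k+1$. Part (b) for $k+1$ follows at once: on each maximal interval $J'$ of $\{\varkappa_a \ge k+1\}$, by the same monotonicity argument the set $\{\varkappa_a = k+1\} \cap J' = \{a \in J' : T_a^{k+1}(0) \in I_a\}$ is a single open interval whose endpoints satisfy the prescribed equations.
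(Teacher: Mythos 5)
Your proof is correct and follows essentially the same route as the paper's: induction on $k$, with the endpoints of the maximal intervals of $\{\varkappa_a\ge k\}$ identified via part (b) at lower levels, the fixed-point value $\tfrac{1}{1-a}$ persisting at left endpoints, and pure periodicity (together with the fact that the orbit of $0$ under $T_{2/3}$ never hits $1$) excluding the degenerate case at right endpoints. The only compressed step is your final sentence: openness of $\{\varkappa_a=k+1\}\cap J'$ and the level-$(k+1)$ endpoint equations also require evaluating $T_a^{k+1}(0)$ at the endpoints of $J'$ (for instance $T_{a_L}^{k+1}(0)=\tfrac{1}{1-a_L}$ and $T_{a_U}^{k+1}(0)=0$, and $T_{\beta_i}^{k+1}(0)\notin I_{\beta_i}$ by periodicity), which follows by exactly the computations you already made at level $k$.
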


\begin{proof}
We argue by induction. In the basis case $k=2$, we have $\{a \in [\frac12, \frac23]: \varkappa_a \ge k\}= [\frac12, \frac23]$, where both functions $T_a^1(0)=\frac1a$ and $T_a^2(0)=\frac{1-a}{a^2}$ are strictly decreasing and continuous. Therefore, since the range of the second one is $[\frac34,2]$ and the function $\frac{2a-1}{1-a}$ increases on $[\frac12, \frac23]$ from $0$ to $1$, we have
\[
\{a \in [1/2,2/3]: \varkappa_a = 2\} = \{a\in [1/2,2/3]: T_a^2(0) \in I_a\}= (a'',a')
\]
for the unique $a'$ and $a''$ such that $T_{a''}^2(0)=1$ and $T_{a'}^2(0)=\frac{2a'-1}{1-a'}$.

Assume now that the claims are proved for all $2\le k\le n$. Let $J$ be a maximal closed interval contained in $\{a \in [1/2,2/3]: \varkappa_a \ge n\}$. It is non-degenerate (not a point) by the assumption of induction. Since 
\[
\{a : \varkappa_a \ge n\} =    \Big[\frac12, \frac23 \Big]\setminus \bigcup_{i=2}^{n-1} \{a : \varkappa_a = i\}
\]
and the sets under the union are disjoint, it follows from the assumption of induction for Part b) that $J = [a',a'']$ for some distinct $a' < a''$ that satisfy the following restrictions. If $\frac12 \in J$, then $a'=\frac12$ and  $T_{a''}^i(0)=1$ for some integer $2 \le i \le n-1$. If $\frac23 \in J$, then $a''=\frac23$ and $T_{a'}^i(0)=\frac{2a'-1}{1-a'}$ for some integer $2 \le i \le n-1$.  Otherwise,  $T_{a''}^i(0)=1$ and $T_{a'}^j(0)=\frac{2a'-1}{1-a'}$ for some integers $2 \le i, j \le n-1$. 
Notice that in all these cases, we have 
\begin{equation} \label{eq: dicho}
T^n_{a'}(0)= \frac{1}{1-a'} \quad \text{ and } \quad \text{either } T^n_{a''} (0)\ge 1 \text{ or }T^n_{a''}(0) < \frac{2a''-1}{1-a''}.
\end{equation}
Indeed, if $a'' =\frac23$, then  the dichotomy above is trivial because in this case  $\frac{2a''-1}{1-a''}=1$, otherwise the orbit of zero under $T_{a''} $ is purely periodic and  thus $0$ is its only point in $[0,1)$. 

If $T^n_{a''} (0)\ge 1$, then $\{a \in J: \varkappa_a \ge n+1\} = J$. Otherwise, since the mapping $a \mapsto T_a^n(0)$ is continuous and strictly decreasing on $J$, it follows from~\eqref{eq: dicho} that there exist unique  $b', b'' \in (a',a'')$ such that $T_{b''}^n(0)=1$ and $T_{b'}^n(0)=\frac{2b'-1}{1-b'}$; consequently,
\[
\{a \in J: \varkappa_a \ge n+1\} = [a', b''] \cup [b',a''] \quad \text{ and } \quad
\{a \in J: \varkappa_a = n+1\} =  (b'',b').
\]
Thus, since $\{a:\varkappa_a \ge n\} $ is a finite union of closed disjoint non-degenerate intervals, the set $\{a:\varkappa_a \ge n+1\} $ is so.  The assertion of Part b) follows for $k=n+1$ because every interval contained in $\{a: \varkappa_a = n+1\} $ is a subinterval of a maximal interval in $\{a:\varkappa_a \ge n\} $. 

Furthermore, let $J' = J$  if $T^n_{a''} (0)\ge 1$, otherwise let $J'$ be either of the intervals $[a', b'']$ and $[b',a'']$. Consider the mapping $(a, x) \mapsto T_a(x)$ defined on its domain $D$ given by
\[
D= \bigg \{(a,x): a \in \Big[\frac12, \frac23 \Big], \, x \in \Big [0, \frac{2a-1}{1-a} \Big] \cup \Big [1, \frac{1}{1-a}\Big ] \bigg \}.
\]
This mapping is continuous on $D \setminus \{(\frac23,1)\}$. We have $(a, T_a^n(0)) \in D \setminus \{(\frac23,1)\}$ for every $a \in J'$ by the choice of $J'$ and the fact that the orbit of $0$ under $T_{2/3}$ does not hit $1$, as shown in Section~\ref{sec:Operator}. Therefore, $T_a^{n+1}(0)$ is continuous on $J'$, as claimed, by $T_a^{n+1}(0) = T_a(T_a^{n}(0))$.  Clearly, $\delta_n$ is constant on $J'$, as claimed.

Lastly, pick any $a,b \in J'$ that satisfy $a > b$. To finish the proof, we claim that
\[
T^{n+1}_{a}(0)=T_{a}(T^n_{a}(0)) < T_{a}(T^n_b(0)) <  T_b(T^n_b(0)) = T^{n+1}_b(0).
\]
Indeed, in the first inequality we used that $T^n_{a}(0) < T^n_b(0)$ by the assumption of induction and the facts that (i) $T_a(x)$ increases in $x$ on each of the intervals 
$[0, \frac{2a-1}{1-a})$ and $[1, \frac{1}{1-a}]$, and (ii) one of these intervals contains both quantities $T^n_{a}(0)$ and $T^n_{b}(0)$ by the choice of $J'$. In the second inequality we used that $T_a(x) < T_b(x)$ for every fixed $x$ in the domain of $T_b$, which contains $T^n_b(0)$ (by the choice of $b$), and is included in the domain of $T_a$. 
\end{proof}

We now prove the properties of $\lambda_a$ stated 
in Theorem~\ref{thm: main}.

1. It is evident from \eqref{AR-def} and a simple coupling argument that the persistence probabilities $\pr_x(\tau>n)$ are monotone in $a$ for every fixed $x$ and $n$. Together with~\eqref{lambda-def} this yields monotonicity of $\lambda_a$ on $(0,1)$. 

Note in passing that it is easy to give a self-contained proof of the monotonicity using the facts that $\lambda_a$ is constant on every interval in $[\frac12, \frac23] \setminus S$ and is constant on no open interval that meets $S$. We will prove these facts later on using equation~\eqref{eq: lambda}.

2. Let us show that the function $\lambda$ is continuous at every $a' \in [\frac12, \frac23]$. 

Assume that $\varkappa_{a'}=k_0$ is finite. It follows from Lemma~\ref{lem: maximal} that the  functions $a \mapsto \delta_1, \ldots, \delta_{k_0}, \varkappa_a$ are  constant on the maximal open interval that includes $a'$ and is contained in the set $\{a: \varkappa_a = k_0\}$. Then $\lambda_a$ is constant on this interval, and therefore continuous at $a'$.

We now assume that $\varkappa_{a'}= \infty$. Denote by $ \delta_k', L_k'$ the respective values of $\delta_k, L_k$ at $a=a'$.

a) Assume first that $\{ T_{a'}^k(0): k \ge 0 \}$ hits neither of the points $1$ and $\frac{2a'-1}{1-a'}$; then $a' \neq \frac12$. Consider the case where  $a' \neq \frac23$. Then for every $N \ge 1$, the point $a'$ is in the interior of some maximal interval contained in the set $\{a: \varkappa_a \ge N\}$, because otherwise $a'$ is an endpoint of some maximal interval in one of these sets, which is a contradiction by Lemma~\ref{lem: maximal}.b. Therefore, by Lemma~\ref{lem: maximal}.a,  
for every $N \ge 1$ there exists an $\varepsilon>0$ such that $(a'-\varepsilon, a'+ \varepsilon) \subset (1/2, 2/3)$ and $|a-a'|< \varepsilon$ implies that $\varkappa_a \ge N$ and $\delta_0= \delta_0', \ldots, \delta_N=\delta_N'$. For any $a \in (a'-\varepsilon, a')$, we have $\lambda_a \le \lambda_{a'}$ and by \eqref{eq: lambda},
\begin{align*}
0 \le \lambda_{a'} - \lambda_a &= p\sum_{k=0}^{\varkappa_{a'}} \Big( \frac{p}{\lambda_{a'}} \Big)^k \Big( \frac{q}{p} \Big)^{L_k'} \delta_k' - p\sum_{k=0}^{\varkappa_a} \Big( \frac{p}{\lambda_a} \Big)^k \Big( \frac{q}{p} \Big)^{L_k} \delta_k \\
&\le p\sum_{k=0}^\infty \Big( \frac{p}{\lambda_{a'}} \Big)^k \Big( \frac{q}{p} \Big)^{L_k'} \delta_k' - p\sum_{k=0}^N \Big( \frac{p}{\lambda_{a'}} \Big)^k \Big( \frac{q}{p} \Big)^{L_k} \delta_k \\
&= p\sum_{k=N+1}^\infty \Big( \frac{p}{\lambda_{a'}} \Big)^k \Big( \frac{q}{p} \Big)^{L_k'} \delta_k'.
\end{align*}
The last expression tends to zero as $N \to \infty$ as a remainder of a converging series. This proves left continuity of $\lambda_a$ at $a'$. Similarly, for any $a \in (a', a'+\varepsilon)$, we get 
\[
0 \le \lambda_a - \lambda_{a'} \le p\sum_{k=N+1}^{\varkappa_a} \Big( \frac{p}{\lambda_a} \Big)^k \Big( \frac{q}{p} \Big)^{L_k} \delta_k \le \sum_{k=N+1}^\infty \Big( \frac{p}{\lambda_{a'}} \Big)^k \Big( \frac{q}{p} \Big)^{C(k+1)}
\]
with $C=C_a>0$ given in Remark~\ref{rem: C} if $p < 1/2$ and $C= 0$ if $p \ge 1/2$. The last expression above tends to zero as $N \to \infty$ uniformly in $a \in (a', a'+\varepsilon)$. This is obvious if $p \ge 1/2$, otherwise this follows from the inequality $\lambda_{a'} > p (q/p)^{C_{a'}}$ and the fact that $C_a=C_{a'}$ for all $a \in (a', a'+\varepsilon)$ when $N \ge t_{[(q/p)^{1/t_1}]}$. This proves right continuity of $\lambda_a$ at $a'$. 

For $a'=\frac23$, we need to prove only the left continuity of $\lambda_a$, which follows as above.

b)  Assume now that $T_{a'}^{k_0}(0)=\frac{2a'-1}{1-a'}$ for some $k_0 \ge 0$. If $a' \neq 1/2$, it follows from Lemma~\ref{lem: maximal}.b that the  functions $a \mapsto \delta_1, \ldots, \delta_{k_0}, \varkappa_a$ are  constant on the maximal open interval that has the right endpoint $a'$ and is contained in the set $\{a: \varkappa_a = k_0\}$. Moreover, $\delta_k=\delta'_k$ for all $k\le k_0$. Then
$a \mapsto \lambda_a$ is constant on this interval, and therefore 
we will prove left-continuity of this mapping at $a'$ if we show that
$\lambda_a=\lambda_{a'}$ for all $a$ in this interval. To this end we notice that $T_{a'}^k(0)=\frac{1}{1-a'}$ for all $k>k_0$. Consequently, $\varkappa_{a'}=\infty$ and $\delta_k=0$ for all $k>k_0$. Using these properties we can represent equation \eqref{eq: lambda} in the following way:
\begin{align*}
1=\sum_{k=0}^{\varkappa_{a'}}\delta_k'\left(\frac{p}{\lambda}\right)^{k+1}\Big(\frac{q}{p}\Big)^{L_k'}
=\sum_{k=0}^{k_0}\delta_k'\left(\frac{p}{\lambda}\right)^{k+1}\Big(\frac{q}{p}\Big)^{L_k'}
=\sum_{k=0}^{\varkappa_a}\delta_k\left(\frac{p}{\lambda}\right)^{k+1}\Big(\frac{q}{p}\Big)^{L_k}.
\end{align*}
Therefore, $\lambda_a=\lambda_{a'}$ as required, and the proof of left-continuity of $\lambda_a$ at $a'$ is completed. 

Furthermore, for every $N \ge k_0$, $a'$ is the left endpoint of some maximal interval contained in the set $\{a: \varkappa_a \ge N\}$. By Lemma~\ref{lem: maximal}.a, for every $N \ge k_0$ there exists an $\varepsilon \in (0, 2/3-a')$ such that $a\in (a', a'+ \varepsilon)$ implies that $\varkappa_a \ge N$ and $\delta_0= \delta_0', \ldots, \delta_N=\delta_N'$. Hence for the constant $C_a$ given in Remark~\ref{rem: C} it is true that $C_{a'} \ge C_a$ when $a\in (a', a'+ \varepsilon)$. Then right continuity of $\lambda_a$  at $a'$ follows as above in Case a).

c) Assume that $ T_{a'}^k(0) = 1$ for some $k \ge 0$. The left-continuity of $\lambda_a$ at $a'$ follows as above in Case a). The right-continuity of $\lambda_a$ at $a'$ follows as above in case of finite $\varkappa_{a'}$ since the orbit of zero is purely periodic and therefore $\lambda_{a'}$ satisfies equation~\eqref{eq:lead.eig}.

3. Let us prove that the (topological) support of the Lebesgue--Stieltjes measure $d \lambda_a$ on $[1/2,2/3]$ is the set $S$, which is closed and  has measure zero. 

Recall that $S= \{ a \in [1/2,2/3]: \varkappa_a =\infty \}$ and $\widehat T_a(x)=\frac1a x +\frac12 \pmod 1$ for $ 0 \le x \le 1$. By Parry~\cite[Theorem~6]{Parry64} and Halfin~\cite[Theorem~4.4]{Halfin75}, the mapping $\widehat T_a$ has an absolutely continuous invariant probability measure $\widehat  \mu_a$ on $[0,1]$ with the density proportional to 
\begin{equation} \label{eq: Parry}
\widehat  h_a(x)=\sum_{k=0}^\infty a^{k+1} \ind \{\widehat T_a^k(0) \le x \}-
\sum_{k=0}^\infty a^{k+1} \ind \{\widehat T_a^k(1) \le x\}.
\end{equation}
Note in passing that there are no other invariant probability densities for $\widehat T_a$.

It follows from \eqref{eq: T mod 1} that $\varkappa_a=\inf \{k\ge 0:\widehat T_a^k(0)\in \widehat I_a \}$, where $\widehat I_a = (\frac{a(2a-1)}{2(1-a)},\frac{a}{2})$.
According to Corollary 1 in Faller and Pfister \cite{FP09}, the orbit of zero $\{\widehat T_a^k(0): k \ge 0\}$ is $\widehat  \mu_a$-{\it normal} for almost all values of $a$. This means that for every continuous function $f$ on $[0,1]$, we have
\[
\lim_{n\to\infty}\frac{1}{n}\sum_{i=0}^{n-1}f(\widehat T_a^k(0)) =\int_0^1 f(x) \widehat \mu_a(dx).
\]
On the other hand, for $a\in (\frac{1}{2},\frac{2}{3})$ we have $\widehat \mu_a(\widehat I_a)>0$ by Theorem 3 in Hofbauer~\cite{Hofbauer81}, which asserts that the support of $\widehat  \mu_a$ is the whole of the interval $[0,1]$. Hence $\varkappa_a$ is finite for almost all  $a\in (\frac{1}{2},\frac{2}{3})$, and thus $S$ has Lebesgue measure zero.

The set $S$  is closed because its complement in $[\frac{1}{2},\frac{2}{3}]$ is open by  Lemma~\ref{lem: maximal}.b.
The topological support of $d \lambda_a$  is a subset of $S$ since $\lambda_a$ is constant on every interval contained in $[\frac{1}{2},\frac{2}{3}] \setminus S$, as we proved above in Item 2. Then $d \lambda_a$ is singular with respect to the Lebesgue measure. It remains to show that $\lambda_a$ is constant on no open interval that meets $S$.

To this end, we first observe that if $T^{k_0}_{a'}(0)=\frac{2a'-1}{1-a'}$ for some $k_0 \ge 1$, then $\lambda_{a'} < \lambda_a$ whenever $a'< a \le 2/3$. Indeed, pick the $\varepsilon>0$ defined in Item 2.b above for $N=k_0$. If there is an $\varepsilon_1 \in (0, \varepsilon)$ such that $\lambda_a = \lambda_{a'}$ for any $a \in (a', a'+\varepsilon_1)$, then for such $a$ it follows from~\eqref{eq: lambda} that
\[
\lambda_a - \lambda_{a'} 
=  p\sum_{k=k_0+1}^{\varkappa_a} \Big( \frac{p}{\lambda_a} \Big)^k \Big( \frac{q}{p} \Big)^{L_k} \delta_k
\]
because $\delta_k'=0$ for all $k > k_0$. At least one term in the sum is strictly positive because $1 \le T_a^{k_0+1}(0) < \frac{1}{1-a}$, hence the trajectory of $0$ eventually returns to $[0,1)$. This is a contradiction.

Now assume that $a' \in S$ and  $\{ T_{a'}^k(0): k \ge 1 \}$ does not hit the point $\frac{2a'-1}{1-a'}$. Then $a'>\frac12$, and since the set $S$ has measure $0$, there exists an increasing sequence $\{a_n\}_n \subset (1/2, a') \setminus S$ that converges to $a'$ as $n \to \infty$. Denote by $a_n'$ the right endpoint of the maximal open interval in $[\frac12, \frac23]\setminus S$ that contains $a_n$. The sequence $a_n'$ increases and converges to $a'$. We have $a_n'< a'$ for every $n $ because otherwise $ T_{a'}^k(0)=\frac{2a'-1}{1-a'}$ for some $k \ge 1$ by Lemma~\ref{lem: maximal}.b. Then $\lambda_{a_n'} < \lambda_{a'}$, as shown above, hence $\lambda_a < \lambda_{a'}$ for all $a<a'$.  Thus, we showed that  $\lambda_a$ is constant on no open neighbourhood of $a'$. 

4. The equality $\lambda_{1/2}(p)=p$ for all $p \in (0,1)$ is already established in \eqref{eq: lambda <1/2}.

\section{Convergence to the quasi-stationary distribution $\nu_a$ and its properties} \label{sec: condit}
In this section we prove convergence of the conditional distributions stated in \eqref{eq: weak lim}. Then we prove the properties of the limiting quasi-stationary distribution $\nu_a$ stated in Theorem~\ref{thm: main}.

\subsection {Convergence of the conditional distributions}
\label{ssec:cond}

The functional space $U$, which we used in our analysis of the probabilities $\pr_x(\tau>n)$, is quite narrow. This space does not contain indicator functions of all subintervals of $[0,1/(1-a)]$ and therefore, we cannot use $U$ to study the distribution of $X_n$ conditioned on $\{\tau>n\}$. For this reason, we shall now regard $P$ as an operator acting on the larger space $BV$ of functions of bounded variation on $[0, 1/(1-a)]$ equipped with the standard norm~\eqref{eq: BV norm}. 

We will use the following decomposition of the operator $P$. Define
\[
P_1f(x):=pf(1){\ind}\{x\ge T^0(0)\}+qf(0){\ind}\{x\ge T^1(0)\}
\]
and 
\[
P_2f(x)=p(f(ax+1)-f(1))+q{\ind}\{x\ge 1/a\}(f(ax-1)-f(0)).
\]
Then $P=P_1+P_2$ and $P_1f\in U$ for every function $f$ of bounded variation on $[0, 1/(1-a)]$. 

Using induction one can easily show that
\begin{equation}
\label{eq:Pnf}
P^nf=P_2^nf+\sum_{j=0}^{n-1}P^{n-j-1}(P_1P_2^jf), \quad n\ge1.
\end{equation}
Since each function $P_1P_2^jf$ belongs to $U$, it follows from \eqref{eq:PF} that
\[
P^{n-j-1}(P_1P_2^jf)(x)\sim c (M(P_1P_2^jf),v^*)\lambda_a^{n-j-1}V(x)
\]
as $n \to \infty$ for every fixed $j \ge 0$ and $x \in  [0, 1/(1-a)]$. Moreover, by \eqref{eq: norms}, for every $n \ge 1$, 
\begin{equation}
\label{eq:P^n f diff}
\bigg  |\lambda_a^{-n}P^n f(x)- \sum_{j=0}^{n-1} \frac{c V(x)}{\lambda_a^{j+1} }(M(P_1P_2^jf),v^*) \bigg | \le \lambda_a^{-n} \| P_2^n f \| + \sum_{j=0}^{n-1} \frac{C_1\gamma^n}{(\gamma \lambda_a)^{j+1}}\|P_1P_2^jf\|.
\end{equation}

For now  it suffices to consider the step functions $f_z(x)= \ind\{x > z\}$ for $z \in  [0, 1/(1-a)]$. Let us compute $P_1 P_2^j f_z$ for a fixed  $z$. Denote
\[
c_k(z)=p(1-\delta_k(z))+q\delta_k(z) 
\quad\text{for}\quad
0 \le k < \varkappa_a(z)+1,
\]
where, recall, $\delta_k(z) = \ind\{T_a^k(z)<1\}$. By the definition of $P_2$, we have
\[
P_2 f_z(x) =p \big (\ind\{ax+1 > z \}-\ind\{1 > z \} \big)+q{\ind}\{x \ge 1/a\} \big(\ind\{ax-1 > z \}-\ind\{0 > z \}\big).
\]
Considering the three possible positions of $z$ relative to the set $I_a$, it is easy to check that 
$
P_2f_z(x)
=c_0(z)\ind\{x>T_a(z)\}
$
for $z \not \in  I_a$ and $P_2f_z\equiv 0$ for $z\in I_a$. Iterating this, we get
\begin{equation} \label{eq: P_2^j step}
P_2^jf_z(x)=c_0(z)c_1(z)\ldots c_{j-1}(z)\ind\{x>T_a^j(z)\}
\end{equation} 
for all integer $j \le \varkappa_a(z)$
and $P_2^jf_z\equiv 0$ for $j>\varkappa_a(z)$.
Therefore, since 
\begin{equation} \label{eq: P_1 P_2^j step}
P_1 P_2^j f_z(x) = p P_2^j f_z(1)
\quad\text{for every }x\in [0,1/(1-a)],
\end{equation} 
we obtain 
\begin{equation} \label{eq: bar F}
\sum_{j=0}^\infty \lambda_a^{-j-1}P_1P_2^jf_z(x)
=\sum_{j=0}^{\varkappa_a(z)} \delta_j(z)  (p/\lambda_a)^{j+1} (q/p)^{L_j(z)}=:\overline{F}_a(z).
\end{equation}


Combining equalities \eqref{eq: P_2^j step} and  \eqref{eq: P_1 P_2^j step} with estimate \eqref{eq:P^n f diff}, where $(M \ind, v^*)=1 $ for the constant function $\ind (x) \equiv 1$ by~\eqref{eq: v^*}, and estimating the reminder of the sum in \eqref{eq: bar F}, we obtain
\begin{align*}
\big |\lambda_a^{-n}  P^nf_z(x) - c \overline{F}_a(z) V(x) \big|  & \le  (C_1+ \gamma \lambda_a/p) \gamma^n \sum_{j=0}^{\varkappa_a(z) \wedge n} (p/(\gamma \lambda_a))^{j+1} (q/p)^{L_j(z)} \\
&\qquad  +  cV(x)  \sum_{j=n}^{\varkappa_a(z)} (p/\lambda_a)^{j+1} (q/p)^{L_j(z)}.
\end{align*}
Denote $r=\max(1, q/p)$ and $C_2'=C_1+ \gamma \lambda_a/p+ cV(\tfrac{1}{1-a})$. Since $V(x) \le V(\tfrac{1}{1-a})$, by Proposition~\ref{prop: p < 1/2} we get
\begin{align*}
\big |\lambda_a^{-n}  P^nf_z(x) - c \overline{F}_a(z) V(x) \big| 
& \le C_2' \gamma^n \sum_{j=0}^n (p/(\gamma \lambda_a))^{j+1} r^{C(j+1)+1} +  C_2' \sum_{j=n}^\infty (p/\lambda_a)^{j+1} r^{C(j+1)+1}.
\end{align*}
This implies that for any $\gamma_1 \in \big (\max(\gamma, p r^C / \lambda_a),1 \big)$, there exists a constant $C_2>0$ such that
\begin{equation}
\label{eq:Pnf conv rate}
|\lambda_a^{-n}  P^nf_z(x) - c \overline{F}_a(z) V(x)|  \le C_2  \gamma_1^n
\end{equation}
for every $x, z \in [0,1/(1-a)]$ and $n \ge 1$. In particular, \eqref{eq:Pnf conv rate} implies that $\overline{F}_a(z)$ is finite.

Finally, using \eqref{eq:Pnf conv rate} and taking \eqref{eq:PF} into account, we conclude that  
\begin{equation}
\label{eq:quasi-stat}
\lim_{n \to \infty} \pr_x(X_n>z|\tau>n)
= \lim_{n \to \infty} \frac{P^n f_z(x)}{P^n \ind (x)}
= \overline{F}_a(z)  
\end{equation}
uniformly in  $x,z \in[0,1/(1-a)]$ since $V(x) \ge V(0) \ge 1$. This limit does not depend on the starting point $x$. It also follows that the function $\overline{F}_a$ is non-increasing. Therefore, the conditional distributions converge weakly to the measure $\nu_a$ on $[0, \frac{1}{1-a}]$ such that 
\[
\nu_a\left(\left(z,\frac{1}{1-a}\right]\right)
=\overline{F}_a(z)
\]
whenever $\overline{F}_a$ is continuous at $z$. This measure is a probability because the conditional distributions are tight, since they are supported on $[0,1/(1-a)]$.

\subsection{Support and non-atomicity of $\nu_a$}

Fix an $a \in (\frac12, \frac23]$. It is readily seen from \eqref{eq: lambda} that $\overline{F}_a(0) =1$, therefore $\nu_a(\{0\})=0$ for every $a$. Let us check continuity of the function $\overline{F}_a$ at an arbitrary  point $z$.

We first assume that $\varkappa_a(z)$ is finite. Then the orbit of $z$ does not hit the point $\frac{2a-1}{1-a}$. If the orbit does not hit $1$, then by piecewise continuity of the iterations of $T_a$, there exists an $\varepsilon>0$ such that $|z-z'|<\varepsilon$ implies that 
$\varkappa_a(z')=\varkappa_a(z)$,
$L_j(z')=L_j(z)$, and $\delta_j(z)=\delta_j(z')$ for all $j\le\varkappa_a(z)$; cf.\ Lemma~\ref{lem: dom T^n}. Consequently,
\[
\overline{F}_a(z)=\overline{F}_a(z'),\quad |z-z'|<\varepsilon.
\]
Therefore, $z$ is not in the support of $\nu_a$.


Assume that $T^k_a(z)=1$ for some $k\ge0$. Let $k_0$ be the minimal integer with this property. By a continuity argument as above combined with piecewise monotonicity of $T_a$ (cf.\ Lemma~\ref{lem: dom T^n} and use that $z \in G_{\varkappa_a(z)}$), there exists an $\varepsilon >0$ such that $\varkappa_a(z)=\varkappa_a(z')$ and $\overline{F}_a(z)=\overline{F}_a(z')$ for all $z' \in [z, z+\varepsilon)$. In order to consider the values of $\overline{F}_a(z')$  for $z'<z$ (if $z>0$), we note that $T^{k_0+j+1}_a(z)=T^j_a(0)$ and thus $L_{k_0+j+1}(z) = L_{k_0+1}(z) + L_j$ for every  integer $0 \le j \le \varkappa_a$.
Therefore, 
\begin{align*}
\overline{F}_a(z)
&=\sum_{j=0}^{k_0-1}\delta_j(z)\left(\frac{p}{\lambda_a}\right)^{j+1}
\left(\frac{q}{p}\right)^{L_j(z)}\\
&\hspace{1cm}
+\left(\frac{p}{\lambda_a}\right)^{k_0+1}\left(\frac{q}{p}\right)^{L_{k_0+1}(z)}
\sum_{j=0}^{\varkappa_a(0)}\left(\frac{p}{\lambda_a}\right)^{j+1}
\left(\frac{q}{p}\right)^{L_j} \delta_j(0).
\end{align*}
Taking into account the equalities $L_{k_0+1}(z) -  L_{k_0}(z)=\delta_{k_0}(z)=0$ and  \eqref{eq: lambda}, we arrive at
\begin{equation} \label{eq: F hits 1}
\overline{F}_a(z)
=\sum_{j=0}^{k_0-1}\delta_j(z)\left(\frac{p}{\lambda_a}\right)^{j+1}
\left(\frac{q}{p}\right)^{L_j(z)}
+\left(\frac{p}{\lambda_a}\right)^{k_0+1}\left(\frac{q}{p}\right)^{L_{k_0}(z)},
\end{equation}
which is valid even if $\varkappa_a(z)=\infty$. Using this representation and repeating the argument which we gave above for $z'\ge z$ (cf. Lemma~\ref{lem: dom T^n} and use that $z \in G_{k_0+1}$), we can assume w.l.o.g.\ that $\varkappa_a(z')=k_0$ for $z' \in (z-\varepsilon, z)$ and $\overline{F}_a(z)=\overline{F}_a(z')$ for $z' \in (z-\varepsilon, z)$. Thus, $\overline{F}_a$ is constant on the whole of $(z-\varepsilon,  z+\varepsilon)$.

Thus, recalling that $Q_a=\{z:\varkappa_a(z)=\infty\}$, in either case we showed that if $z \not \in Q_a$, then  $\nu_a((z-\varepsilon,  z+\varepsilon)) = 0$ and the interval $(z-\varepsilon,  z+\varepsilon) $ does not intersect $Q_a$. Hence the set $Q_a$ is closed and $\nu_a$ is supported on $Q_a$. As we have already mentioned in Section~\ref{sec: properties}, for every $a \in (\frac12, \frac23)$, the invariant measure $\widehat \mu_a$ of the transformation $\widehat{T}_a$ is ergodic and is equivalent to the Lebesgue measure on $[0,1]$ by Theorem 3 in~\cite{Hofbauer81}. Hence for every $a \in (\frac12, \frac23)$, almost all orbits $\{\widehat T_a^k(z):k\ge0\}$ are normal. This implies that the set $Q_a$ has Lebesgue measure zero, and thus the measure $\nu_a$ is singular.

We now assume that $\varkappa_a(z)$ is infinite. Denote
\[
\overline{F}_a(z, N) = \sum_{j=0}^{\varkappa_a(z) \wedge N } \delta_j(z) \Big(\frac{p}{\lambda_a}\Big)^{j+1}
\Big(\frac{q}{p}\Big)^{L_j(z)}.
\]
If the orbit of $z$ does not hit the points $\frac{2a-1}{1-a}$ and $1$, then for any $N >0 $ there exists an $\varepsilon(N) >0$ such that 
\[
\overline{F}_a(z, N)=\overline{F}_a(z', N), \qquad |z-z'|<\varepsilon(N),
\]
as in the case of finite $\varkappa_a(z)$. We arrive at the same conclusion if the orbit of $z$ hits $1$ but does not hit $\frac{2a-1}{1-a}$, once we separately consider the points $z' < z $ and $z' \ge z$, as in the finite case; here  $\varkappa_a(z')$ and $\overline{F}_a(z')$ are constant on $(z-\varepsilon, z)$ for some $\varepsilon >0$ and $\varkappa_a(z') \to \infty$ as 
$z' \to z+$. Likewise, we arrive at the same conclusion if the orbit of $z$ hits $\frac{2a-1}{1-a}$ but does not hit $1$; here  $\varkappa_a(z')$ and  $\overline{F}_a(z')$ are constant on $(z, z+\varepsilon)$ for some $\varepsilon >0$  and  $\varkappa_a(z') \to \infty$ as 
$z' \to z-$ if $z>0$ (the case $z=0$ is possible). In each of the three cases, we have
\begin{align}  \label{eq: F tail}
|\overline{F}_a(z')- \overline{F}_a(z)| &= \Big| [\overline{F}_a(z')-\overline{F}_a(z',N)] - [\overline{F}_a(z)-\overline{F}_a(z,N)] \Big| \notag  \\
& \le 2 r^C \sum_{j=N+1}^\infty  \bigg(\frac{p r^C}{\lambda_a}\bigg)^{j+1}
\end{align}
with $r = \max(1, q/p)$ and the constant $C>0$ as in Proposition~\ref{prop: p < 1/2}.
Taking $N \to \infty $ establishes continuity of  $\overline{F}_a$ at point $z$.  

It remains to consider the case where the orbit of $z$ hits both points $\frac{2a-1}{1-a}$ and $1$. This can only happen if $T_a^{k_1}(z)=1$ and $T_a^{k_2}(z) = \frac{2a-1}{1-a}$ for some $0 \le k_1 < k_2$ (and hence $T_a^{k_2-k_1}(0)=\frac{1}{1-a})$. It is easy to see that in this case there is an $\varepsilon >0$ such that $\varkappa_a(z')=k_1$ on $(z-\varepsilon, z)$; $\varkappa_a(z')=k_2$ on $(z,z+\varepsilon)$; and  $\overline{F}_a$ is constant on the whole of $(z-\varepsilon,  z+\varepsilon)$ by \eqref{eq: F hits 1}. In this case $z$, which is an isolated point of $Q_a$, is not in the support of $\nu_a$.  This completes the proof of  continuity of $\overline{F}_a$ on the whole of the interval $[0,\frac{1}{1-a}]$.

It remains to show that the topological support of $\nu_a$ is the set
$Q_a \setminus H_a$, where $H_a = \varnothing$ if $T^k_a(0) \neq \frac{1}{1-a}$ for all integer $1 \le k \le \varkappa_a$, and $H_a = \cup_{k=0}^\infty T_a^{-k}(0)$ otherwise. Our proof above of the continuity of $\overline{F}_a$ at points $z$ with $\varkappa_a(z)=\infty$ actually showed that $\nu_a$ is supported on $Q_a \setminus H_a$, and that each $z \in H_a$ is an isolated point of $Q_a$. 
 

We first assume that $a< \frac23$.  Let $z$ be a point in $Q_a \setminus H_a$. If $T_a^{k_0}(z) =\frac{2a-1}{1-a}$ for some $k_0 \ge 0$, then $z \neq 0$ and  $T_a^k(z) \neq 1$ for all $k < k_0$. Therefore, in this case we can choose an $\varepsilon >0$ such that $z' \in (z-\varepsilon, z)$ implies that $\varkappa_a(z') \ge k_0$ and $\delta_j(z)=\delta_j(z')$ for all $j\le k_0$. Hence $\overline{F}_a(z')> \overline{F}_a(z) $ for such $z'$, because $\delta_j(z)=0$ for all $j >k_0$ and there exists an integer $j_0(z')$ such that $k_0 < j_0(z') \le \varkappa_a(z')$ and $\delta_{j_0(z')}(z')=1$. Then $\overline{F}_a(z')> \overline{F}_a(z) $ for {\it all} $z'<z$.  

If $T_a^k(z) \neq \frac{2a-1}{1-a}$ for all $k \ge 0$ and $z \neq \frac{1}{1-a}$, then since $Q_a$ has measure zero, we can choose a strictly decreasing sequence $\{z_n\} \in [0, \frac{1}{1-a}] \setminus Q_a$ that converges to $z$. Denote $k_n=\varkappa_a(z_n)$ and $z_n' = \max (D_{k_n +1} \cap [0, z_n]) $. Then $z_n' \to z+$ as $n \to \infty$, and $z_n'>z$ for every $n$ because $z_n' \in D_{k_n +1}$ by Lemma~\ref{lem: dom T^n}.a  and $z \not \in D_{k_n +1}$ by the assumption. Therefore, $\overline{F}_a(z)> \overline{F}_a(z_n') $ for every $n$, as shown above. Hence   $\overline{F}_a(z)> \overline{F}_a(z') $ for all $z'>z$.  

Lastly, it is clear that $0=\overline{F}_a(\frac{1}{1-a}) < \overline{F}_a(z')$ for all $z'< \frac{1}{1-a}$. Thus, we showed that if $a \in (\frac12, \frac23)$, then $\overline{F}_a$ is constant on no open neighbourhood of any point in the set $Q_a \setminus H_a$, which therefore is the topological support of $\nu_a$. It has no isolated points since $\overline{F}_a$ is continuous.

It remains to consider the case $a=2/3$, where we shall prove that the support of $\nu_{2/3}$ is $[0,3]$. Let $J \subset[0,3]$ be an open interval. We need to show that $\nu_{2/3}(J)>0$. 

The invariant measure $\widehat \mu_{2/3}$ of the transformation $\widehat{T}_{2/3}$ is ergodic and is equivalent to the Lebesgue measure on $[0,1]$ by Corollary to Theorem 2 in~\cite{Hofbauer81}. Then the measure $\mu_{2/3}$, defined by $\mu_{2/3}(A)=\widehat \mu_{2/3}(A/3)$ for every Lebesgue measurable set $A \subset [0,3]$,  is invariant and ergodic for $T_{2/3}$. Therefore, almost all orbits $\{T_{2/3}^k(z):k\ge0\}$ are normal, and there exist a $z \in J$ and an $\varepsilon \in (0,1)$ such that the orbit of $z$ hits the interval $(1-\varepsilon, 1)$ and $(z, z+\varepsilon) \subset J$. Denote by $k_0 \ge 0$ the first hitting time. Since the mapping $T_a$ is piecewise continuous and satisfies $T_a'=1/a>1$ on the interior of its domain, it follows that there exists a $z' \in (z, z+\varepsilon)$ such that $T_{2/3}^{k_0}(z') =1$. Then $\delta_j(z)=\delta_j(z')$ for all $0 \le j \le k_0 -1$. Since $T_{2/3}(1-)=3$, it is easy to show, using representation \eqref{eq: F hits 1} and arguing as above, that  $\overline{F}_{2/3}(z)> \overline{F}_{2/3}(z') $. Therefore, $\overline{F}_{2/3}$ is not constant on $J$, and thus $\nu_{2/3}(J)>0$, as needed.

\subsection{Singularity properties of $\nu_{2/3}$}

In this part of the proof we assume throughout that $a=2/3$. To start, note that for the $\frac32$-transformation $\overline T(x)=\frac32 x \pmod 1$, we have 
\[
T_{2/3}(x)=3(1-\overline T(1-x/3)), \qquad x \in [0, 1) \cup (1, 3].
\]
In fact, this equality holds true for $x\in \{0,3\}$ and the functions on both sides of the equality are piecewise linear with the only discontinuity at $x=1$, where they have the same one-sided limits. Then it follows by induction that 
\[
T_{2/3}^k(x) = 3 (1 - \overline T^k(1-x/3)), \qquad x \in [0,3] \setminus H, k \ge 1,
\]
where $H=\{z:T_{2/3}^n(z)=1\text{ for some }n \ge 0\}$. Hence, for all $x \in [0,3] \setminus H$ and $k \ge 0$,
\begin{equation} \label{eq: digits 2/3}
\delta_k(x)=\ind\{T_{2/3}^{k}(x)<1\}=\left[\frac{3}{2}\bar{T}^k(1-x/3)\right].
\end{equation}
In particular, $\delta_0, \delta_1, \ldots $ are the digits in the $\frac32$-expansion of $1$, because the orbit of zero under $T_{2/3}$ does not include $1$, as shown in Section~\ref{sec:Operator}.

We know that $\lambda_{2/3}=3/4$ when $p=q=1/2$. Therefore, by \eqref{eq: expansion Parry},
\begin{align*}
\overline{F}_{2/3}(z)
=\sum_{k=0}^{\varkappa_{2/3}(z)} \delta_k(z) \Big ( \frac23 \Big)^{k+1}=1-\frac{z}{3},\quad z\in[0,3]\setminus H.
\end{align*}
Since the set $H$ of exceptional points is countable, the above means that
in the symmetric case $p=q$, the distribution $\nu_{2/3}$ is uniform  on $[0,3]$. 

Let us prove that $\nu_{2/3}$ is singular when $p \neq 1/2$. For any $x \in [0,3]$ and any real $\varepsilon \neq 0$ such that $x+ \varepsilon \in [0,3]$, denote 
\[
k_x(\varepsilon)= \min \{n \ge 0: \delta_n(x) \neq \delta_n(x+\varepsilon)\}.
\]  
We need the following result, which we will prove shortly afterwards. 

\begin{lemma} \label{lem: derivative}
For almost every $x \in (0,3)$, it is true that $k_x(\varepsilon) \sim \log_{2/3} |\varepsilon|$ as $\varepsilon \to 0$.
\end{lemma}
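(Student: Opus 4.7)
The plan is to reduce to a shrinking-target statement for the $\frac32$-transformation $\bar T(z)=\frac32 z \pmod 1$. By \eqref{eq: digits 2/3}, the disagreement of $\delta_k(x)$ and $\delta_k(x+\varepsilon)$ is equivalent to $\bar T^k(y)$ and $\bar T^k(y')$ lying on opposite sides of $2/3$, where $y=1-x/3$ and $y'=1-(x+\varepsilon)/3$. Denote by $I$ the closed interval between $y$ and $y'$, so $|I|=\delta:=|\varepsilon|/3$, and write $K:=\log_{3/2}(1/|\varepsilon|)=\log_{2/3}|\varepsilon|$. I will prove matching bounds $k_x(\varepsilon)\le K+O(1)$ (deterministic) and $k_x(\varepsilon)\ge(1-o(1))K$ (for almost every $x$).

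For the upper bound I will use the piecewise-expansion of $\bar T$. For every $0\le j<k_x(\varepsilon)$, both $\bar T^j(y)$ and $\bar T^j(y')$ lie in a common monotonicity cell of $\bar T$ (either $[0,2/3)$ or $[2/3,1)$), so $I$ sits in a single cell of the $j$-th refinement, $\bar T^j$ is affine with slope $(3/2)^j$ on $I$, and $\bar T^j(I)$ is an interval of length $(3/2)^j\delta$ contained in a cell of length at most $2/3$. This forces $(3/2)^j\delta\le 2/3$ and yields the claimed upper bound for \emph{every} $x\in(0,3)$ (no exceptional set).

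For the a.e.\ lower bound, the key observation is: if $|\bar T^j(y)-2/3|>(3/2)^j\delta$ for every $0\le j<k$, then no disagreement has occurred up to step $k$, because $\bar T^j(I)$ is an interval of length $(3/2)^j\delta$ containing $\bar T^j(y)$ and hence stays strictly on one side of $2/3$. To apply this I will invoke a Borel--Cantelli argument for $\bar T$: since $\bar T$ is a $\beta$-transformation preserving an absolutely continuous probability measure whose Parry density is bounded above and below by positive constants, one has $\mathrm{Leb}\{y:|\bar T^j(y)-2/3|<r\}\le Cr$ uniformly in $j$. Summing at $r=e^{-\alpha j}$ and applying Borel--Cantelli gives, for each $\alpha>0$, that a.e.\ $y$ satisfies $|\bar T^j(y)-2/3|\ge e^{-\alpha j}$ for all $j$ sufficiently large. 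Choosing $\alpha=\eta\log(3/2)$ and comparing $(3/2)^j e^{\alpha j}$ with $1/\delta$ yields $k_x(\varepsilon)\ge(1-C\eta)K$ for all $|\varepsilon|$ small (depending on $y$ and $\eta$); the finitely many small $j$ are absorbed using that $|\bar T^j(y)-2/3|>0$ for every $j$ outside a countable exceptional set and that $\delta\to 0$. Letting $\eta\downarrow 0$ combines the two bounds into $k_x(\varepsilon)/K\to 1$.

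The main obstacle is the dynamical Borel--Cantelli input for $\bar T$, which rests on the classical boundedness (above and below) of Parry's invariant density for the $\beta=\frac32$ transformation; once this is granted, the rest is a routine pairing of the deterministic upper bound with the almost-sure lower bound, together with the linear change of variables $y=1-x/3$ to transport the a.e.\ statement from $(0,1)$ to $(0,3)$.
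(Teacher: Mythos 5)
Your proof is correct and follows essentially the same route as the paper: a deterministic upper bound from the fact that the gap between the two orbits grows exactly like $(3/2)^j|\varepsilon|$ until the first digit disagreement, plus an almost-sure lower bound via a dynamical Borel--Cantelli argument keeping the orbit away from the critical point, which rests on boundedness of the Parry invariant density. The only immaterial differences are that you work in the conjugated coordinates of $\bar T$ with target point $2/3$ and exponentially shrinking neighbourhoods $e^{-\alpha j}$ followed by $\alpha\downarrow 0$, whereas the paper works directly with $T_{2/3}$, the point $1$, and radii $1/n^2$.
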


Combined with \eqref{eq: F tail}, this result implies that for every $\delta \in (0,1)$ and almost every $x \in (0,3)$, there exist  constants $\varepsilon_\delta(x)>0$ and $C_\delta(x)>0$ such that if $0<|\varepsilon| < \varepsilon_\delta(x)$, then 
\[
|\overline{F}_{2/3}(x)- \overline{F}_{2/3}(x+\varepsilon)| \le C_\delta(x) \left(\frac{p r^C}{\lambda_{2/3}} \right)^{(1-\delta) \log_{2/3} |\varepsilon|}, 
\]
where $r =\max(1, q/r)$ and the constant $C>0$ is as in Proposition~\ref{prop: p < 1/2}. If $p \neq 1/2$, then $p r^C/\lambda_{2/3} \in (2/3, 1)$ by \eqref{eq: > lambda >}. Therefore, by choosing $\delta$ to be small enough, we see that $|\overline{F}_{2/3}(x)- \overline{F}_{2/3}(x+\varepsilon)|= o(\varepsilon)$ for almost every $x$. Thus, $\overline{F}_{2/3}'(x)=0$ for such $x$, which implies that $1-\overline{F}_{2/3}$ is a singular distribution function. It remains to prove the lemma.

\begin{proof}
It is easy to show by induction that for every $x \in [0,3]$ and non-zero $\varepsilon \in [-x, 3-x]$, 
\begin{equation} \label{eq: D T^n(x)}
T_{2/3}^n(x) - T_{2/3}^n(x+\varepsilon) = \varepsilon(3/2)^n, \qquad 0 \le n \le k_x(\varepsilon).
\end{equation}
Therefore, since $|y - y'|>2$ implies that $\delta_0(y) \neq \delta_0(y')$ for any $y, y' \in [0,3]$, it follows that $|\varepsilon| (3/2)^{k_x(\varepsilon) - 1} \le 2$. Hence 
\begin{equation} \label{eq: k(eps) <}
k_x(\varepsilon) \le \log_{2/3} |\varepsilon/3|.
\end{equation}

Let us obtain a matching lower bound. Notice that the density of the invariant measure $\mu_{2/3}$ of $T_{2/3}$, which is $\widehat h_{2/3}(x/3)/3$, is bounded by \eqref{eq: Parry}. Combined with the Borel--Cantelli lemma, this implies that for $\mu_{2/3}$-almost every $x$, there exists an $n_0(x) \ge 1$ such that
\begin{equation} \label{eq: far from 1}
|T_{2/3}^n(x) - 1| \ge 1/n^2, \qquad n \ge n_0(x). 
\end{equation}
On the other hand, for every $x \in (0,3) \setminus H$, it follows from piece-wise continuity of $T_{2/3}$ (cf.~Lemma~\ref{lem: dom T^n}.b) that $k_x(\varepsilon) \to \infty $ as $\varepsilon \to 0$. Together with \eqref{eq: far from 1}, this implies that for $\mu_{2/3}$-almost every $x$, there exists an $\varepsilon_0(x) \in (0, 2/3)$ such that for every non-zero $\varepsilon \in (-\varepsilon_0(x) , \varepsilon_0(x) )$, the following implication is true:
\[
\text{if }|T_{2/3}^n(x) - T_{2/3}^n(x+\varepsilon)| < 1/n^2 \text{ for all } 0 \le n \le k, \ \text{ then } \ k_x(\varepsilon) > k.
\]
Combined with equality~\eqref{eq: D T^n(x)}, this implies by induction that the following is true:
\[
\text{if } |\varepsilon| (3/2)^k <1/k^2, \ \text{ then } \ k_x(\varepsilon) > k.
\]
Let us take $k=\log_{2/3} |\varepsilon| + 3 \log_{2/3} \log_{2/3} |\varepsilon|$. For $0<|\varepsilon|<2/3$, we have $k >\log_{2/3} |\varepsilon| > 1$ and 
\[
|\varepsilon| (3/2)^k = \frac{1}{\log_{2/3}^3 |\varepsilon| } < \frac{1}{\log_{2/3}^2 |\varepsilon| } < \frac{1}{k^2}.
\]
Therefore, we obtain that for $\mu_{2/3}$-almost every $x$, 
\[
k_x(\varepsilon) >\log_{2/3} |\varepsilon| + 3 \log_{2/3} \log_{2/3} |\varepsilon|, \qquad 0<|\varepsilon| < \varepsilon_0(x).
\]
Combined with \eqref{eq: k(eps) <}, this finishes the proof once we recall that $\mu_{2/3}$ is equivalent to the Lebesgue measure on $[0,3]$.
\end{proof}

\subsection{Rate of convergence on test functions in BV}

Recall that $\V[f]$ denotes the total variation of a function $f$. We clam the following.

\begin{proposition} \label{prop: rate}
Let $a \in (\frac12, \frac23]$ and $p \in (0,1)$. Then there exist constants $C_3>0$ and $\gamma_1 \in(0,1)$ such that for every function $f$ of bounded variation on $[0, 1/(1-a)]$, $x \in [0, 1/(1-a)]$, and $n \ge 1$, we have 
\begin{equation} \label{eq:Pnf rate gen}
\bigg |\E_x( f (X_n)| \tau >n) - \int_{[0,\frac{1}{1-a}]} f d \nu_a \bigg |  \le C_3 \gamma_1^n \V[f].
\end{equation}
\end{proposition}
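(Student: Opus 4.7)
The plan is to reduce \eqref{eq:Pnf rate gen} to the uniform rate~\eqref{eq:Pnf conv rate} already established for the indicators $f_z(x) = \ind\{x > z\}$, by representing a general BV function as an integral of such indicators with respect to its Lebesgue--Stieltjes measure. Without loss of generality assume $f(0) = 0$, since adding a constant to $f$ leaves both sides of \eqref{eq:Pnf rate gen} and $\V[f]$ unchanged. Taking $f$ to be right-continuous and denoting by $\sigma_f$ its signed Lebesgue--Stieltjes measure on $[0, L]$ with $L := 1/(1-a)$, we have $|\sigma_f|([0, L]) = \V[f]$ and $f(x) = \int_{[0, L]} f_z(x) \, d\sigma_f(z)$, where the minor discrepancies at atoms are harmless for what follows because $\nu_a$ is atomless.

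By Fubini, $P^n f(x) = \int P^n f_z(x) \, d\sigma_f(z)$ and $\int f \, d\nu_a = \int \overline{F}_a(z) \, d\sigma_f(z)$, the latter because $\overline{F}_a(z) = \nu_a((z, L])$. Integrating \eqref{eq:Pnf conv rate} against $|\sigma_f|$ yields
\[
\Big|\lambda_a^{-n} P^n f(x) - c\, V(x) \int f \, d\nu_a\Big| \le C_2 \gamma_1^n \V[f],
\]
and the same estimate applied to $f \equiv \ind$ gives $|\lambda_a^{-n} P^n \ind(x) - c V(x)| \le C_2 \gamma_1^n$.

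Now write $\E_x(f(X_n) \mid \tau > n) - \int f \, d\nu_a = (A - I B)/B$ with $A = \lambda_a^{-n} P^n f(x)$, $B = \lambda_a^{-n} P^n \ind(x)$, and $I = \int f \, d\nu_a$. Splitting $A - I B = (A - c V(x) I) - I(B - c V(x))$ and bounding each term by the two displays above (using $|I| \le \sup|f| \le \V[f]$, which is valid because $f(0) = 0$), the numerator is at most $2 C_2 \gamma_1^n \V[f]$. Since $V(x) \ge 1$ (the $k = 0$ summand in \eqref{eq: V} equals $1$), we have $B \ge c/2$ once $n \ge N_0(a, p)$; hence \eqref{eq:Pnf rate gen} holds for $n \ge N_0$, and for $n < N_0$ the trivial bound $|\E_x(f(X_n) \mid \tau > n) - \int f \, d\nu_a| \le 2 \V[f]$ handles the remaining finitely many cases after enlarging $C$. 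The main work, namely the uniform-in-$z$ control of $P^n f_z(x)$, was already done in~\eqref{eq:Pnf conv rate}; the present argument is the routine lift from indicators to BV test functions, and the only place one must be slightly careful is to organize the estimates so that the bound involves $\V[f]$ alone, not $\|f\|$.
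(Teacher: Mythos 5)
Your route differs from the paper's: the paper proves Proposition~\ref{prop: rate} by running the decomposition $P=P_1+P_2$ directly on all of $BV$ and invoking the operator-norm bound of Proposition~\ref{prop: P_2^n norm} for $\|P_2^n\|$, whereas you integrate the already-established indicator estimate~\eqref{eq:Pnf conv rate} against the Lebesgue--Stieltjes measure $\sigma_f$ and then divide by $P^n\ind$. That skeleton is viable and arguably more economical (it recycles \eqref{eq:Pnf conv rate} instead of redoing the spectral argument on $BV$), and the normalisation step is fine: $V\ge1$, $\bigl|\lambda_a^{-n}P^n\ind(x)-cV(x)\bigr|\le C_1\gamma^n$ by \eqref{eq:PF-bound} with $\ind\in U$, so the denominator is bounded below by $c/2$ for $n\ge N_0$ uniformly in $x$, and small $n$ are absorbed into the constant.

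There is, however, a genuine gap at the discontinuities of $f$. The identity $f(x)=\int f_z(x)\,d\sigma_f(z)$ with $f_z(x)=\ind\{x>z\}$ returns $f(x-)$, not $f(x)$, at every atom of $\sigma_f$, and your justification that this is ``harmless because $\nu_a$ is atomless'' only repairs the limit term $\int f\,d\nu_a$. It does nothing for the finite-$n$ term: the conditional law of $X_n$ given $\{\tau>n\}$ is purely atomic (supported on finitely many points), so changing $f$ on the countable set of its jump points -- which is also what your ``w.l.o.g.\ $f$ right-continuous'' step does -- can a priori change $\E_x(f(X_n)\mid\tau>n)$ by an amount of order $\V[f]$, with no decay in $n$. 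To close this you must control individual point masses relative to $\pr_x(\tau>n)$: by the reversed-time determinism of the killed chain, $\pr_x(X_n=y,\tau>n)$ is (essentially, with the obvious modification at $a=\tfrac23$) either $0$ or $p^{\,n-L_n(y)}q^{\,L_n(y)}$ with $\varkappa_a(y)\ge n$, and then $\lambda_a>p$ for $p\ge\tfrac12$, respectively \eqref{eq: L_k - L_m <} together with \eqref{eq: > lambda >} for $p<\tfrac12$, give $\sup_y\pr_x(X_n=y,\tau>n)\le \mathrm{const}\,(\gamma_1\lambda_a)^n$, so the jump contribution is $O(\gamma_1^n\V[f])$ after dividing by $\pr_x(\tau>n)$. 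This is exactly the estimate the paper itself uses when passing from left-continuous to general $BV$ functions in its indicator-based variant of the argument. With that paragraph added your proof is complete; as written it only covers $f$ without jumps (or left-continuous $f$, for which the Stieltjes representation is exact).
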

\begin{proof}
We extend the argument we gave above in Section~\ref{ssec:cond} for $f_z$ to an arbitrary function $f$ of bounded variation. To this end, we estimate the spectral radius of the operator $P_2$. First,
\begin{align*}
\|P_2f\|
&\le p\|f(ax+1)-f(1)\|
+q \big \| {\ind}\{x\ge 1/a\}(f(ax-1)-f(0)) \big \|\\
&=p\V[ {f|}_{[1, 1/(1-a)]}]
+q\V [{f|}_{[0,(2a-1)/(1-a)]}]\\
&\le \max\{p,q\}\|f\|,
\end{align*}
where the last inequality is trivial when $a \in (\frac12, \frac23)$, while in the case $a=\frac23$ we used that
\begin{equation*} 
\V [{f|}_{[0,1]}]+ \V [ {f|}_{[1, 3]}] = \V [{f|}_{[0,1)}]+ |f(1)-f(1-)| + \V [ {f|}_{[1, 3]}] = \V[f].
\end{equation*}
Therefore, $\|P_2^n\|\le p^n$ if $p\ge 1/2$, otherwise we need a more delicate estimate below. 
\begin{proposition} \label{prop: P_2^n norm}
Let $a \in (\frac12, \frac23]$, $p \in (0,1)$, and $n \in \N$. Then
\begin{equation} \label{eq: P_2^n norm}
\|P_2^n \| \le p^n \sup_{\substack{x \in [0, 1/(1-a)]: \\ \varkappa_a(x) \ge n}}(q/p)^{L_n(x)}.
\end{equation}
\end{proposition}
We postpone the proof of this estimate and first finish the proof of Proposition~\ref{prop: rate}; note that the right-hand side of \eqref{eq: P_2^n norm} equals $p^n$ when $p \ge 1/2$. 

\begin{remark} \label{rem: compact BV}
Proposition~\ref{prop: P_2^n norm} combined with the estimates of Proposition~\ref{prop: p < 1/2} imply that the spectral radius of $P_2$ on $BV$ satisfies $\rho(P_2) < \lambda_a$. This implies that the operator $P$ is quasi-compact on $BV$ for all $a \in (\frac12, \frac23]$, since $\rho(P) \ge \lambda_a$ and the operator $P_1=P-P_2$ is compact because its range is two-dimensional; cf.~Remark~\ref{rem:compact}. 
\end{remark}

Since \eqref{eq:Pnf rate gen} clearly holds true for constant functions, we can assume w.l.o.g.\ that $f(0)=0$, and thus $\|f\|=\V[f]$. 
Repeating the argument from Section~\ref{ssec:cond} and using Proposition~\ref{prop: P_2^n norm} combined with the bound  $|(M(P_1 P_2^j f), v^*)| \le \| P_2^j f \| {\|v^*\|}_\infty$ instead of \eqref{eq: P_2^j step} and \eqref{eq: P_1 P_2^j step}, we obtain the following counterpart to \eqref{eq:Pnf conv rate}:
\begin{equation}
\label{eq:Pnf conv rate gen}
\left |\lambda_a^{-n}  P^n f(x) - c \Big (\sum_{j=0}^\infty\lambda_a^{-j-1} M(P_1P_2^j f ),v^* \Big) V(x) \right| \le C_4 \gamma_1^n \V [f],
\end{equation}
where $C_4=C_2 {\|v^*\|}_\infty/p$ and the constants $C_2>0$ and $\gamma_1 \in (0,1)$ are as in  \eqref{eq:Pnf conv rate}.

The factor $(\cdot, \cdot)$ in \eqref{eq:Pnf conv rate gen} equals $\int_{[0,\frac{1}{1-a}]} f d \nu_a$, which we denote by $\nu_a(f)$. This follows from the weak convergence in \eqref{eq: weak lim} combined with the continuous mapping theorem, which applies because the limiting distribution $\nu_a$ has no atoms and every function of bounded variation has at most countable number of discontinuities. We have $|\nu_a(f)| \le {\|f\|}_\infty \le \V[f]$ by \eqref{eq: norms}, and it is easy to obtain from \eqref{eq:PF} and \eqref{eq:Pnf conv rate gen} that
\begin{align*}
\Big |\E_x( f (X_n)| \tau >n) - \nu_a(f) \Big | &= \Big | \frac{P^n f(x)}{P^n \ind (x)} - \nu_a(f) \Big |  \le (C_1+C_4)C_5^{-1} \gamma_1^n \V [f]
\end{align*}
for every  $x,z \in[0,1/(1-a)]$ and $n \ge 1$, where $C_5=\min \{ \lambda_a^{-n} P^n \ind (0): n \ge 1\}$ is strictly positive by \eqref{eq:PF} and the fact that $V(0)=1$. This proves~\eqref{eq:Pnf rate gen} with $C_3=(C_1+C_4)C_5^{-1}$.
\end{proof}
It remains to prove Proposition~\ref{prop: P_2^n norm}. To this end we need the following auxiliary result. 
\begin{lemma} \label{lem: P_2^n =}
Assume that $a \in (\frac12, \frac23]$ and $n \in \N$. Then for every $x \in [0, \tfrac{1}{1-a}] \setminus \{3\}$, we have
\begin{equation} \label{eq: P_2^n}
P_2^n f(x)= \sum_{y \in T_a^{-n}(x)} [f(y)-f(g_n(y))] p^n (q/p)^{L_n(y)}.
\end{equation}
\end{lemma}

\begin{proof}
We first show by induction that for every $x \in [0, \tfrac{1}{1-a}] \setminus \{3\}$,
\begin{equation} \label{eq: P^n}
P^n f(x)= \sum_{y \in T_a^{-n}(x)}  f(y) p^n (q/p)^{L_n(y)}.
\end{equation}
In the basis case $n=1$, this holds true by \eqref{eq: weightedPF}. The step of induction is justified by
\begin{align*} 
P^{n+1} f(x) &= \sum_{y \in T_a^{-1}(x)}  p (q/p)^{\delta_0(y)} P^n f(y)  \\ 
&= \sum_{y \in T_a^{-1}(x)} p (q/p)^{\delta_0(y)} \sum_{z \in T_a^{-n}(y)} f(z) p^n (q/p)^{L_n(z)}  \\
&= \sum_{y \in T_a^{-1}(x)} \sum_{z \in T_a^{-n}(y)} f(z) p^{n+1} (q/p)^{L_n(z) + \delta_0(T_a^n(z))}  \\
&= \sum_{z \in T_a^{-(n+1)}(x)} f(z) p^{n+1} (q/p)^{L_{n+1}(z)},
\end{align*}
where in the second equality we applied the assumption of induction using that $3 \not \in T_a^{-1}(x)$. 

Second, we claim that for every $x \in [0, \tfrac{1}{1-a}] \setminus \{3\}$, 
\begin{equation} \label{eq: P_2^n 2}
\sum_{y \in T_a^{-n}(x)} f(g_n(y)) p^n (q/p)^{L_n(y)} = \sum_{v \in G_n} \ind \{x \ge T_a^n(v)\} f(v) p^n (q/p)^{L_n(v)}.
\end{equation}
Indeed, in the case when $a < \frac23$ we have 
\begin{equation} \label{eq: P_2^n 3}
\sum_{y \in T_a^{-n}(x)} f(g_n(y)) p^n (q/p)^{L_n(y)} = \sum_{u \in D_n} \ind \{x \in T_a^n([g_n(u), u])\} f(g_n(u)) p^n (q/p)^{L_n(g_n(u))}
\end{equation}
since by Lemma~\ref{lem: dom T^n}.a, $L_n(y)$ is constant on each of the intervals $[g_n(u), u]$, whose disjoint union constitutes the domain of $T^n_a$. Then equality \eqref{eq: P_2^n 2} follows for $a< \frac23$ since $T_a^n(x)$ is increasing on $[g_n(u), u]$ and $T_a^n(u)= \frac{1}{1-a} \ge x$ for $u \in D_n$. In the case when $a=\frac23$ and $x \neq 3$, equality~\eqref{eq: P_2^n 3} remains valid if on its right-hand side for every $u \in D_n$ we replace $g_n(u)$ by $g_n(u-)$ and $[g_n(u), u]$ by $[g_n(u-), u)$. Then \eqref{eq: P_2^n 2} follows for $a=\frac23$ from this version of \eqref{eq: P_2^n 3} and the fact that  $T_{2/3}^n(u-)= 3>x$.

Furthermore, it follows from \eqref{eq: P^n} and \eqref{eq: P_2^n 2} that \eqref{eq: P_2^n} is equivalent to 
\begin{equation} \label{eq: P_2^n v2}
P^n f(x) - P_2^n f(x)=  \sum_{y \in G_n} \ind \{x \ge T_a^n(y)\}  f(y) p^n (q/p)^{L_n(y)}.
\end{equation}
We prove this equality by induction. In the basis case $n=1$ it holds true by the definition of $P_2$. Assuming that equality~\eqref{eq: P_2^n v2} is satisfied for a concrete $n$, we get
\begin{align} \label{eq: P_2^{n+1}}
P^{n+1} f(x) - P_2^{n+1} f(x)&= P_2 (P^n f - P_2^n f)(x) +  P_1 P^n f(x)\notag\\
&= P_2 \sum_{y \in G_n}  \ind \{x \ge T_a^n(y)\}  f(y) p^n (q/p)^{L_n(y)} \\
& \quad + p P^n f(1) + q P^nf(0) \ind \{x \ge T_a(0)\}.  \notag
\end{align}

For every $z \in [1, 1/(1-a)]$, denote $f_z(x)= \ind \{x \ge z\}$. Then by the definition of $P_2$,
\[
P_2 f_z(x) =p \big (\ind\{ax+1 \ge z \})-\ind\{1 \ge z \} \big)+q{\ind}\{x\ge 1/a\} \big(\ind\{ax-1 \ge z \})-\ind\{0 \ge z \}\big).
\]
It is easy to check (considering five cases) that $P_2 f_z(x)\equiv 0$ if $z \in I_a \cup \{0,1\}$, otherwise $P_2  f_z(x)=c_0(z)\ind\{x \ge T_a(z)\}$ since $x \neq 3$. Hence by \eqref{eq: P^n} and \eqref{eq: P_2^{n+1}}, 
\begin{align*} 
P^{n+1} f(x) - P_2^{n+1} f(x) &=   \sum_{\substack{y \in G_n: \\  T_a^n(y) \not \in I_a \cup \{0,1\}}}  \ind \{x \ge T_a^{n+1}(y)\}  f(y) p^n (q/p)^{L_n(y)} c_0(T_a^n(y)) \\
&\quad + \sum_{y \in T_a^{-n}(1)}  f(y) p^{n+1} (q/p)^{L_n(y)} \\
& \quad + \sum_{y \in T_a^{-n}(0)}  \ind \{x \ge T_a(0)\} f(y) q p^n (q/p)^{L_n(y)}, 
\end{align*}
and since $(q/p)^{L_n(y)} c_0(T_a^n(y))= p (q/p)^{L_{n+1}(y)}$ for every $y$ in the domain of $T_a^n$, we get
\begin{align*} 
P^{n+1} f(x) - P_2^{n+1} f(x) &=   \sum_{\substack{y \in G_n: \\  T_a^n(y) \not \in I_a \cup \{0,1\}}} \ind \{x \ge T_a^{n+1}(y)\}  f(y) p^{n+1} (q/p)^{L_{n+1}(y)} \\
&\quad  + \sum_{y \in T_a^{-n}(1)} \ind \{x \ge T_a^{n+1}(y)\} f(y) p^{n+1} (q/p)^{L_{n+1}(y)} \\
&\quad  + \sum_{y \in T_a^{-n}(0)} \ind \{x \ge T_a^{n+1}(y)\}  f(y) p^{n+1} (q/p)^{L_{n+1}(y)} \\
&= \sum_{y \in G_{n+1}} \ind \{x \ge T_a^{n+1}(y)\}  f(y) p^{n+1} (q/p)^{L_{n+1}(y)} 
\end{align*}
using that $G_{n+1}$ is a union of three disjoint sets 
\[
(G_n \cap \{z: \varkappa_a(z) >n\}) \setminus (T^{-n}(0) \cup T^{-n}(1)), \ T^{-n}(0), \text{ and }T^{-n}(1).
\]
This finishes the proof of equality~\eqref{eq: P_2^n v2}, which is equivalent to \eqref{eq: P_2^n}.
\end{proof}

\begin{proof}[Proof of Proposition~\ref{prop: P_2^n norm}]
Assume that $a \in (\frac12, \frac23)$. By Lemma~\ref{lem: dom T^n}.a and equality~\eqref{eq: P_2^n}, we have 
\begin{align*} 
P_2^n f(x)&= \sum_{y \in T_a^{-n}(x)} [f(y)-f(g_n(y))] p^n (q/p)^{L_n(y)} \\
&= \sum_{u \in D_n} \ind \{x \in T_a^n([g_n(u), u])\} \Big[f \big(({T_a^n |}_{[g_n(u), u]})^{-1}(x) \big) - f(g_n(u)) \Big] p^n (q/p)^{L_n(g_n(u))}
\end{align*}
because $L_n(y)$ is constant and $T_a^n(y)$ is bijective on each of the intervals $[g_n(u), u]$. Since $P_2^n f(0)=0$ and each of the functions under the second sum vanishes at $x=0$, 
\begin{align*} 
\| P_2^n f \| & \le \sum_{u \in D_n} \V \! \left[ \ind \{x \in T_a^n([g_n(u), u])\} \Big( f \big(({T_a^n |}_{[g_n(u), u]})^{-1}(x) \big) - f(g_n(u)) \Big) \right] \\
&\qquad \times  p^n (q/p)^{L_n(g_n(u))} \\
&= \sum_{u \in D_n} p^n (q/p)^{L_n(g_n(u))} \V \! \big[ {f|}_{[g_n(u), u]} \big] \\
&\le p^n \sup_{v \in G_n} (q/p)^{L_n(v)} \|f\|, 
\end{align*}
where the first equality holds true because $T_a^n$ is continuous and strictly increasing on each interval $[g_n(u), u]$. By Lemma~\ref{lem: dom T^n}.a, this yields estimate \eqref{eq: P_2^n norm} for $a<\frac23$.

We now assume that $a=\frac23$. Arguing as above gives 
\begin{equation} \label{eq: incomplete Var}
\V \! \big [ {(P_2^n f)|}_{[0,3)} \big ] \le \sum_{u \in D_n} p^n (q/p)^{L_n(g_n(u-))} \V \! \big[ {f|}_{[g_n(u-), u)} \big], 
\end{equation} 
as Lemma~\ref{lem: P_2^n =} does not cover the case $x=3$. 

Furthermore, it follows from \eqref{eq: P_2^n} that
\[
P_2^n f(1) -P_2^n f(1-) =\sum_{y \in T_{2/3}^{-n}(1)} [f(y)-f(y-)] p^n (q/p)^{L_n(y)}
\]
since by Lemma~\ref{lem: dom T^n}.b, the functions $g_n(x)$ and $L_n(x)$ under the sum in \eqref{eq: P_2^n} are constant in a small neighbourhood of every point in the set $T_{2/3}^{-n}(1)$, which does not meet $D_n$. Then by the definition of~$P_2$,
\begin{align*}
&\phantom{=} P_2^{n+1} f(3) -P_2^{n+1} f(3-) \\
&= p \big(P_2^n f(3) - P_2^n f(3-) \big) + q \big(P_2^n f(1) - P_2^n f(1-) \big) \\
&= p \big(P_2^n f(3) - P_2^n f(3-) \big) + \sum_{y \in T_{2/3}^{-n}(1)} [f(y)-f(y-)] q p^n (q/p)^{L_n(y)}, 
\end{align*}
and since $L_{n+1}(y-) = L_n(y) + 1$ for every $y \in T_{2/3}^{-k}(1)$, it follows that
\[
P_2^n f(3) -P_2^n f(3-) = p^n \big( f(3) - f(3-) \big)  + \sum_{k=0}^{n-1} \sum_{y \in T_{2/3}^{-k}(1)} [f(y)-f(y-)] p^n (q/p)^{L_{k+1}(y-)}.
\]
Hence, using in the case when $q >p$ that $L_k(x)$ is non-decreasing in $k$ for all fixed $x \in [0,3]$ and in the case when $q \le p$ that $L_n(3-)=0$, we obtain
\[
|P_2^n f(3) -P_2^n f(3-)| \le p^n \sup_{y \in D_n} (q/p)^{L_n(y-)} \sum_{y \in D_n} |f(y)-f(y-)|.
\]
Combined with \eqref{eq: incomplete Var}, this implies estimate \eqref{eq: P_2^n norm} for $a=\frac23$.
\end{proof}

\section{Large starting points}
In this section we prove the last remaining statements, Corollary~\ref{cor:large-x} and Proposition~\ref{prop: a < 1/2}.  The main step is to consider the case where the starting point $X_0=x$ of the chain $\{X_n\}$ is outside of the absorbing interval $[0, \frac{1}{1-a}]$. For such $x$, the chain strictly decreases until the stopping time
\[
\sigma=\inf \{n \ge 0: X_n \le 1/(1-a) \}.
\]

\begin{proof}[Proof of Corollary~\ref{cor:large-x}]
In view of Theorem~\ref{thm: main}, we only need to consider $x > \frac{1}{1-a}$. Define the stopping time
\[
\sigma'''=\inf \{n \ge 0: X_n \le (2-a)/(a(1-a)) \}.
\]
It is upper-bounded a.s.\ by a deterministic constant because $X_n < x a^n + \frac{1}{1-a}$ for all $n$. Since $\sigma=\inf\{n > \sigma''': \xi_n=-1\}$, this implies that for some constant $C(x)=C_{a,p}(x)>0$,
\begin{equation} \label{eq: sigma pmf}
\pr_x(\sigma=n)\le C(x)p^n, \qquad n \ge 1.
\end{equation}

For any $y \in [0, \frac{1}{1-a}]$, by conditioning on $\sigma$ and $X_\sigma$ and using the Markov property of the chain $\{ X_n\}$, we get
\begin{equation} \label{eq: strong Markov}
\pr_x(X_n \le y, \tau >n) = \sum_{k=1}^n \int_{[0, \frac{1}{1-a}]} \pr_z(X_{n-k} \le y, \tau >n-k) \pr_x(X_\sigma \in dz, \sigma =k)\\
\end{equation}
Notice that for the integrand, by \eqref{eq: main equiv} and \eqref{eq: weak lim} we have 
\begin{equation} \label{eq: integrand ->}
\lim_{n \to \infty} \lambda_a^{-n} \pr_z(X_{n-k} \le y, \tau >n-k) = c \lambda_a^{-k} \nu_a([0,y]) V(z) 
\end{equation}
for every fixed $k \ge 1$ and $z \in [0, \frac{1}{1-a}]$, and we also have the bound
\begin{equation*} 
\lambda_a^{-n} \pr_z(X_{{(n-k)}_+} \le y, \tau >n-k) \ind (k \le n ) \le \lambda_a^{-n} \pr_{\frac{1}{1-a}}(\tau >n-k) \le C' \lambda_a^{-k}
\end{equation*}
for some constant $C'>0$ and every integer $k, n \ge 1$ and $z \in [0, \frac{1}{1-a}]$. Since $\E_x \lambda_a^{-\sigma}<\infty$ by~\eqref{eq: sigma pmf}, by the dominated convergence theorem it follows from \eqref{eq: strong Markov} and \eqref{eq: integrand ->}  that 
\begin{align*}
\lim_{n \to \infty} \lambda_a^{-n} \pr_x(X_n \le y, \tau >n) &= \sum_{k=1}^\infty \int_{[0, \frac{1}{1-a}]} c \lambda_a^{-k} \nu_a([0,y]) V(z) \pr_x(X_\sigma \in dz, \sigma =k) \\
&= c \nu_a([0,y]) \E_x [\lambda_a^{-\sigma} V(X_\sigma)]
\end{align*}
for every fixed $x$ and $ y$. Taking $y=\frac{1}{1-a}$ gives the first claim of Corollary~\ref{cor:large-x}, and this in turn implies that  \eqref{eq: weak lim} is valid for every $x \ge 0$.
\end{proof}

\begin{proof}[Proof of Proposition~\ref{prop: a < 1/2}] 
Assume throughout that $X_0=x \ge 0$, and recall that
\[
\sigma'=\inf \{n \ge 0: X_n <1/a \}, \qquad \sigma''=\inf \{n \ge 0: X_n < 6 \}.
\]
We already used in the introduction that for any starting point $x \in [0, \frac{1}{a})$ it is true that $\tau=\inf\{n\ge1:\xi_n=-1\}$. For any $x \ge \frac1a$, we note that $\sigma'$ is upper-bounded by a deterministic constant when $a <\frac12$ because $X_n < x a^n + \frac{1}{1-a}$ and in this case $\frac{1}{1-a} < \frac1a$. Therefore, for every $x \ge 0$ when $a<\frac12$ and for every $0 \le x <2$ when $a=\frac12$ (call these two options Case 1), for all $n$ large enough  we get
\[
\pr_x( \tau >n) = \E_x p^{-\sigma'} \cdot p^n
\] 
by conditioning on $\sigma'$ and $X_{\sigma'}$. This proves \eqref{eq: main triv}.

For $a =\frac12$ and $x \ge 2$ (call this Case 2), the random variable $\sigma'$ is not bounded and it is easy to see that $\E_x p^{-\sigma'} = \infty$. However, we can write $\sigma' = \inf\{ n > \sigma'': \xi_n=-1\}$, where $\sigma'' \le C(x)$ for some deterministic integer constant $C(x)=C_a(x)$ by the same reasoning as above in Case 1. We also have $\tau = \inf\{ n > \sigma': \xi_n=-1\}$. Now use that $\tau = (\tau - \sigma') + (\sigma' - \sigma'')+ \sigma''$, where the three terms on the right-hand side are independent random variables and the first two of them are geometric with parameter $q$. Conditioning on $\sigma''$ and $X_{\sigma''}$, we get
\begin{equation*} 
\pr_x(\tau >n ) = \sum_{k=0}^{C(x)} \pr_x(\sigma''=k) \pr_x(\tau -  \sigma''>n-k) =  \sum_{k=0}^{C(x)} \pr_x(\sigma''=k) (q (n-k)+p)p^{n-k-1}.
\end{equation*}
Hence
\[
\pr_x(\tau >n ) \sim q \E_x p^{-\sigma''} \cdot n p^{n-1}
\]
as $n \to \infty$, establishing \eqref{eq: main equiv n}.

To prove the conditional weak convergence of $X_n$, 
notice that we always have $X_{\sigma'} \ge 0$. On the event $\{\sigma' \le n, \tau > n\}$, we have $X_n = a^{n-\sigma'} X_{\sigma'} + (1-a^{n-\sigma'})/(1-a)$. Therefore, for a fixed $y <1/(1-a)$ there exists an $M>0$ large enough such that 
\[
\pr_x(X_n \le y,  \tau > n) = \pr_x (X_n \le y, n-M \le \sigma' \le n, \tau > n ) \le \pr_x (\sigma' \ge n-M ).
\]
In Case 1, $\sigma'$ is bounded, hence $\pr_x (\sigma' \ge n-M )=0$ for all $n$ large enough. In Case 2, we have
\[
\pr_x (\sigma' \ge n-M ) \le \pr_x (\sigma'- \sigma'' \ge n-M - C(x)) = o(\pr_x(\tau>n))
\]
as $n \to \infty$ since $\sigma'- \sigma''$ is geometric. Thus, in either case $\pr_x(X_n \ge y|  \tau > n) \to 1$. This implies that $\pr _x(X_n \in \cdot \, | \tau >n)$ converges weakly to $\delta_{1/(1-a)}$ since we always have $X_n < x a^n + \frac{1}{1-a}$.

The $\delta$-measure at $\frac{1}{1-a}$ is quasi-stationary in the sense of \eqref{eq: quasi-st} for $a<\frac12$ since 
\[
\pr_{\frac{1}{1-a}} (X_1 \in \cdot \, | \tau>1) = p^{-1} \pr \Big ( \frac{a}{1-a} + \xi_1 \in \cdot \,, \xi_1=1 \Big) = \delta_{\frac{1}{1-a}}.
\]

It remains to argue that there is no quasi-stationary probability measure when $a=\frac12$. Suppose that $\nu$ is such a probability. Put $b=\nu(\{\frac{a}{1-a}\})$. If $b=0$, then for any $y \in [1, \frac{1}{1-a}]$, 
\[
\nu([0,y))= \pr_\nu(X_1 <y | \tau >1) = p^{-1} \pr_\nu(a X_0 + \xi_1 <y, \xi_1 =1) = \nu ([0,(y-1)/a)).
\]
Hence it follows by induction that $\nu([0,\sum_{k=0}^n a^k))=0$ for every integer $n \ge 0$. Therefore, $\nu=0$, which is a contradiction. If $b>0$, we arrive at a contradiction by
\[
b=\pr_\nu \Big ( X_1= \frac{a}{1-a} \Big | \Big . \tau >1 \Big) = \frac{bp}{\pr_\nu(\tau>1)} = \frac{bp}{p(1-b)+b} <b.
\]
\end{proof}

\section*{Acknowledgements}
We thank Denis Denisov for discussions on the problem. 



\end{document}